\newtheorem{theorem}{Theorem}[section]
\newtheorem{prop}[theorem]{Proposition}
\theoremstyle{definition}
\newtheorem{remark}[theorem]{Remark}
\numberwithin{equation}{section}
\def\multi#1{\vbox{\baselineskip=0pt\halign{\hfil$\scriptstyle\vphantom{(_)}##$\hfil\cr#1\crcr}}}
\def \CAP{{\mathcal PF}}
\def\tttt #1{{\textstyle{#1} }}
\def \mur {\leftarrow}
\def \sch {{\cup\!\cup}}
\def \La {{\Lambda}}
\def \page{{\vfill\supereject}}
\def\la{{\lambda}}
\def \CD {{\mathcal D}}
\def \con {\subseteq}
\def\sig{\sigma}
\def\PI{\Pi}
\def\LL{\big\langle}
\def\RR {\big\rangle}
\def\DD {\Delta}
\def\OM {\Omega}
\def\om {\omega}
\def\la {\lambda}
\def \RA {\rightarrow}
\def \LA {\lefttarrow}
\def \sas {\vskip .06truein}
\def\sa{{\vskip .125truein}}
\def\sapp {{\vskip .5truein}}
\def \eee {\epsilon}
\def\aaa {\alpha}
\def\bbb {\beta}
\def\aa {\alpha}
\def\bb {\beta}
\def\con {\subseteq}
\def \ses {\enskip = \enskip}
\def \sps {\, + \,}
\def \sms {\, - \,}
\def \scs {\, , \,}
\def \ess {\enskip}
\def \ssp {\hskip .25em}
\def \bigsp {\hskip .5truein}
\def \part {\vdash}
\def \DD {\Delta}
\def \OM {{\Omega}}
\def \RA {{ \rightarrow }}
\def \LA {{ \leftarrow }}
\def \om {\omega}
\def \TK {{\tilde K}}
\def \TH {{\tilde H}}
\def \om {\omega}
\def \TK {{\tilde K}}
\def \TH {{\tilde H}}
\def \scs {\ssp , \ssp}
\def \ess {\enskip}
\def \ssp {\hskip .25em}
\def \bigsp {\hskip .5truein}
\def \part {\vdash}
\def \EE {{\mathcal E}}
\def \BB {{\bf B}}
\def \BC {{\bf C}}
\def \CPF {{\mathcal PF}_n}
\def \TS {\textstyle}
\title{A new ``dinv'' arising from the two part case of the Shuffle Conjecture}
\author{A. Duane, A. M. Garsia and M. Zabrocki}
\begin{document}

\begin{abstract}
In a recent paper \cite{Hag} J. Haglund showed that the expression
$\LL \Delta_{h_j} E_{n,k} , e_n\RR$ with $\Delta_{h_j}$ the Macdonald
eigen-operator
$
  \DD_{h_j} \TH_\mu = h_j[B_\mu] \TH_\mu
$ 
enumerates by $t^{ area} q^{dinv} $ the parking functions 
whose diagonal word is in  the shuffle $12\cdots j \sch j+1\cdots j+n$ 
with $k$ of the cars \hbox{$j+1,\ldots ,j+n$} in the
main diagonal including  car $j+n$ in the cell $(1,1)$. In view of some recent
conjectures of Haglund-Morse-Zabrocki \cite{HMZ} it is natural to conjecture  that 
replacing $E_{n,k}$ by the modified Hall-Littlewood funtions
$\BC_{p_1}\BC_{p_2}\cdots \BC_{p_k}\, 1$ 
would yield a polynomial that enumerates the same collection of parking functions but now restricted by the requirement   that 
the Dyck path supporting cars $j+1,\ldots ,j+n$ hits the diagonal 
according to the composition $p=(p_1,p_2,\ldots ,p_k)$. We prove here this conjecture by deriving a recursion for the polynomial 
$\LL \Delta_{h_j} \BC_{p_1}\BC_{p_2}\cdots \BC_{p_k}\, 1 \scs e_n\RR$ then
using this recursion to construct a   new dinv  statistic we will denote $ndinv$
and show that this polynomial enumerates the latter
parking functions by  $ t^{ area} q^{ndinv}$

\end{abstract}

\maketitle

\begin{section}{Introduction}

Parking functions are endowed by a colorful history and   jargon (see for instance \cite{Hagbook})
that is very helpful in dealing with them combinatorially as well as analytically.
Here we will represent them interchangeably  as two line arrays or as 
tableaux. A single example of this correspondence should  be sufficient for our purposes. In the figure below we have on the left the  
two line array, with the list of cars $V=(v_1,v_2,\ldots ,v_n)$ 
on top and their  diagonal numbers  $U=(u_1,u_2,\ldots ,u_n)$
on the bottom. In the corresponding $n\times n$ tableau of lattice cells
we have shaded the {\it main diagonal } (or $0$-diagonal) and 
drawn the {\it supporting}
Dyck path. The component  $u_i$ gives the number of lattice cells EAST of the $i^{th}$
NORTH step  and WEST of the main diagonal. 
The cells adjacent to the NORTH
steps of the path are filled with the corresponding cars  from bottom to top. 
\vskip -.3in

\begin{equation}
PF=\begin{bmatrix}
4 &  6 &  8 &  1 &  3  & 2 &  7 &  5 \\
0 &  1 &  2 &  2 &  3  & 0 &  1 &  1 \\
\end{bmatrix}
\ess\ess\ess\Longleftrightarrow\ess\ess\ess
 \vcenter{\hbox{\includegraphics[width=1.5in]{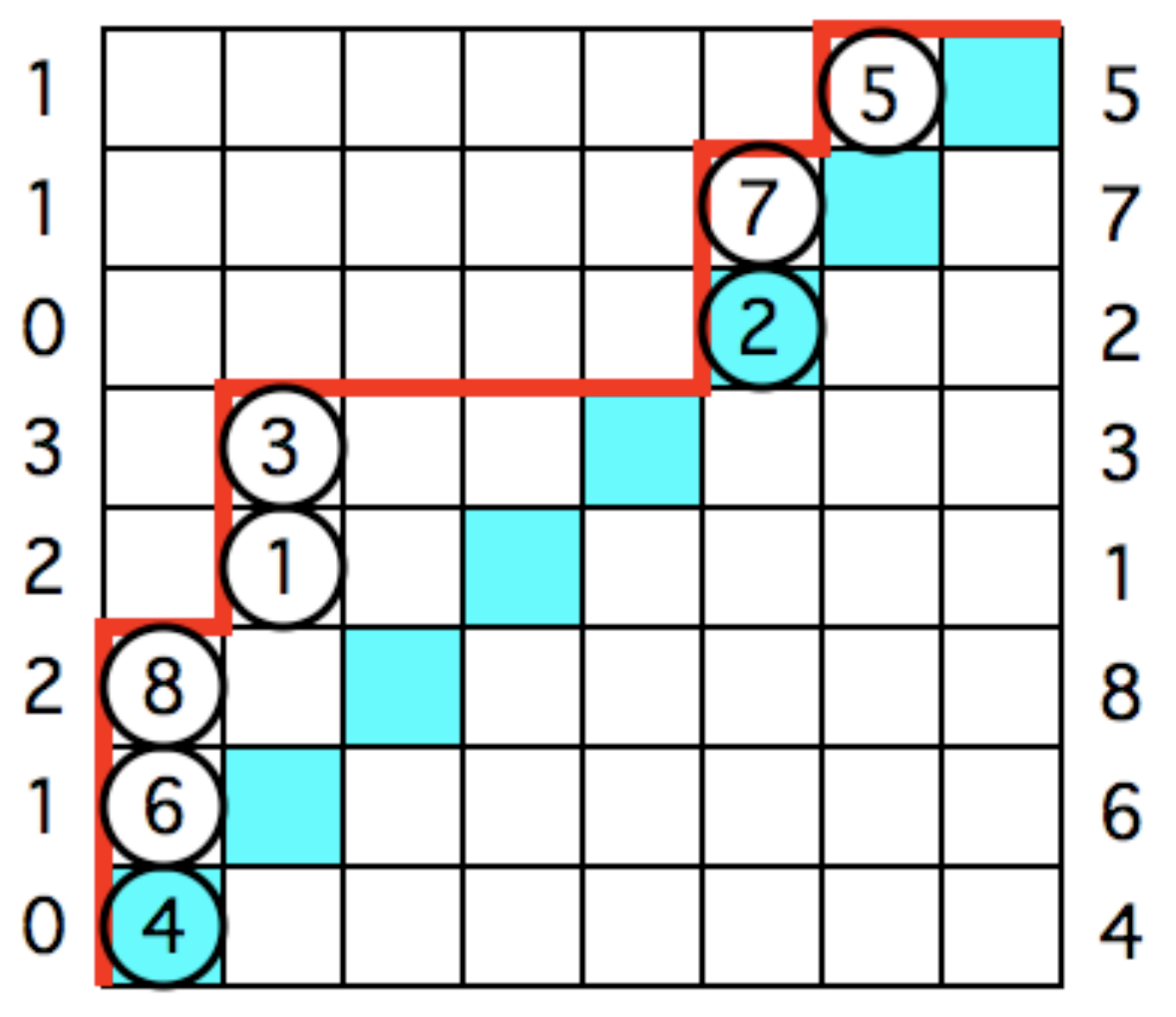}}}
\label{eqIIp1}
\end{equation}
The resulting tableau uniquely represents a parking function if and only if
the cars increase up the columns.

A necessary and sufficient condition for the vector U to give a Dyck path
is that 
\begin{equation}
u_1=0\ess\ess\ess \hbox{and }\ess\ess\ess\ess 0\le u_i\le u_{i-1}+1
\label{eqIIp2}
\end{equation}
This given, the column increasing property of the corresponding  tableau 
is assured  by the 
requirement that  $V=(v_1 , v_2 , \ldots  ,  v_n)$ is a permutation in $S_n$ 
satisfying
\begin{equation}
u_i=u_{i-1}+1\ess\Longrightarrow\ess v_i>v_{i-1}  
\label{eqIIp3}
\end{equation}
 
\noindent
We should   mention that  the component $u_i$ may also be viewed
as the order of the diagonal supporting car $V_i$.  
In the example above, car $3$ is in the third diagonal,   $1$ and $8$ are in the second 
diagonal,   $5,7$ and $6$ are in the first diagonal and $2$ and $4$ are in 
the main diagonal. We have purposely listed the cars by diagonals from right to left
starting with the highest diagonal. This gives the {\it diagonal  word} 
of $PF$  which we will denote $\sig(PF)$.
It is easily seen that $\sig(PF)$
can also be obtained directly  from the $2$-line array 
by successive right to left readings of the components 
of the vector $V=(v_1 , v_2 , \ldots  ,  v_n)$ according to decreasing values of
$u_1 , u_2 , \ldots  ,  u_n$. In previous work, each parking function is assigned a {\it weight }
\begin{equation}
w(PF)\ses t^{area(PF)} q^{dinv(PF)} 
\label{eqIIp4}
\end{equation}
where  
\begin{equation}
area(PF)=u_1+u_2+\cdots +u_n
\label{eqIIp5}
\end{equation}
and
\begin{equation} \label{eqIIp6}
dinv(PF)= 
\hskip -.1in\sum_{1\le i<j\le n }\Big( \chi( u_i=u_j\, \&\, v_i<v_j) 
\sps  \chi( u_i=u_j+1\, \&\, v_i>v_j)\Big)
\end{equation}
It is clear from this imagery, 
that the sum in \eqref{eqIIp5} gives the total number of cells between the supporting Dyck path and 
the main diagonal. We also see that two cars in the same diagonal with the car on the left smaller 
than the car on the right will contribute a unit to $dinv(PF)$.
The same holds true when a car on the left
is bigger than a car on the right with the latter in the adjacent lower diagonal.
Thus in the  the present example we have  
$$
 area(PF)=10,\ess dinv(PF)=4,\ess \sig(PF)=31857624,
$$
yielding
$$
w(PF)\ses t^{10}q^4  
$$
Here and after, the vectors $U$ and $V$ 
in the two line representation will be  also referred to as  $U(PF)$ and $V(PF)$.
It will also be convenient to denote by $\CPF$ the collection of parking functions
in the $n\times n$ lattice square. 
\sas 

The {\it Shuffle conjecture } \cite{HHLRU} states that for any partition 
$\mu=(\mu_1,\mu_2,\ldots ,\mu_\ell)\part n$
we have the identity
\begin{equation}
\LL\nabla e_n\scs h_{\mu_1}h_{\mu_2}\cdots h_{\mu_\ell}\RR =\sum_{PF\in \CPF }
t^{area(PF) }q^{dinv(PF)}\chi(\sig(PF)\in 
\EE_1 \sch \EE_2 \sch \cdots \sch \EE_\ell)
\label{eqIIp7}
\end{equation}
where $\nabla$ is the Macdonald eigen-operator introduced in \cite{BG},
$e_n$ is the familiar elementary symmetric function,
$ h_{\mu_1}h_{\mu_2}\cdots h_{\mu_\ell}$ is the {\it homogeneous}
symmetric function basis indexed by $\mu$, 
$\EE_1,\EE_2,\ldots ,\EE_\ell$ are successive segments 
of the word $1234\cdots n$ of respective lengths
$\mu_1,\mu_2,\ldots ,\mu_\ell$ and the symbol 
$\chi(\sig(PF)\in 
\EE_1\sch \EE_2\sch \cdots \sch \EE_\ell)$ is to indicate that the sum is to be carried out over  parking functions in $\CPF$ whose diagonal word
is a shuffle of the words $\EE_1,\EE_2,\ldots ,\EE_\ell$.
In \cite{Hag} Haglund proved the $l=2$ case of \eqref{eqIIp7}. By a remarkable 
sequence of identities it is shown in \cite{Hag} that this case is a consequence of the 
more refined identity
\begin{equation}
\LL \Delta_{h_J} E_{n,k} \scs e_n \RR\ses\hskip -.2in \sum_{PF \in\CAP_{n+J}(k)}
t^{area(PF)}q^{dinv(PF)}\chi(\sig(PF)\in \EE_J \sch \EE_{n-J})
\ess\ess\ess
\hbox{for $1\le k\le n$}
\label{eqIIp8}
\end{equation}
with $\EE_J=12\cdots J$, $\EE_{n-J}=J+1\cdots J+n$, and the sum is over
the collection 
 $\CAP_{n+J}(k)$ of parking functions in the $n+J\times n+J$
lattice square that have $k$ of the cars \hbox{$J+1,\ldots ,J+n$} in the
main diagonal including  car $J+n$ in the cell $(1,1)$. 
Here the $E_{n,k}$ are certain ubiquitous symmetric
functions introduced in \cite{GHag}  with sum
\begin{equation}
E_{n,1}+E_{n,2}+\cdots + E_{n,k}=e_n
\label{eqIIp9}
\end{equation}
and $\Delta_{h_j}$ is the linear operator obtained by setting
for the modified Macdonald basis in \cite{Mac}, \cite{GHai1}.
\begin{equation}
\Delta_{h_J}  \TH_\mu[X;q,t] \ses 
h_J\Big[\sum_{(i,j)\in \mu}t^{i-1}q^{j-1}\Big]\TH_\mu[X;q,t]
\label{eqIIp10}
\end{equation}
More recently, J. Haglund, J. Morse and M. Zabrocki \cite{HMZ} formulated 
a variety of new conjectures yielding surprising refinements of the shuffle conjecture.
In \cite{HMZ} they introduce a new ingredients in the Theory of parking functions. This is the 
{\it diagonal composition} of a Parking function,
which we denote by $p(PF)$  and  is simply the composition which
gives the position of the  zeros in the vector  $U=(u_1,u_2,\ldots ,u_n)$,
or equivalently the lengths of the segments of the main diagonal between\
successive hits of its supporting Dyck path.
One of their conjectures is the identity
\begin{align}
\LL\nabla \BC_{p_1}\BC_{p_2}\cdots \BC_{p_k}\, 1
\scs & h_{\mu_1},h_{\mu_2},\cdots,h_{\mu_\ell}\RR = \nonumber\\
&=\hskip -.2in
\sum_{\multi{PF\in \CPF \cr p(PF)=(p_1,p_2,\ldots ,p_k)}}
\hskip -.2in t^{area(PF)}
q^{dinv(PF)}\chi(\sig(PF)\in \EE_1\sch \EE_2\sch \cdots \sch \EE_\ell) 
\label{eqIIp11}
\end{align}
being valid for all $p\models n$ and 
$\mu=\part n$. Where
for each integer $a$ , $\BC_a$ is the operator
whose action on a  symmetric function $F[X]$, in plethystic notation,
can be simply expressed in the form
\begin{equation}
\BC_a F[X]\ses (-{\TS{ 1\over q }})^{a-1} F\big[X-\TS{1-1/q \over z}
\big]\displaystyle
\sum_{m\ge 0}z^m h_{m}[X]\, \Big|_{z^a},
\label{eqIIp12}
\end{equation}
Remarkably,  the operators in  \eqref{eqIIp11} appear to control the shape
of the supporting Dyck paths. Since in \cite{HMZ} it is shown that we also have
the identity
\begin{equation}
E_{n,k}\ses \sum_{p_1+p_2+\cdots +p_k=n}  \BC_{p_1}\BC_{p_2}\cdots \BC_{p_k}\, 1 
\label{eqIIp13}
\end{equation}
 it comes natural to inquire what becomes of Haglund's identity \eqref{eqIIp8}
 when $E_{n,k}$ is replaced by one of the symmetric polynomials  $\BC_{p_1}\BC_{p_2}\cdots \BC_{p_k}\, 1$ . Note, however that since the $k$ in \eqref{eqIIp8},
 under the action of $\DD_{h_J}$  
controls the number of {\it big cars} on the main diagonal, it natural to suspect
that the combination of $\DD_{h_J}$ and  $\BC_{p_1}\BC_{p_2}\cdots \BC_{p_k}\, 1$
would result in forcing $k$ of the big cars to hit the diagonal according
to the composition $p=(p_1,p_2,\ldots ,p_k)$. Miraculous as this might appear to be,
computer data beautifully confirms this mechanism $\ldots$ but up to a
point. In fact, following this line of reasoning,   one might conjecture 
the identity
\begin{align}
\LL \Delta_{h_J}& \BC_{p_1}\BC_{p_2}\cdots \BC_{p_k}\, 1 \scs e_n \RR
\ses \nonumber\\
& \sum_{\multi{PF\in \CAP_{J+n}(k)
\cr p(big(PF))=(p_1,p_2,\ldots ,p_k)}}\hskip -.3in
t^{area(PF)}q^{dinv(PF)}\chi(\sig(PF)\in 12\cdots J\sch J+1\cdots J+n)
\label{eqIIp14}
\end{align}
where $p(big(PF))$ now refers to the diagonal composition of the big cars, but otherwise the sum is over the same parking functions 
occurring in \eqref{eqIIp8}. Now that turned out to be false. Yet
computer data revealed that  the following ($q$-reduced) 
version of \eqref{eqIIp8} is actually true. Namely
$$
\LL \Delta_{h_J} \BC_{p_1}\BC_{p_2}\cdots \BC_{p_k}\, 1 \scs e_n \RR
\Big|_{q\RA 1}
\ses \hskip -.3in \sum_{\multi{PF\in \CAP_{n+J}(k)\cr p(big(PF))=(p_1,p_2,\ldots ,p_k)}}\hskip -.3in
t^{area(PF)}\chi(\sig(PF)\in 12\cdots j\sch j+1\cdots j+n)
$$
This circumstance led to the conjecture that \eqref{eqIIp14}  could be made true
by replacing the classical parking function ``{\it dinv}'' by a new dinv  
more focused on the positions of the big cars.
\sa

The main result of this paper is a proof of this conjecture. Banking on the intuition gained from previous work \cite{GXZ}  and using some of the identities
developed there with the $\BC_a$ and  $\BB_b$ operators we are able to derive the following basic recursion.
\sas

\begin{theorem}\label{th:IIp1}
For all compositions $p=(p_1,p_2,\ldots ,p_k)$  we have
\begin{align}
\LL \Delta_{h_j}\BC_{p_1}\BC_{p_2}\cdots \BC_{p_k} 1\scs  e_n \RR 
&\ses
t^{p_1-1}q^{k-1}
\LL \Delta_{h_{j-1}} \BC_{p_2}\cdots \BC_{p_k}\BB_{p_1} 1\scs  
 e_{n} \RR \nonumber\\
 &\bigsp\bigsp
 \sps \chi(p_1=1)\LL \Delta_{h_{j}}\BC_{p_2}\cdots \BC_{p_k} 1\scs   e_{n-1} \RR
 \label{eqIIp15}
\end{align}
 with $\BB_a=\om \widetilde \BB_a\om$ and for any symmetric function $F[X]$ 
\begin{equation}
\widetilde\BB_a  F[X]\ses F\big[ X-  \TS{1-q \over z}\big]\OM[ 
 zX]\, \Big|_{z^a}
\label{eqIIp16}
\end{equation}
\end{theorem}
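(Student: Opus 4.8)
The plan is to work purely on the symmetric-function side and establish the recursion \eqref{eqIIp15} as an operator identity evaluated against $e_n$. The starting point is the plethystic formula \eqref{eqIIp12} for $\BC_a$. I would first peel off the innermost operator $\BC_{p_1}$ acting on $1$: since $\BC_{p_1}1$ has an explicit closed form (it is essentially a single modified Hall--Littlewood / Jing-type object), I would rewrite $\BC_{p_1}1$ and then commute the remaining $\BC_{p_2}\cdots\BC_{p_k}$ past whatever plethystic substitution $\Delta_{h_j}$ and the pairing with $e_n$ produce. The key structural fact to exploit is the standard adjointness $\LL \Delta_{h_j} F\scs e_n\RR$-type manipulations together with the known commutation relations between the $\BC_a$ operators and the $\BB_b$ operators developed in \cite{GXZ}; the appearance of $\BB_{p_1}$ on the right-hand side, moved to the \emph{outside} of the word $\BC_{p_2}\cdots\BC_{p_k}$, is exactly the signature of such a commutation/transfer identity. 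The factor $t^{p_1-1}q^{k-1}$ should emerge from two sources: the $t^{p_1-1}$ from the area contribution of the first return segment (equivalently from the grading shift when one strips $\BC_{p_1}1$), and the $q^{k-1}$ from reindexing the $k$ operators after one of them is converted, i.e. from the $(-1/q)^{a-1}$ prefactors in \eqref{eqIIp12} collapsing under the pairing.

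Concretely, the steps I would carry out in order are: (i) expand $\LL \Delta_{h_j}\BC_{p_1}\cdots\BC_{p_k}1\scs e_n\RR$ using the dual of $\Delta_{h_j}$ and the fact that $\Delta_{h_j}$ has a clean action recorded in \eqref{eqIIp10}, reducing the left side to a residue-type extraction in the $z$-variables coming from the $\BC$'s; (ii) isolate the effect of the outermost $\Delta_{h_j}$ lowering to $\Delta_{h_{j-1}}$ --- this is where the dichotomy on whether $p_1=1$ enters, since the ``extra'' box contributed by $h_j$ versus $h_{j-1}$ interacts with the first return of the path, and a first-return of length $1$ is precisely the degenerate case that produces the second term $\chi(p_1=1)\LL\Delta_{h_j}\BC_{p_2}\cdots\BC_{p_k}1\scs e_{n-1}\RR$; (iii) recognize the surviving main term as $\LL\Delta_{h_{j-1}}\BC_{p_2}\cdots\BC_{p_k}\BB_{p_1}1\scs e_n\RR$ by matching the plethystic substitution $X\mapsto X - (1-q)/z$ in \eqref{eqIIp16} against what is left after $\BC_{p_1}1$ has been absorbed and the $z$-residue taken.

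I expect the main obstacle to be step (ii)--(iii): the bookkeeping that shows the plethystic alterations induced by $\Delta_{h_j}$ (acting on the Macdonald side) combine with the substitution built into $\BC_{p_1}$ to reproduce \emph{exactly} the substitution defining $\BB_{p_1}$ via $\om\widetilde\BB_{p_1}\om$. This requires carefully tracking the $\om$ involution and the interchange $q \leftrightarrow 1/q$-type asymmetries between the $(1-1/q)/z$ appearing in $\BC_a$ and the $(1-q)/z$ appearing in $\widetilde\BB_a$, and verifying that the leftover scalar is precisely $t^{p_1-1}q^{k-1}$ with no stray sign. A secondary obstacle is justifying that the operators genuinely commute in the way needed --- i.e. that $\BC_{p_2}\cdots\BC_{p_k}$ can be slid across the substitution without picking up correction terms --- for which I would lean on the commutation lemmas for the $\BC$ and $\BB$ operators already established in \cite{GXZ}. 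Once these are in hand, the two terms on the right of \eqref{eqIIp15} assemble by simply collecting the $p_1=1$ and $p_1>1$ contributions, and the theorem follows. Granting the identities from \cite{GXZ}, the remaining work is residue extraction and plethystic substitution, which is routine though delicate.
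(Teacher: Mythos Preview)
Your plan is not wrong in spirit, but it misses the structural reduction that makes the computation tractable, and it does not mention the Macdonald-polynomial machinery that actually drives the argument. Two concrete points:

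First, there is a confusion of operator ordering. In $\BC_{p_1}\BC_{p_2}\cdots\BC_{p_k}1$ the operator $\BC_{p_1}$ is \emph{outermost}, not innermost; you cannot ``peel off $\BC_{p_1}1$'' because $\BC_{p_1}$ acts on $\BC_{p_2}\cdots\BC_{p_k}1$, not on $1$. Likewise, in the target expression $\BC_{p_2}\cdots\BC_{p_k}\BB_{p_1}1$ the operator $\BB_{p_1}$ sits \emph{inside}, not outside. The paper's first move is to use the commutation relation $q\,\BC_a\BB_b=\BB_b\BC_a$ (from \cite{HMZ}, not \cite{GXZ}) to pull $\BB_{p_1}$ to the outside, which accounts for $q^{k-1}$; you have this backwards.

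Second, and more seriously, the paper does not attempt the direct residue extraction you outline. Instead it first passes to the $*$-scalar product and uses self-adjointness of $\Delta_{h_j}$ together with the fact that the polynomials $\BC_{p_2}\cdots\BC_{p_k}1$ form a basis to reduce the entire theorem to the single-variable identity
\[
\BC^*_a\,\Delta_{h_j}h_n\big[\tttt{X\over M}\big]
\;=\;
t^{a-1}\,\BB^*_a\,\Delta_{h_{j-1}}h_n\big[\tttt{X\over M}\big]
\;+\;\chi(a=1)\,\Delta_{h_j}h_{n-1}\big[\tttt{X\over M}\big].
\]
This eliminates the string of $\BC$'s entirely. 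The identity is then proved by expanding $\Delta_{h_j}h_n[X/M]$ via the Macdonald reciprocity formula \eqref{eq1p14} into a sum over auxiliary partitions $\nu$, applying the explicit adjoints $\BC_a^*$, $\BB_a^*$ from \cite{GXZ}, and simplifying using the Pieri summation identities \eqref{eq1p22} and \eqref{eq1p23}. The factor $t^{a-1}$ emerges from a telescoping of $(tq)^{r-1}/q^{r-1}$ inside these sums, not from any area heuristic. None of this Macdonald machinery --- reciprocity, the $c_{\mu\nu}$/$d_{\mu\nu}$ summations, the auxiliary partition $\nu$ --- appears in your plan, and without it I do not see how step~(ii)--(iii) of your outline can be carried out; the bookkeeping you flag as ``routine though delicate'' is in fact the entire substance of the proof.
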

Now the Haglund-Morse-Zabrocki conjectures also assert that, replacing the 
$\BC$ operators by the $\BB$ operators in \eqref{eqIIp11}
has the effect of allowing 
the controlled Dyck paths to hit the diagonal everywhere, including
the points forced by the composition  $p$. This led us to interpret 
the first polynomial on the right hand side of \eqref{eqIIp8} as a  weighted enumeration
of the  collection of parking functions with diagonal word of 
a shuffle of $12\cdots (j-1)$ by $j\cdots n+j-1$ whose big cars
hit the main diagonal according to the collection of compositions obtained by 
concatenating $(p_2,\ldots,p_k)$ with an arbitrary composition of $p_1$.
Guided by this interpretation we succeeded to obtain by means of \eqref{eqIIp16}
a recursive construction of the appropriate   new dinv  and prove the
identity
\begin{align}
\LL \Delta_{h_j} &\BC_{p_1}\BC_{p_2}\cdots \BC_{p_k}\, 1 \scs e_n \RR
\ses  \nonumber\\
&\sum_{\multi{PF\in \CAP_{J+n}(k) 
\cr p(big(PF))=(p_1,p_2,\ldots ,p_k)}}\hskip -.3in
t^{area(PF)}q^{ndinv(PF)}\chi(\sig(PF)\in 12\cdots J\sch J+1\cdots J+n)
\label{eqIIp17}
\end{align}
To carry out all this we need a collection of identities of Macdonald polynomial theory
already used in previous work. These identities and the corresponding notational 
conventions will be collected  in the first section with references  to 
the original sources for their proofs. The second section will be dedicated to 
the proof of Theorem \ref{th:IIp1}. All the corresponding combinatorial reasoning
including the construction of the new dinv is  given in the third section where
our ``{\it ndinv}'' is also given an equivalent somewhat less recursive construction
with the hope that it may be conducive  to the discovery of a direct
formula  for the new dinv which, as in the case of the classical dinv, 
is closely related to the geometry of the corresponding parking function
diagram. 

\end{section}
\vfill
\newpage

\begin{section}{Auxiliary identities from the Theory of Macdonald polynomials}

The space of symmetric polynomials will be denoted $\Lambda$. The subspace of
homogeneous symmetric polynomials of degree $m$ will be denoted by
$\Lambda^{=m}$.
We will seldom work with symmetric polynomials expressed in terms of variables but rather express them
in terms of one of the six classical symmetric function bases
\sas
\noindent
{(1)} ``{\it power}''   $\{p_\mu\}_\mu$, 
{ (2)}   ``{\it monomial}''  $\{m_\mu\}_\mu$, 
{ (3)}  ``{\it homogeneous}''  $\{h_\mu\}_\mu$, \\
{ (4)}   ``{\it elementary}''  $\{e_\mu\}_\mu$,
{ (5)}   ``{\it forgotten}''  $\{f_\mu\}_\mu$  
and
{ (6)}   ``{\it Schur}''  $\{s_\mu\}_\mu$~.

We recall that the fundamental involution $\om$ may be defined by setting 
for the power basis indexed by  $\mu=(\mu_1,\mu_2,\ldots ,\mu_k)\part n$

\begin{equation}
\om p_\mu\ses (-1)^{n-k}p_\mu\ses  (-1)^{|\mu|-l(\mu) }p_\mu
\label{eq1p1}
\end{equation}
where for any vector $\ess v=(v_1,v_2,\cdots,v_k)$ we set
 $
\ess |v|=\sum_{i=1}^k v_i 
$
and 
$ l(v)=k $.

In dealing with symmetric function identities, specially with those arising
in the Theory of Macdonald Polynomials, we find it convenient
and often indispensable to use 
plethystic notation. This device has a straightforward definition which can be
verbatim implemented in {\it MAPLE} or {\it MATHEMATICA}
for computer experimentation. We simply set for
any expression $E=E(t_1,t_2 ,\ldots )$ and any power symmetric function $p_k$
\begin{equation}
p_k[E]\ses E(\, t_1^k,t_2^k,\ldots ).
\label{eq1p2}
\end{equation}
This given, for any symmetric function $F$ we set
\begin{equation}
F[E]\ses Q_F(p_1,p_2, \ldots )\Big|_{p_k\RA E(\, t_1^k,t_2^k,\ldots )}
\label{eq1p3}
\end{equation}
where $Q_F$ is the polynomial yielding the expansion of $F$ in 
terms of the power basis. Note
that in writing  $E(t_1,t_2,\ldots )$ we are tacitly assuming 
that $ t_1,t_2,t_3,\ldots $
are all the variables appearing in $E$ and in writing
 $ E(t_1^k,t_2^k,\ldots )$ we intend
that all the variables appearing in $E$ have been raised 
to their $k^{th}$ power.

A paradoxical but necessary property of plethystic substitutions is 
 that \eqref{eq1p2} requires  
\begin{equation}
p_k[-E]\ses -p_k[E].
\label{eq1p5}
\end{equation}

This notwithstanding, we will still need to carry out ordinary  changes 
of signs. To distinguish it from the {\it plethystic} minus sign,
we will carry out the {\it ordinary} sign change  by
 prepending our expressions
with a superscripted minus sign, or as the case may be, by means
of a new variables $\eee$ 
which outside of the plethystic bracket  is simply replaced by $-1$. 
For instance, these conventions give for $X_k=x_1+x_2+\cdots +x_n$
$$
 p_k[- ^-X _n]\ses   (-1)^{k-1} \sum_{i=1}^nx_i^k
$$
or, equivalently
$$
p_k[ -\eee X_n]\ses  -\eee^k\sum_{i=1}^nx_i^k\ses  (-1)^{k-1} \sum_{i=1}^nx_i^k
$$  
In particular we get for $X=x_1+x_2+x_3+\cdots $ 
$$
 \om p_k[X]\ses p_k[-^-X ]  .
$$
Thus  for any symmetric function $F\in \Lambda$ and any expression $E$ we have
\begin{equation}
\om F[E]\ses F[-^-E]\ses F[-\eee E]
\label{eq1p6}
\end{equation}

In particular,  if $F\in \Lambda^{=k} $  we may also rewrite this as
\begin{equation}
F[-E]\ses   \om F[ ^-E]\ses (-1)^k \om F[ E].
\label{eq1p7}
\end{equation}

The formal power series
\begin{equation}
\OM\ses exp\Big(\sum_{k\ge 1}{p_k\over k}\Big)
\label{eq1p8}
\end{equation}
 combined with plethysic substitutions will  provide   
a powerful way of dealing with the many generating functions occurring in
our manipulations.

Here and after it will be convenient to 
identify partitions with their (french) Ferrers diagram. Given
a partition $\mu$ and a cell $c\in \mu$, Macdonald introduces four parameters
$l =l_\mu(c)$, $l'=l'_\mu(c)$, $a =a_\mu(c)$ and  $a'=a'_\mu(c)$ 
called 
{\it leg, coleg, arm } and {\it coarm } which  give the 
number of lattice cells of $\mu$  strictly  NORTH,  
SOUTH, 
EAST and WEST of $c$, (see attached figure).
Following Macdonald we will set
\begin{equation}
\bigsp n(\mu)\ses \sum_{c\in\mu} l_\mu(c)\ses \sum_{c\in\mu} l'_\mu(c)
\ses \sum_{i=1}^{l(\mu)} (i-1)\mu_i. 
\label{eq1p9}
\end{equation}


$${\includegraphics[width=1.8in]{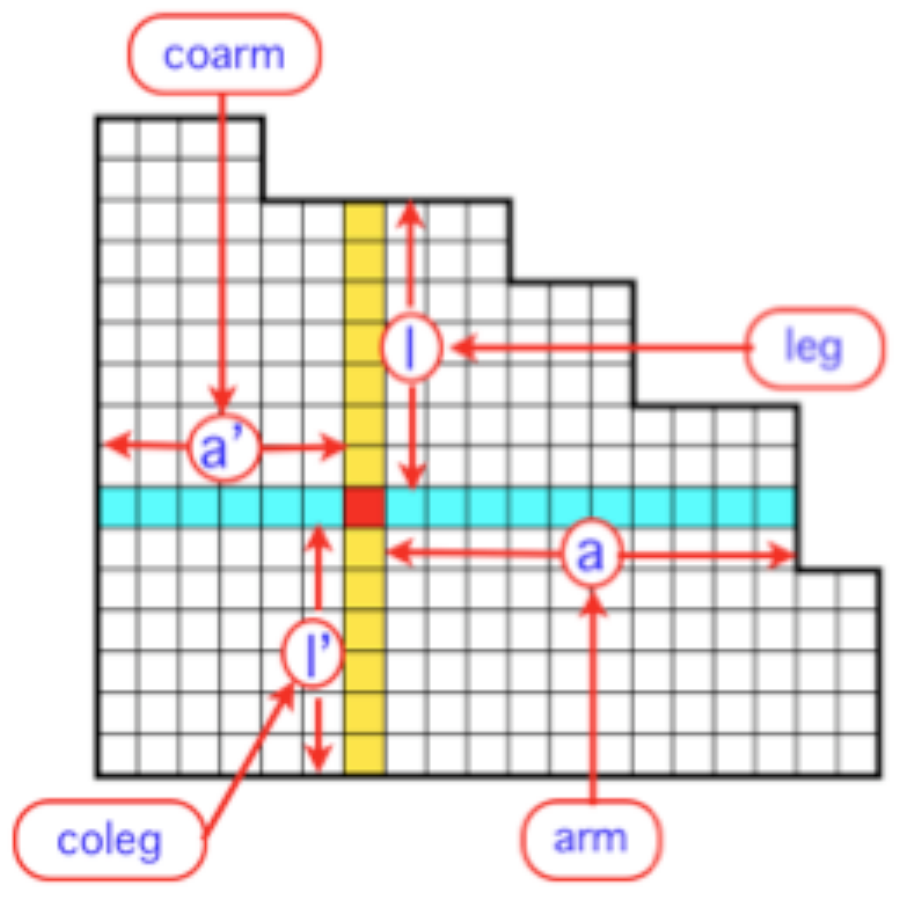}} $$ 

Denoting by $\mu'$ the conjugate of $\mu$, 
the basic ingredients playing a role in the theory of Macdonald
polynomials are  

\begin{align}
  T_\mu=&t^{n(\mu)}q^{n(\mu')}
\scs  
\ess\ess
B_\mu(q,t)  =\sum_{c\in \mu}t^{l'_\mu(c)}q^{a'_\mu(c)}\scs \nonumber\\
 \Pi_\mu(q,t)&= \prod_{c\in \mu;c\neq(0,0)}
(1-t^{l'_\mu(c)}q^{a'_\mu(c)}) \scs\ess\ess M=(1-t)(1-q)\label{eq1p10}
\\
D_\mu(q,t)= MB_\mu(q,t)- 1& \scs\ess\ess
 w_\mu(q,t)=\prod_{c\in \mu}(q^{a _\mu(c)} -t^{l _\mu(c)+1})(t^{l _\mu(c)} -q^{a _\mu(c)+1}),
\nonumber
\end{align}
together with a deformation of the Hall scalar product, which we call
the {\it star} scalar product,   defined by setting for the power basis 
$$
\LL p_\la\scs p_\mu \RR_*\ses 
(-1)^{|\mu|-l(\mu)} \prod_i (1-t^{\mu_i})(1-q^{\mu_i})\ssp z_\mu\ssp
\chi(\la=\mu) ,
$$
where $z_\mu$ gives the order of the stabilizer of a permutation with cycle structure $\mu$.
\sa

This given, the modified Macdonald Polynomials we will deal with here are the unique 
symmetric function basis $\big\{\TH_\mu(X;q,t)\big \}_\mu$ which  is
upper triangularly related to the basis $\{s_\la[\tttt{X\over t-1}]\}_\la$
and satisfies the orthogonality condition
\begin{equation}
\LL \TH_\la\scs \TH_\mu\RR_*\ses \chi(\la=\mu) w_\mu(q,t)
\label{eq1p11}
\end{equation}

In this writing we will make intensive use of the operator $\nabla$ defined by setting for all partitions $\mu$
$$
\nabla \TH_\mu \ses T_\mu \TH_\mu.
$$
A closely related family of symmetric function operators is obtained
by setting for a symmetric function $F[X]$ 
$$
\DD_F \TH_\mu \ses F[B_\mu] \TH_\mu
$$
It is good to keep in mind that, because of the relation $e_n[B_\mu]= T_\mu$ for $\mu\part n$, the operator 
$\nabla$ itself reduces to $\DD_{e_n}$ when acting  
symmetric polynomials that are  homogeneous of degree $n$.

Recall that for our version of the Macdonald polynomials
the Macdonald Reciprocity formula states that
\begin{equation}
{\TH_\aaa[1+u\, D_\bbb ] \over \prod_{c\in \aaa }
(1-u\, t^{l'}q^{a' })}
\ses 
{\TH_\bbb[1+u\,D_\aaa ] \over \prod_{c\in \bbb }
(1-u\, t^{l'}q^{a' })}
\ess\ess\ess\ess\ess\ess\hbox {(for  all   pairs $\aaa,\bbb$)}
\label{eq1p12}
\end{equation}

 We will use here several  special evaluations of \eqref{eq1p12}. To begin, canceling the 
common factor $(1-u)$ out of  the denominators on both sides of 
\eqref{eq1p12} then setting $u=1$ gives
\begin{equation}
{\TH_\aaa[M B_\bbb ] \over  \Pi_\aa}
\ses 
{\TH_\bbb[ M B_\aaa ] \over\Pi_\bb}
\ess\ess\ess\ess\ess\ess\hbox {(for  all   pairs $\aaa,\bbb$)}
\label{eq1p13}
\end{equation}

On the other hand replacing $u$ by $1/u$ and letting $u=0$ in \eqref{eq1p12}
gives
\begin{equation}
(-1)^{|\aaa|}{\TH_\aaa[D_\bbb ] \over   T_\aaa}
\ses 
(-1)^{|\bbb|}{\TH_\bbb[ D_\aaa ] \over T_\bb}
\ess\ess\ess\ess\ess\ess\hbox {(for  all   pairs $\aaa,\bbb$)}
\label{eq1p14}
\end{equation}

Since for $\bbb$ the empty partition we can take $\TH_\bbb=1$ and $D_\bbb=-1$,
\eqref{eq1p12}  in this case  for $\aa=\mu$ reduces to
\begin{equation}
 \TH_\mu[1-u\, ]\ses \prod_{c\in \mu}(1-ut^{l'}q^{a'})\ses(1-u)\sum_{r=0}^{n-1}
(-u)^r e_r[B_\mu-1]
\label{eq1p15}
\end{equation}
This identity yields the coefficients of hook Schur functions in the expansion.
\begin{equation}
\TH_\mu[X;q,t]\ses \sum_{\la\part |\mu|}s_\mu[X] \TK_{\la\mu}(q,t)
\label{eq1p16}
\end{equation}
Recall that the addition formula for Schur functions gives
\begin{equation}
s_\mu[1-u]\ses 
\begin{cases}
(-u)^r(1-u) & \hbox{ if }\mu=(n-r,1^r)\\
 0 & otherwise
\end{cases}
\label{eq1p17}
\end{equation}
Thus \eqref{eq1p16}, with $X=1-u$, combined with \eqref{eq1p15}  gives for $\mu\part n$
$$
\LL  \TH_\mu\scs s_{(n-r,1^r)}\RR\ses e_r[B_\mu-1]
$$
and the identity $e_rh_{n-r}=s_{(n-r,1^r)}+s_{(n-r-1,1^{r-1})}$ gives
\begin{equation}
\LL  \TH_\mu\scs e_rh_{n-r}\RR\ses e_r[B_\mu].
\label{eq1p19}
\end{equation}
Since for  $\bbb=(1)$ we have $\TH_\bbb=1$
and $\Pi_\bbb=1$, formula \eqref{eq1p13} reduces to the 
surprisingly simple identity
\begin{equation}
\TH_\aaa[M]\ses MB_\aaa \Pi_\aaa.
\label{eq1p20}
\end{equation}
Last but not least we must also recall that we have  
  the Pieri formulas
\begin{equation}
a)\ess\ess e_1\TH_\nu\ses \sum_{\mu\LA \nu}d_{\mu\nu}\TH_\mu\scs
\ess\ess\ess\ess\ess\ess\ess\ess\ess
b)\ess\ess e_1^\perp\TH_\mu\ses \sum_{\nu\RA \mu}c_{\mu\nu}\TH_\nu\scs
\label{eq1p21}
\end{equation}
and their corresponding summation formulas (see   \cite{BGHT},\cite{GHT},\cite{Zab} )
\begin{equation}
\sum_{\nu\RA\mu}c_{\mu\nu}(q,t)\, (T_\mu/T_\nu)^k\ses 
\begin{cases}
{{tq}\over M}\ssp h_{k+1}\big[D_\mu(q,t)/tq\big] &\hbox{ if }k \geq 1\\
B_\mu(q,t) & \hbox{ if }k=0 
\end{cases}
\label{eq1p22}
\end{equation}
\begin{equation}
\sum_{\mu\leftarrow\nu}d_{\mu\nu}(q,t)\, (T_\mu/T_\nu)^k\ses 
\begin{cases}
(-1)^{k-1}\ssp e_{k-1}\big[D_\nu(q,t) \big] & \hbox{ if } k\geq 1,\\
1 & \hbox{ if } k=0
\end{cases}
\label{eq1p23}
\end{equation}
Here $\nu\RA \mu$ simply means that the sum is over  $\nu$'s   obtained 
from $\mu$ by removing a corner cell and  $\mu\LA \nu$ 
means  that the sum is over $\mu$'s   obtained 
from $\nu$ by adding a corner cell.

It will also  be useful to know that these two Pieri coefficients are related by the identity
\begin{equation}
d_{\mu\nu}\ses M c_{\mu\nu} \tttt{w_\nu\over w_\mu} 
\label{eq1p24}
\end{equation}
Recall that the Hall scalar product in the theory of Symmetric functions
may be defined by setting, for the power basis
\begin{equation}
\LL p_\la\scs p_\mu \RR \ses 
 z_\mu\ssp
\chi(\la=\mu) ,
\label{eq1p25}
\end{equation}
It follows from this that
the $*$-scalar product, is simply related to the   
Hall scalar product
by setting for all pairs of symmetric functions $f,g$
\begin{equation}
\LL f\scs g\RR_* \ses \LL f\scs \om \phi g\RR,
\label{eq1p26}
\end{equation}
where it has been customary to let $\phi$ be the operator
defined by setting for any symmetric function $f$
\begin{equation}
\phi\, f[X]\ses f[MX].
\label{eq1p27}
\end{equation}
Note that the inverse of $\phi$ is usually written in the form
\begin{equation}
f^*[X]\ses f[X/M]
\label{eq1p28}
\end{equation}
In particular we also have  for all symmetric functions $f,g$
\begin{equation}
\LL f\scs g\RR \ses \LL f, \om g^* \RR_* ~.
\label{eq1p29}
\end{equation}
The orthogonality relations in \eqref{eq1p11} yield the Cauchy identity
for our Macdonald polynomials in the form
\begin{equation}
\OM\left[-\eee \tttt{XY\over M} \right]\ses \sum_{\mu}
{\TH_\mu[X]\TH_\mu[Y]\over w_\mu}
\label{eq1p30}
\end{equation}
which restricted to its homogeneous component of degree $n$ in $X$ and $Y$ 
reduces to
\begin{equation}
e_n\left[  \tttt{XY\over M} \right]\ses \sum_{\mu\part n}
{\TH_\mu[X]\TH_\mu[Y]\over w_\mu}~.
\label{eq1p31}
\end{equation}

Note that the orthogonality relations in \eqref{eq1p11} yield us 
the following  Macdonald polynomial expansions 
\sas

\begin{prop}\label{prop:1p1}
For all $n\ge 1$ we have
\begin{align}
a)&\ess  
e_n\big[\tttt{X\over M} \big]= \sum_{\mu\part n}
{\TH_\mu[X]\over w_\mu}\nonumber
\scs
\\   
b)&\ess 
h_k\big[\tttt{X\over M} \big]
e_{n-k}\big[\tttt{X\over M} \big]= \sum_{\mu\part n}
{e_k[B_\mu]\TH_\mu[X]\over w_\mu}~,\nonumber
\scs
\\  
c)&\ess  
h_n\big[\tttt{X\over M} \big]= \sum_{\mu\part n}
{T_\mu \TH_\mu[X]\over w_\mu}~,\nonumber
\\
d)&\ess (-1)^{n-1} p_n\ses (1- t^n)(1-q^n) \sum_{\mu\part n}
{\Pi_\mu \TH_\mu[X]\over w_\mu}~,\label{eq1p32}
\\
e)&\ess\ess\ess
e_1[X/M]^n 
\ses 
\sum_{\mu\part n}
{\TH_\mu[X]\over w_\mu} \LL \TH_\mu , e_1^n\RR~,\nonumber
\\
f)&\ess\ess\ess
e_n \ses
\sum_{\mu\part m}
{\TH_\mu[X] MB_\mu \Pi_\mu\over w_\mu}~.\nonumber
\end{align}
\end{prop}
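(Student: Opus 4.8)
\smallskip
\noindent\textbf{Plan of proof.}\enspace All six statements assert that a particular symmetric function $F[X]$, homogeneous of degree $n$, has $\TH_\mu$-expansion $F[X]=\sum_{\mu\part n}\frac{c_\mu}{w_\mu}\TH_\mu[X]$ for an explicit evaluation $c_\mu$; by the orthogonality relations \eqref{eq1p11} this is equivalent to the single family of identities $\LL F\scs\TH_\mu\RR_*=c_\mu$ for $\mu\part n$. The plan is therefore to read each expansion off the Cauchy identity \eqref{eq1p31}, $e_n[\tttt{XY\over M}]=\sum_{\mu\part n}\TH_\mu[X]\TH_\mu[Y]/w_\mu$, by feeding in a convenient plethystic alphabet for $Y$ and using a known closed form of $\TH_\mu$ there. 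Five of the six drop out of a bare specialization of $Y$; only (d) needs an additional step, and that is where the actual work lies.

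For (a), (b), (c) I would take $Y=1-u$, so that $p_j[1-u]=1-u^j$. By \eqref{eq1p15}, $\TH_\mu[1-u]=(1-u)\sum_{r\ge0}(-u)^re_r[B_\mu-1]$, and since $e_k[B_\mu]=e_k[B_\mu-1]+e_{k-1}[B_\mu-1]$ this telescopes to $\TH_\mu[1-u]=\sum_{k=0}^{n}(-u)^ke_k[B_\mu]$ (the sum terminating at $k=n$ because $B_\mu$ is a sum of $n$ monomials, with $e_n[B_\mu]=T_\mu$). On the other side, $\tttt{X(1-u)\over M}=\tttt{X\over M}-u\tttt{X\over M}$ plethystically, so the addition rule $e_n[A-B]=\sum_k(-1)^kh_k[B]\,e_{n-k}[A]$ gives $e_n[\tttt{X(1-u)\over M}]=\sum_{k=0}^{n}(-u)^kh_k[\tttt{X\over M}]e_{n-k}[\tttt{X\over M}]$. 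Matching the coefficient of $u^k$ in \eqref{eq1p31} taken at $Y=1-u$ proves (b); the case $k=0$ is (a), and the case $k=n$ (using $e_n[B_\mu]=T_\mu$) is (c).

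For (f) I would put $Y=M$ in \eqref{eq1p31}: the left side becomes $e_n[\tttt{XM\over M}]=e_n[X]$, while $\TH_\mu[M]=MB_\mu\Pi_\mu$ by \eqref{eq1p20}, which is exactly (f). For (e), observe that $e_1[\tttt{X\over M}]=p_1/M$, so $e_1[\tttt{X\over M}]^n=p_1^n/M^n$; from the power-sum definition of the $*$-scalar product one has $\LL p_1^n\scs g\RR_*=M^n\LL p_1^n\scs g\RR$ for every $g\in\La^{=n}$, hence the $\TH_\mu$-coefficient of $e_1[\tttt{X\over M}]^n$ equals $\frac1{M^n}\cdot\frac{\LL p_1^n\scs\TH_\mu\RR_*}{w_\mu}=\frac{\LL e_1^n\scs\TH_\mu\RR}{w_\mu}$, which is (e).

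The remaining identity (d) is the one I expect to be the main obstacle: the care needed is in keeping the plethystic and the ordinary minus signs straight in $\TH_\mu[1-u]$ and in $e_n[\tttt{X(1-u)\over M}]$, and in passing correctly to $u=1$. Again I would use \eqref{eq1p31} with $Y=1-u$ and cancel the common factor $1-u$. On the right, \eqref{eq1p15} gives $\TH_\mu[1-u]/(1-u)=\sum_{r\ge0}(-u)^re_r[B_\mu-1]$, whose value at $u=1$ is $\prod_{c\in\mu,\,c\ne(0,0)}(1-t^{l'}q^{a'})=\Pi_\mu$. On the left, $e_n[\tttt{X(1-u)\over M}]=\sum_k(-u)^kh_k[\tttt{X\over M}]e_{n-k}[\tttt{X\over M}]$ vanishes at $u=1$ (this is the classical identity $\sum_{k=0}^n(-1)^kh_ke_{n-k}=0$ for $n\ge1$), so $e_n[\tttt{X(1-u)\over M}]/(1-u)$ is again a polynomial in $u$; its value at $u=1$ works out to $\sum_k(-1)^{k-1}k\,h_k[\tttt{X\over M}]e_{n-k}[\tttt{X\over M}]$, and the classical relation $\sum_k(-1)^{k-1}k\,h_ke_{n-k}=(-1)^{n-1}p_n$ then gives $(-1)^{n-1}p_n[\tttt{X\over M}]=(-1)^{n-1}p_n/\big((1-t^n)(1-q^n)\big)$, which is (d). An equivalent route computes $\LL p_n\scs\TH_\mu\RR$ directly: using \eqref{eq1p16}, the hook coefficient $\LL\TH_\mu\scs s_{(n-r,1^r)}\RR=e_r[B_\mu-1]$, and the vanishing of symmetric-group characters off hooks on an $n$-cycle, one gets $\LL p_n\scs\TH_\mu\RR=\sum_{r\ge0}(-1)^re_r[B_\mu-1]=\Pi_\mu$, and the power-sum definition of the $*$-scalar product rescales this by $(-1)^{n-1}(1-t^n)(1-q^n)$. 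Beyond (d), all the steps are routine bookkeeping with \eqref{eq1p15}, \eqref{eq1p20}, \eqref{eq1p30}--\eqref{eq1p31} and \eqref{eq1p11}.
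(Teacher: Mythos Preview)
Your argument is correct and in the same spirit as the paper's: the paper does not actually supply a detailed proof of Proposition~\ref{prop:1p1}, merely remarking that ``the orthogonality relations in \eqref{eq1p11} yield us the following Macdonald polynomial expansions,'' and then, in the Remark immediately following, sketching (f) exactly as you do (plug $Y=M$ into \eqref{eq1p31} and invoke \eqref{eq1p20}). Your derivations of (a)--(c) from the single specialization $Y=1-u$ in \eqref{eq1p31}, of (e) from the power-sum definition of the $*$-scalar product, and of (d) by dividing out $1-u$ and passing to $u=1$ (or, equivalently, via the hook coefficients and $\langle p_n,s_{(n-r,1^r)}\rangle=(-1)^r$) are all correct and fill in the details the paper leaves to the reader.
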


Finally it is good to keep in mind, for future use, that we have for all partitions $\mu$   
\begin{equation}
T_\mu   \om \TH_\mu[X;1/q,1/t]\ses  \TH_\mu[X;q,t] 
\label{eq1p33}
\end{equation}

\begin{remark} 
It was conjectured in \cite{GHai2} and proved in \cite{Hai} that the  bigraded Frobenius characteristic
of the diagonal Harmonics of $S_n$ is given by the symmetric function
\begin{equation}
DH_n[X;q,t]\ses \sum_{\mu\part n}{T_\mu  \TH_\mu(X;q,t)M B_\mu(q,t) 
\Pi_\mu(q,t) \over w_\mu(q,t)}
\label{eq1p34}
\end{equation}
Surprisingly the intricate rational function on the right hand 
side is none other than $\nabla e_n$. To see this we simply combine 
the relation in \eqref{eq1p20} with the degree $n$ restricted Macdonald-Cauchy formula
\eqref{eq1p31} obtaining
\begin{equation}
e_n[X]=e_n\left[\tttt{XM\over M} \right]\ses \sum_{\mu\part n}
{\TH_\mu[X]MB_\mu \Pi_\mu\over w_\mu}
\label{eq1p35}
\end{equation}
This is perhaps the simplest way to prove \eqref{eq1p32} f). 
This discovery is precisely what led to  the introduction of $\nabla$ in the first place.
\end{remark}
\end{section}
\vfill
\newpage

\begin{section}{Proof of the basic recursion}

To establish  Theorem \ref{th:IIp1} we need some preliminary observations.
To begin we have the following reduction.
\sas

\begin{theorem}\label{th:2p1}
For all  $p=(p_1,p_2,\ldots ,p_k)\models n$  and $j\ge 0$ we have

\begin{align}
\LL \Delta_{h_j}\BC_{p_1}\BC_{p_2}\cdots \BC_{p_k} 1\scs  e_n \RR 
&\ses
t^{p_1-1}q^{k-1}
\LL \Delta_{h_{j-1}} \BC_{p_2}\cdots \BC_{p_k}\BB_{p_1} 1\scs  
 e_{n} \RR \nonumber\\
&\bigsp\bigsp
 \sps \chi(p_1=1)\LL \Delta_{h_{j}}\BC_{p_2}\cdots \BC_{p_k} 1\scs   e_{n-1} \RR 
\label{eq2p1}
\end{align}
if and only if , with  $\BC_a^*$ and $\BB_a^*$  the $*$-scalar product duals
of $\BC_a $ and $\BB_a $,  we have
\begin{equation}
\BC^*_a\Delta_{h_j}h_n[\tttt{X\over M}]
\ses
t^{a-1}\BB^*_{a} 
 \Delta_{h_{j-1}}h_{n}[\tttt{X\over M}]
 \sps \chi(a=1)\Delta_{h_j}h_{n-1}[\tttt{X\over M}]
\label{eq2p2}
\end{equation}
for all  $j\ge 0$ and $1\le a\le n$. 
\end{theorem}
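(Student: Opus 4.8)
The plan is to pass from the Hall pairing $\LL\cdot\scs e_n\RR$ to the $*$-pairing, in which $\Delta_{h_j}$, $\BC_a$ and $\BB_a$ all have manageable adjoints, and then simply read off the claimed equivalence. First I would record three facts. (i) Since $\phi\,h_n[X/M]=h_n$ and $\om h_n=e_n$, we have $e_n=\om\phi\,h_n[X/M]$, so by \eqref{eq1p26}, $\LL f\scs e_n\RR=\LL f\scs h_n[X/M]\RR_*$ for every symmetric function $f$ (and likewise with $n$ replaced by $n-1$). (ii) By the orthogonality \eqref{eq1p11} the operator $\Delta_{h_j}$ is self-adjoint for $\LL\cdot\scs\cdot\RR_*$, being diagonal on the $*$-orthogonal basis $\{\TH_\mu\}$. (iii) From the $\BC$--$\BB$ commutation identities of \cite{GXZ} one has $\BB_a\BC_b=q\,\BC_b\BB_a$, hence $\BB_a\BC_{p_2}\cdots\BC_{p_k}=q^{k-1}\BC_{p_2}\cdots\BC_{p_k}\BB_a$; this is precisely the origin of the factor $q^{k-1}$ in \eqref{eq2p1}.

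Next I would fix a composition $p=(p_1,p_2,\ldots,p_k)\models n$ and $j\ge 0$ and rewrite each side of \eqref{eq2p1} as a $*$-pairing against $\BC_{p_2}\cdots\BC_{p_k}1$. On the left, (i) gives $\LL\Delta_{h_j}\BC_{p_1}\BC_{p_2}\cdots\BC_{p_k}1\scs e_n\RR=\LL\Delta_{h_j}\BC_{p_1}\BC_{p_2}\cdots\BC_{p_k}1\scs h_n[X/M]\RR_*$, and moving $\Delta_{h_j}$ across by (ii) and then $\BC_{p_1}$ across by the definition of $\BC_{p_1}^*$ yields $\LL\BC_{p_2}\cdots\BC_{p_k}1\scs \BC_{p_1}^*\Delta_{h_j}h_n[X/M]\RR_*$. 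On the right, in the first term I would use (iii) to replace $q^{k-1}\BC_{p_2}\cdots\BC_{p_k}\BB_{p_1}1$ by $\BB_{p_1}\BC_{p_2}\cdots\BC_{p_k}1$, then apply (i) and move $\Delta_{h_{j-1}}$ and $\BB_{p_1}$ across, obtaining $t^{p_1-1}\LL\BC_{p_2}\cdots\BC_{p_k}1\scs \BB_{p_1}^*\Delta_{h_{j-1}}h_n[X/M]\RR_*$; in the last term, which appears only when $p_1=1$ (so that $\BC_{p_2}\cdots\BC_{p_k}1$ is homogeneous of degree $n-1$), (i) and (ii) turn $\chi(p_1=1)\LL\Delta_{h_j}\BC_{p_2}\cdots\BC_{p_k}1\scs e_{n-1}\RR$ into $\chi(p_1=1)\LL\BC_{p_2}\cdots\BC_{p_k}1\scs \Delta_{h_j}h_{n-1}[X/M]\RR_*$. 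Therefore \eqref{eq2p1} for this particular $p$ is equivalent to the single identity
$$\LL\BC_{p_2}\cdots\BC_{p_k}1\scs \BC_{p_1}^*\Delta_{h_j}h_n[X/M]-t^{p_1-1}\BB_{p_1}^*\Delta_{h_{j-1}}h_n[X/M]-\chi(p_1=1)\Delta_{h_j}h_{n-1}[X/M]\RR_*=0 .$$

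The two implications now drop out of this equivalence. If \eqref{eq2p2} holds then, taking $a=p_1$, the right-hand argument above vanishes identically, so the displayed pairing is $0$ and \eqref{eq2p1} follows, for every $p$. Conversely, assume \eqref{eq2p1} for all $p\models n$; fix $j\ge 0$ and $a$ with $1\le a\le n$, put $m=n-a$, and let $D\in\Lambda^{=m}$ be $\BC_a^*\Delta_{h_j}h_n[X/M]-t^{a-1}\BB_a^*\Delta_{h_{j-1}}h_n[X/M]-\chi(a=1)\Delta_{h_j}h_{n-1}[X/M]$. Running the hypothesis through the equivalence for every $p=(a,p_2,\ldots,p_k)$ with $(p_2,\ldots,p_k)$ an arbitrary composition of $m$ (the empty one when $a=n$) shows $\LL\BC_{p_2}\cdots\BC_{p_k}1\scs D\RR_*=0$ for all such inner compositions. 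Since the symmetric functions $\BC_{c_1}\cdots\BC_{c_l}1$ over compositions $(c_1,\ldots,c_l)\models m$ span $\Lambda^{=m}$ and $\LL\cdot\scs\cdot\RR_*$ is non-degenerate there, we get $D=0$, which is \eqref{eq2p2} for this $j$ and $a$. (At the boundary $j=0$ one reads $h_{-1}=0$, hence $\Delta_{h_{-1}}=0$, and every step above is still valid.)

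The only genuinely non-formal inputs are fact (iii) -- the $q$-commutation $\BB_a\BC_b=q\,\BC_b\BB_a$, which is where the explicit plethystic definitions of $\BC_a$ and $\BB_a$ really enter and which pins down the exponent $q^{k-1}$ -- and, for the ``only if'' direction alone, the spanning of $\Lambda^{=m}$ by the $\BC$-words; everything else is bookkeeping between the Hall and $*$ scalar products. I expect extracting and verifying the commutation identity from the machinery of \cite{GXZ} to be the main point requiring care.
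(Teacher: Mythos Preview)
Your proof is correct and follows essentially the same route as the paper's: commute $\BB_{p_1}$ past the $\BC$'s to absorb the factor $q^{k-1}$, convert the Hall pairing with $e_n$ into the $*$-pairing with $h_n[X/M]$, use self-adjointness of $\Delta_{h_j}$ and the definitions of $\BC_a^*,\BB_a^*$, and then invoke the fact that the $\BC$-words span $\Lambda^{=m}$. The only cosmetic differences are that the paper cites \cite{HMZ} rather than \cite{GXZ} for the relation $q\BC_a\BB_b=\BB_b\BC_a$, and it justifies the spanning step by noting that for weakly decreasing $(p_2,\ldots,p_k)$ the polynomials $\BC_{p_2}\cdots\BC_{p_k}1$ are (up to scalar) Hall--Littlewood functions, hence a basis of $\Lambda^{=m}$; your formulation in terms of spanning over all compositions is an immediate consequence.
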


\begin{proof}
It is shown in \cite{HMZ} that the operators $\BC_a$ and $\BB_b$ satisfy the commutativity  relations 
$$
q\BC_a\BB_b\ses \BB_b\BC_a
\ess\ess (\hbox{for all $a,b\ge 1$})
$$
Using these identities, \eqref{eq2p1} becomes
\begin{align*}
\LL \Delta_{h_j} \BC_{p_1}\BC_{p_2}\cdots \BC_{p_k} 1\scs  e_n \RR 
&\ses
t^{p_1-1 }
\LL \Delta_{h_{j-1}}\BB_{p_1}  \BC_{p_2}\cdots \BC_{p_k}1\scs  
 e_{n} \RR 
 \\
 &\bigsp\bigsp
 \sps \chi(p_1=1)\LL \Delta_{h_{j}}\BC_{p_2}\cdots \BC_{p_k} 1\scs   e_{n-1} \RR. 
\end{align*}
Passing to $*$-scalar products, and using the identity in \eqref{eq1p29},
we can next rewrite \eqref{eq2p1} in the form
\begin{equation}
\LL \Delta_{h_j}\BC_{p_1}F[X]\scs  h_n^* \RR_* 
\ses
t^{p_1-1} \LL \Delta_{h_{j-1}}\BB_{p_1} F[X]\scs  
 h_n^* \RR_* 
 \sps \chi(p_1=1)\LL \Delta_{h_{j}}F[X]\scs   h_{n-1}^* 
 \RR_* 
\label{eq2p3}
\end{equation}
and the validity of this identity for every symmetric function  $F[X]$ 
that is homogeneous of degree $n-p_1$  is equivalent to \eqref{eq2p1} since when
$p_2,\ldots p_k$ are the parts of a partition the polynomials
$
\BC_{p_2}\cdots \BC_{p_k} 1
$
are essentially elements of the Hall-Littlewood basis. Now, since 
all the operators $\DD_F$ are self adjoint with respect to the $*$-scalar product, 
\eqref{eq2p3} in turn can be rewritten in the form
$$
\LL F[X]\scs  \BC_{p_1}^* \Delta_{h_j}h_n^* \RR_* 
\ses
t^{p_1-1} 
\LL  F[X]\scs  
\BB_{p_1}^* \Delta_{h_{j-1}} h_n^* \RR_* 
 \sps \chi(p_1=1)\LL F[X]\scs  \Delta_{h_{j}} h_{n-1}^* 
 \RR_* 
$$
and this identity (for all $p_1\ge 1$) is equivalent to \eqref{eq2p2} due to the arbitrariness of $F[X]$. This completes  our proof.
\end{proof}

Our next goal is to prove \eqref{eq2p2}.
To begin we have the following auxiliary identity.

\begin{prop} \label{prop:2p1}
\begin{equation}
\DD _{h_j}h_n\big[\tttt{X\over M} \big]
\ses \sum_{s=0}^jh_{j-s}[\tttt{1\over M} ](-1)^{s}
 \sum_{\nu\part s}{T_\nu^2\over w_\nu}
h_n\big[X(\tttt {1\over M}-B_\nu)\big].
\label{eq2p4}
\end{equation}
\end{prop}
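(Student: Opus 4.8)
The plan is to compute $\DD_{h_j}h_n[X/M]$ by expanding $h_j$ on the eigenvalue side of the operator in a basis that is adapted to the star scalar product, and then re-summing. First I would write $h_j[B_\mu]$ using the identity $B_\mu = \frac{1}{M}(1 - T_\mu\Pi_\mu^{-1}\cdot\text{stuff})$ — more precisely, I would use the elementary plethystic fact that $\DD_{h_j}$ can be expanded as $\DD_{h_j} = \sum_{s=0}^j h_{j-s}[1/M]\,\DD_{h_s\text{-part}}$ once one writes $B_\mu = \frac1M - \frac{D_\mu}{M}$ (recall $D_\mu = MB_\mu - 1$, so $B_\mu = \frac{1+D_\mu}{M}$), and hence $h_j[B_\mu] = h_j\big[\frac{1+D_\mu}{M}\big] = \sum_{s=0}^j h_{j-s}\big[\frac1M\big]\, h_s\big[\frac{D_\mu}{M}\big]$ by the addition formula for $h_j$. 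So the real content is to identify $\sum_{s}h_{j-s}[1/M]\,(\text{operator with eigenvalue }h_s[D_\mu/M])$ acting on $h_n[X/M]$, and then to show the $s$-th piece equals $(-1)^s\sum_{\nu\part s}\frac{T_\nu^2}{w_\nu}h_n[X(\frac1M - B_\nu)]$.

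The key step is therefore to prove the single-index identity
\[
h_s\big[\tttt{D_\mu/M}\big]\,h_n\big[\tttt{X/M}\big]
\]
re-expanded as a sum over $\nu\part s$. Here I would use Proposition~\ref{prop:1p1}: by part (a), $h_n[X/M]$ when paired appropriately against $\TH_\mu$ reproduces the operator $\DD$, and by the Macdonald–Cauchy formula \eqref{eq1p31} together with \eqref{eq1p20} one has expansions of $e_n$, $h_n[X/M]$ in the $\TH_\mu/w_\mu$ basis. The cleanest route: expand $h_n[X(\frac1M - B_\nu)]$ in the $\TH$-basis using Cauchy, compute $\sum_{\nu\part s}\frac{T_\nu^2}{w_\nu}\,(\text{that coefficient})$, and recognize the result — via the summation formula or via \eqref{eq1p14}/\eqref{eq1p15} — as $h_s[D_\mu/M]$ times the coefficient of $\TH_\mu$ in $h_n[X/M]$. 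Concretely, $\sum_{\nu\part s}\frac{T_\nu^2}{w_\nu}\TH_\nu[Y]\TH_\nu[Z]$-type sums collapse by \eqref{eq1p31} applied with the right plethystic arguments, since $\sum_\nu \frac{T_\nu\TH_\nu[Y]\TH_\nu[Z]}{w_\nu}$ is governed by the restricted Cauchy kernel and $T_\nu = e_s[B_\nu]$ for $\nu\part s$ lets one absorb one extra $T_\nu$ as a plethystic derivative / Pieri move. The $(-1)^s$ is bookkeeping coming from $h_s[-D_\nu]$ versus $e_s$-type terms, since $D_\nu = MB_\nu - 1$ carries a sign.

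The main obstacle I expect is exactly the re-summation $\sum_{\nu\part s}\frac{T_\nu^2}{w_\nu}\,\TH_\nu[\cdots]$: getting the two factors of $T_\nu$ and the $\Pi_\nu$-free denominator $w_\nu$ to cooperate with a Cauchy-type collapse requires choosing the plethystic arguments in \eqref{eq1p31} very carefully, and one likely needs the operator identity $\DD_{h_n}$ in the form $\DD_{h_n}F[X] = F[X]$-paired-against-$h_n[X/M]$ structure from \cite{GHT}, \cite{GXZ}. A secondary technical point is justifying the interchange of the finite $s$-sum with the $\mu$-expansion; this is harmless since everything is a finite linear combination of $\TH_\mu$ for $\mu\part n$. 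Once the $s=1$ case is checked by hand (where $h_1[D_\mu/M] = D_\mu/M = B_\mu - \frac1M$, matching $-\sum_{\nu\part 1}\frac{T_\nu^2}{w_\nu}h_n[X(\frac1M-B_\nu)] = -\frac{1}{w_{(1)}}h_n[X(\frac1M - 1)]$ after using $B_{(1)}=1$, $T_{(1)}=1$, $w_{(1)}=M$), the general pattern should propagate by the same Pieri/Cauchy identities. I would also double-check the boundary term $s=j$ and the degenerate cases $j=0$ (giving $h_n[X/M]$ itself) and $n=0$ as sanity checks before writing the full computation.
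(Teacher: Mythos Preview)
Your plan is correct and is exactly the paper's argument: expand $h_n[X/M]$ by \eqref{eq1p32}(c), split $h_j[B_\mu]=\sum_{s}h_{j-s}[1/M]\,h_s[D_\mu/M]$ via the addition formula, and then re-sum. The only place you are overcomplicating is the ``obstacle'' you flag about the two factors of $T_\nu$: no Pieri move and no identity from \cite{GHT} or \cite{GXZ} is needed. Simply apply \eqref{eq1p32}(c) a second time, with the plethystic alphabet $D_\mu$ in place of $X$, to get
\[
h_s\big[\tttt{D_\mu/M}\big]=\sum_{\nu\vdash s}\frac{T_\nu\,\TH_\nu[D_\mu]}{w_\nu}
= \sum_{\nu\vdash s}\frac{T_\nu^2}{w_\nu}\cdot\frac{\TH_\nu[D_\mu]}{T_\nu},
\]
and then reciprocity \eqref{eq1p14} swaps $\mu\leftrightarrow\nu$ at the cost of $(-1)^{n-s}$ and a factor $1/T_\mu$, which cancels the $T_\mu$ already sitting in $h_n[X/M]=\sum_\mu T_\mu\TH_\mu/w_\mu$. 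Cauchy \eqref{eq1p31} then collapses the $\mu$-sum to $e_n[X(B_\nu-\tttt{1/M})]=(-1)^n h_n[X(\tttt{1/M}-B_\nu)]$, and the signs combine to the stated $(-1)^s$. Identity \eqref{eq1p15} plays no role here.
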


\begin{proof}
Using \eqref{eq1p32} c) and the definition of the operator $\DD_{h_j}$ we get
\begin{align*}
\Delta_{h_{j}}h_{n}[\tttt{X\over M}]
&\ses \sum_{\mu\part n}{T_\mu\TH_\mu[X]\over w_\mu}h_{j}[B_\mu]
\\
&\ses \sum_{\mu\part n}{T_\mu\TH_\mu\over w_\mu}
h_{j}\big[\tttt{MB_\mu-1\over M}+\tttt{1\over M}\big]
\\
&\ses \sum_{\mu\part n}{T_\mu\TH_\mu[X]\over w_\mu}
\sum_{s=0}^jh_{j-s}\big[\tttt{1\over M}\big]h_{s}\big[\tttt{MB_\mu-1\over M}\big]
\\
 &\ses \sum_{s=0}^jh_{j-s}\big[\tttt{1\over M}\big]
\sum_{\mu\part n}{T_\mu\TH_\mu[X]\over w_\mu}
\, h_{s}\big[\tttt{MB_\mu-1\over M}\big]
\\
 &\ses \sum_{s=0}^jh_{j-s}\big[\tttt{1\over M}\big]
\sum_{\mu\part n}{T_\mu\TH_\mu[X]\over w_\mu}
\,\sum_{\nu\part s}{T_\nu\TH_\nu[MB_\mu-1]\over w_\nu}
\\
 &\ses \sum_{s=0}^jh_{j-s}\big[\tttt{1\over M}\big]
\sum_{\nu\part s}{T_\nu^2\over w_\nu}
\sum_{\mu\part n}{T_\mu\TH_\mu[X]\over w_\mu}
\,{\TH_\nu[MB_\mu-1]\over T_\nu}
\\ (\hbox{by the reciprocity in \eqref{eq1p14}})
 &\ses \sum_{s=0}^jh_{j-s}\big[\tttt{1\over M}\big]
\sum_{\nu\part s}{T_\nu^2\over w_\nu}(-1)^{n-s}
\sum_{\mu\part n}{ \TH_\mu[X]\TH_\mu[MB_\nu-1]\over w_\mu}
\\
 (\hbox{by \eqref{eq1p31})})&\ses 
\sum_{s=0}^jh_{j-s}\big[\tttt{1\over M}\big]
\sum_{\nu\part s}{T_\nu^2\over w_\nu}(-1)^{n-s}
e_n\big[X(B_\nu - \tttt{1\over M} )\big]
\end{align*}
This gives \eqref{eq2p4} since $e_n\big[ X(B_\nu-\tttt {1\over M})\big]=(-1)^n
h_n\big[X(\tttt {1\over M}-B_\nu)\big]$.
\end{proof}

It is good to keep in mind that in particular we also have
\begin{equation}
\Delta_{h_{j}}h_{n-1}[\tttt{X\over M}]\ses 
\sum_{s=0}^j(-1)^s h_{j-s}\big[\tttt{1\over M}\big]
\sum_{\nu\part s}{T_\nu^2\over w_\nu} 
h_{n-1}\big[X(\tttt {1\over M}-B_\nu)\big]
\label{eq2p5}
\end{equation}
\begin{equation}
\Delta_{h_{j-1}}h_{n}[\tttt{X\over M}]\ses 
\sum_{s=0}^{j-1}(-1)^sh_{j-1-s}\big[\tttt{1\over M}\big]
\sum_{\nu\part s}{T_\nu^2\over w_\nu} 
h_n\big[X(\tttt {1\over M}-B_\nu)\big]
\label{eq2p6}
\end{equation}

Next we have

\begin{prop}\label{prop:2p2}
\begin{align}
 \BC^*_a\DD _{h_j}&h_n\big[\tttt{X\over M} \big] 
 \sms \chi(a=1)\DD _{h_j}h_{n-1}\big[\tttt{X\over M} \big]\ses \label{eq2p7}\\
&\sps 
t^{a-1} \sum_{s=0}^{j-1}h_{j-1-s}[\tttt{1\over M} ](-1)^{s}
\sum_{\tau\part s}{T_\tau^2\over w_\tau}
\sum_{v=a}^{n}
h_{v-a}\big[ \tttt{-X\over 1-t }\big]
h_{n-v}\big[ X (\tttt {1\over M}-B_\tau) \big]
h_{v}\big[1-MB_\tau  \big]\nonumber
\end{align}
\end{prop}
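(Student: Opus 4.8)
The plan is to obtain \eqref{eq2p7} by applying the operator $\BC_a^*$ to the explicit formula for $\DD_{h_j}h_n[\tfrac XM]$ furnished by Proposition \ref{prop:2p1}, splitting off the $a=1$ contribution, and comparing with the companion expansions \eqref{eq2p5}, \eqref{eq2p6}. In other words, Proposition \ref{prop:2p2} should be read as ``$\BC_a^*$ of Proposition \ref{prop:2p1}, then simplify''.

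First I would record a workable closed form for $\BC_a^*$. Starting from the definition \eqref{eqIIp12} of $\BC_a$ and the $*$-adjointness dictionary \eqref{eq1p26}--\eqref{eq1p29} (so that $\BC_a^*=\phi^{-1}\om\,\BC_a^{\dagger}\,\om\,\phi$, with $\BC_a^{\dagger}$ the Hall-scalar-product adjoint of $\BC_a$ and $\phi f[X]=f[MX]$), one expands $F[X-\tfrac{1-1/q}{z}]$ by the coproduct $F[X+Y]=\sum_k (h_k^\perp F)[X]\,h_k[Y]$, passes to Hall-adjoints using that multiplication by $h_m$ is adjoint to $h_m^\perp$, and conjugates back by $\om$ and $\phi$. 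The outcome is an operator of exactly the shape of $\widetilde\BB_a$ in \eqref{eqIIp16}: it sends $F[X]$ to a scalar times $F$ evaluated at a shifted alphabet $X+(\text{scalar})\,z$, times an $\OM$-factor in the auxiliary variable $z$, with the coefficient of $z^a$ extracted — and the scalars that occur carry exactly the $t^{a-1}$ and the $\tfrac1{1-t}$ seen in \eqref{eq2p7}. Getting the signs right here — tracking the $\eee$ and $-^-$ conventions of this section and the interplay of $\om$ with $\phi$ — is delicate but routine bookkeeping.

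Next, since $\BC_a^*$ is linear and acts only on the $X$-dependence, I would apply it termwise to Proposition \ref{prop:2p1}, reducing everything to $\BC_a^* h_n\big[X(\tfrac1M-B_\nu)\big]$. Writing the shifted-alphabet evaluation via $h_n[A+B]=\sum_k h_k[A]h_{n-k}[B]$ peels off the part of $h_n$ that sees the shift: the surviving factor is $h_{n-v}\big[X(\tfrac1M-B_\nu)\big]$, the $\OM$-factor contributes $h_{v-a}\big[\tfrac{-X}{1-t}\big]$, and the peeled-off part contributes an elementary symmetric function in the constant alphabet $M(\tfrac1M-B_\nu)=1-MB_\nu$ which, after applying $\om e_m=h_m$, becomes the factor $h_v[1-MB_\nu]$; collecting the coefficient of $z^a$ produces the single sum $\sum_{v=a}^n$ of \eqref{eq2p7}. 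Renaming $\nu\to\tau$ then matches the right-hand side term by term.

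The step I expect to be the main obstacle is reconciling the summation index: Proposition \ref{prop:2p1} carries the range $0\le s\le j$ with weight $h_{j-s}[\tfrac1M]$, whereas \eqref{eq2p7} carries $0\le s\le j-1$ with weight $h_{j-1-s}[\tfrac1M]$. The drop by one must be produced either (a) by the extra ``$\tfrac1M$'' residing in the $\OM$-factor of $\BC_a^*$ merging with the $h_{j-s}[\tfrac1M]$ of Proposition \ref{prop:2p1} and lowering its index after re-summation, or (b) — if one instead feeds $\BC_a^*$ the Macdonald-expanded form $\sum_\mu\tfrac{T_\mu\TH_\mu[X]}{w_\mu}h_j[B_\mu]$ and collapses the resulting $\mu$-sums with the Pieri summation formulas \eqref{eq1p22}--\eqref{eq1p23} — by the ``$h_j[B_\mu]\to h_{j-1}[\,\cdot\,]$'' drop built into those formulas. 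In either route one must then check that exactly the leftover pieces — the $s=j$ boundary term, and, when $a=1$, the contribution of the constant term of the $\OM$-factor (a piece with one fewer power of $X$, hence of degree $n-1$) — are cancelled by the subtracted $\chi(a=1)\DD_{h_j}h_{n-1}[\tfrac XM]$, which is precisely where \eqref{eq2p5} is used. Verifying this cancellation and the attendant passage from the prefactor $(-1/q)^{a-1}$ coming out of $\BC_a$ to the $t^{a-1}$ appearing in \eqref{eq2p7} is, I expect, where the real work lies.
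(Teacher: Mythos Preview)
Your overall plan — apply $\BC_a^*$ termwise to the expansion of Proposition \ref{prop:2p1} — is the same starting point the paper uses, and your formula for $\BC_a^* h_n\!\big[X(\tfrac1M-B_\nu)\big]$ is essentially right. But you have misidentified what comes out, and because of that you have missed the entire mechanism that produces the index drop $j\to j-1$, the prefactor $t^{a-1}$, and the $\chi(a=1)$ correction.

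Concretely: when you peel off the shift in $h_n\!\big[(X-\tfrac{\eee M}{z})(\tfrac1M-B_\nu)\big]$ you do \emph{not} get $h_v[1-MB_\nu]$. What emerges (after tracking the $\eee$ and the $z^{-r}$) is
\[
-\sum_{r=a}^n \tfrac{1}{q^{r-1}}\, h_{r-a}\!\big[\tfrac{-X}{1-t}\big]\, h_{n-r}\!\big[X(\tfrac1M-B_\nu)\big]\, h_r\!\big[MB_\nu-1\big],
\]
and $h_r[MB_\nu-1]$ is not $h_r[1-MB_\nu]$ (they differ by $\om$, not by a sign). Neither of your proposed routes for the $j\to j-1$ shift works: there is no $\tfrac1M$ in the $\OM$-factor of $\BC_a^*$ to merge with $h_{j-s}[\tfrac1M]$, and the paper does not go back to the Macdonald-expanded form.

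What actually happens is that the factor $h_r[MB_\nu-1]$ is rewritten via the summation identity \eqref{eq1p22} as $(tq)^{r-1}\sum_{\tau\to\nu} Mc_{\nu\tau}(T_\nu/T_\tau)^{r-1}-\chi(r=1)$. This single move does three things at once: (i) the $\chi(r=1)$ correction, summed back over $s$ and $\nu$, reproduces exactly $\chi(a=1)\DD_{h_j}h_{n-1}[\tfrac XM]$ via \eqref{eq2p5}; (ii) the factor $(tq)^{r-1}/q^{r-1}=t^{r-1}=t^{a-1}\cdot t^{r-a}$ supplies the $t^{a-1}$ (the $t^{r-a}$ is absorbed into $h_{r-a}[\tfrac{-tX}{1-t}]$); (iii) the passage from $\nu\vdash s$ to the predecessor $\tau\vdash s-1$ is what shifts the outer sum to $0\le s\le j-1$. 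One then writes $B_\nu=B_\tau+T_\nu/T_\tau$, expands $h_{n-r}$, and sums over $\nu\leftarrow\tau$ using the companion identity \eqref{eq1p23}, which is where the factor $h_v[1-MB_\tau]$ finally appears; a last telescoping of $\sum_{r=a}^v h_{v-r}[-X]h_{r-a}[\tfrac{-tX}{1-t}]=h_{v-a}[\tfrac{-X}{1-t}]$ gives \eqref{eq2p7}. In short, both Pieri summation formulas \eqref{eq1p22} and \eqref{eq1p23} are essential, and they act on the internal $h_r[MB_\nu-1]$ and on the $\nu$-sum, not on a Macdonald expansion as you suggest.
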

\begin{proof}
The identity in \eqref{eq2p4}  gives
\begin{equation}
\BC^*_a\DD _{h_j}h_n\big[\tttt{X\over M} \big]
\ses \sum_{s=0}^j h_{j-s}[\tttt{1\over M} ](-1)^{s}
 \sum_{\nu\part s}{T_\nu^2\over w_\nu}
\BC^*_ah_n\big[X(\tttt {1\over M}-B_\nu)\big].
\label{eq2p8}
\end{equation}
Now it was shown in \cite{GXZ} that for all $P[X]\in \La$ we have
$$
\BC_a^* P[X]\ses(\tttt{ -1\over q})^{a-1} 
P\big[X-\tttt{\eee M\over z}  \big]\OM\big[\tttt{-\eee zX\over q(1-t)} \big]
\Big|_{z^{-a}}
$$
This gives
\begin{align*}
(-q)^{a-1} \BC^*_a
h_n\big[X(\tttt {1\over M}-B_\nu)\big]
&\ses
h_n\big[(X-\tttt{\eee M\over z}) (\tttt {1\over M}-B_\nu) \big]
\OM\big[\tttt{-\eee zX\over q(1-t)} \big]
\Big|_{z^{-a}}
\\
&\ses
\sum_{r=0}^n
h_{n-r}\big[ X (\tttt {1\over M}-B_\nu) \big]
h_r\big[\tttt{-\eee M\over z} (\tttt {1\over M}-B_\nu) \big]
\OM\big[\tttt{-\eee zX\over q(1-t)} \big]
\Big|_{z^{-a}}
\\
&\ses
\sum_{r=0}^n
h_{n-r}\big[ X (\tttt {1\over M}-B_\nu) \big]
(-1)^rh_r\big[   M B_\nu-1 \big]
\OM\big[\tttt{-\eee zX\over q(1-t)} \big]
\Big|_{z^{r-a}}
\\
&\ses
\sum_{r=0}^n
h_{n-r}\big[ X (\tttt {1\over M}-B_\nu) \big]
(-1)^rh_r\big[   M B_\nu-1 \big]
h_{r-a}\big[\tttt{-\eee X\over q(1-t)} \big]
\\
&\ses
\sum_{r=a}^n
h_{n-r}\big[ X (\tttt {1\over M}-B_\nu) \big]
\tttt{(-1)^a\over q^{r-a}}h_r\big[   M B_\nu-1 \big]
h_{r-a}\big[\tttt{ - X\over 1-t } \big]
\end{align*}
or better
$$
\BC^*_a
h_n\big[X(\tttt {1\over M}-B_\nu)\big]
\ses \sms \sum_{r=a}^n
h_{n-r}\big[ X (\tttt {1\over M}-B_\nu) \big]
\tttt{1\over q^{r-1}}h_r\big[   M B_\nu-1 \big]
h_{r-a}\big[\tttt{ - X\over 1-t } \big].
$$
and the last sum in \eqref{eq2p8} becomes
\begin{align}
  \sum_{\nu\part s}{T_\nu^2\over w_\nu}\BC^*_a
h_n\big[X(\tttt {1\over M}-B_\nu)\big]
&=
\sms \sum_{\nu\part s}{T_\nu^2\over w_\nu}\sum_{r=a}^n
h_{n-r}\big[ X (\tttt {1\over M}-B_\nu) \big]
\tttt{1\over q^{r-1}}h_r\big[   M B_\nu-1 \big]
h_{r-a}\big[\tttt{  -X\over 1-t } \big]\nonumber
\\
&=
\sms 
\sum_{r=a}^n\tttt{1\over q^{r-1}}h_{r-a}\big[\tttt{  -X\over 1-t } \big]
\sum_{\nu\part s}{T_\nu^2\over w_\nu}
h_{n-r}\big[ X (\tttt {1\over M}-B_\nu) \big]
h_r\big[   M B_\nu-1 \big]
\label{eq2p9}
\end{align}
Using the summation identity in \eqref{eq1p22} in the form 
$$
h_{r}\left[ MB_\nu -1\right]\ses 
(tq)^{r-1} \sum_{\tau\RA \nu }Mc_{\nu\tau}(\tttt{T_\nu\over T_\tau})^{r-1} 
\sms \chi(r=1) 
$$
we get
\begin{align*}
  \sum_{\nu\part s}{T_\nu^2\over w_\nu}\BC^*_a
h_n\big[X(\tttt {1\over M}-B_\nu)\big]
&=
\sms 
\sum_{r=a}^n\tttt{1\over q^{r-1}}h_{r-a}\big[\tttt{  -X\over 1-t } \big]
\sum_{\nu\part s}{T_\nu^2\over w_\nu}
h_{n-r}\big[ X (\tttt {1\over M}-B_\nu) \big]
(tq)^{r-1} \sum_{\tau\RA \nu }Mc_{\nu\tau}(\tttt{T_\nu\over T_\tau})^{r-1} 
\\
&\bigsp\bigsp
\sps 
\sum_{r=a}^n\tttt{1\over q^{r-1}}h_{r-a}\big[\tttt{  -X\over 1-t } \big]
\sum_{\nu\part s}{T_\nu^2\over w_\nu}
h_{n-r}\big[ X (\tttt {1\over M}-B_\nu) \big]
\chi(r=1)
\\
&=
\sms 
\sum_{r=a}^n
\tttt{t^{r-1}}h_{r-a}\big[\tttt{  -X\over 1-t } \big]
\sum_{\nu\part s}{T_\nu^2\over w_\nu}
h_{n-r}\big[ X (\tttt {1\over M}-B_\nu) \big]
 \sum_{\tau\RA \nu }Mc_{\nu\tau}(\tttt{T_\nu\over T_\tau})^{r-1} 
\\
&\bigsp\bigsp\bigsp\bigsp
\sps 
\chi(a=1)
\sum_{\nu\part s}{T_\nu^2\over w_\nu}
h_{n-1}\big[ X (\tttt {1\over M}-B_\nu) \big]
\\
&=
\sms t^{a-1}
\sum_{r=a}^n
h_{r-a}\big[\tttt{  -tX\over 1-t } \big]
\sum_{\tau\part s-1}{T_\tau^2\over w_\tau}
\sum_{\nu\LA \tau }
h_{n-r}\big[ X (\tttt {1\over M}-B_\nu) \big]
 M\tttt{w_\tau\over w_\nu}c_{\nu\tau}(\tttt{T_\nu\over T_\tau})^{r+1} 
\\
&\bigsp\bigsp\bigsp\bigsp
\sps 
\chi(a=1)
\sum_{\nu\part s}{T_\nu^2\over w_\nu}
h_{n-1}\big[ X (\tttt {1\over M}-B_\nu) \big]
\end{align*}
\vskip -.1in

\noindent
Using \eqref{eq1p24} and the fact that there are no partitions of size $-1$,
\eqref{eq2p8} becomes
\begin{align*}
\BC^*_a\DD _{h_j}h_n\big[\tttt{X\over M} \big]
&\ses
\sms t^{a-1} \sum_{s=1}^jh_{j-s}[\tttt{1\over M} ](-1)^{s}
\sum_{r=a}^n
h_{r-a}\big[\tttt{  -tX\over 1-t } \big]
\sum_{\tau\part s-1}{T_\tau^2\over w_\tau}
\sum_{\nu\LA \tau }
h_{n-r}\big[ X (\tttt {1\over M}-B_\nu) \big]
d_{\nu\tau}(\tttt{T_\nu\over T_\tau})^{r+1} 
\\
&\bigsp\bigsp\bigsp\bigsp
\sps 
\chi(a=1)\sum_{s=0}^jh_{j-s}[\tttt{1\over M} ](-1)^{s}
\sum_{\nu\part s}{T_\nu^2\over w_\nu}
h_{n-1}\big[ X (\tttt {1\over M}-B_\nu) \big]
\end{align*}

\noindent and \eqref{eq2p5} gives 
\begin{align}
 \BC^*_a\DD _{h_j}h_n\big[\tttt{X\over M} \big] 
&= 
- t^{a-1} \sum_{s=1}^jh_{j-s}[\tttt{1\over M} ](-1)^{s}
\sum_{r=a}^n
h_{r-a}\big[\tttt{  -tX\over 1-t } \big]
\sum_{\tau\part s-1}{T_\tau^2\over w_\tau}
\sum_{\nu\LA \tau }
h_{n-r}\big[ X (\tttt {1\over M}-B_\nu) \big]
d_{\nu\tau}(\tttt{T_\nu\over T_\tau})^{r+1} \nonumber
\\
&\bigsp\bigsp \bigsp\bigsp\bigsp\bigsp\bigsp\sps
\chi(a=1)\DD _{h_j}h_{n-1}\big[\tttt{X\over M} \big]
\label{eq2p10}
\end{align}
Since $B_\nu=B_\tau+{T_\nu\over T_\tau}$ 
 we derive that
\begin{align*}
\sum_{\nu\LA \tau }
h_{n-r}\big[ X (\tttt {1\over M}-B_\nu) \big]
d_{\nu\tau}(\tttt{T_\nu\over T_\tau})^{r+1} 
&\ses
\sum_{u=0}^{n-r}
h_{n-r-u}\big[ X (\tttt {1\over M}-B_\tau) \big]h_{u}\big[ -X \big]
\sum_{\nu\LA \tau }d_{\nu\tau}(\tttt{T_\nu\over T_\tau})^{r+u+1}
\\
&\ses
\sum_{v=r}^{n}
h_{n-v}\big[ X (\tttt {1\over M}-B_\tau) \big]h_{v-r}\big[ -X \big]
\sum_{\nu\LA \tau }d_{\nu\tau}(\tttt{T_\nu\over T_\tau})^{v+1}
\end{align*}
Using the summation formula in \eqref{eq1p23} in the form
\def \mur {\leftarrow}
$$
\sum_{\mu\mur\nu}d_{\mu\nu}(q,t)\, (T_\mu/T_\nu)^k\ses 
\begin{cases}
 h_{k-1}\big[1-MB_\nu  \big] &\hbox{ if }k\geq 1\\
1 & if k=0\ess .
\end{cases}$$
together with the fact that  $v\ge r$ and  in \eqref{eq2p10} we have $r\ge a\ge 1$ we 
obtain
$$
\sum_{\nu\LA \tau }d_{\nu\tau}(\tttt{T_\nu\over T_\tau})^{v+1}
\ses h_{v}\big[1-MB_\tau  \big]
$$
and \eqref{eq2p10} becomes
\begin{align*}
\BC^*_a\DD _{h_j}&h_n\big[\tttt{X\over M} \big] 
\sms \chi(a=1)\DD _{h_j}h_{n-1}\big[\tttt{X\over M} \big] \\
&= 
- t^{a-1} \sum_{s=1}^jh_{j-s}[\tttt{1\over M} ](-1)^{s}
\sum_{r=a}^n
h_{r-a}\big[\tttt{  -tX\over 1-t } \big]
\sum_{\tau\part s-1}\Big({T_\tau^2\over w_\tau}\\
&\hskip .5in \sum_{v=r}^{n}
h_{n-v}\big[ X (\tttt {1\over M}-B_\tau) \big]h_{v-r}\big[ -X \big]
h_{v}\big[1-MB_\tau  \big]\Big)\\
&= 
- t^{a-1} \sum_{s=1}^jh_{j-s}[\tttt{1\over M} ](-1)^{s}
\sum_{\tau\part s-1}{T_\tau^2\over w_\tau}
\sum_{v=a}^{n}\sum_{r=a}^v
\Big(h_{v-r}\big[ \tttt{-(1-t)X\over 1-t }\big]\\
&\hskip .5in h_{r-a}\big[\tttt{  -tX\over 1-t } \big]
h_{n-v}\big[ X (\tttt {1\over M}-B_\tau) \big]
h_{v}\big[1-MB_\tau  \big]\Big)\\
&= 
- t^{a-1} \sum_{s=1}^jh_{j-s}[\tttt{1\over M} ](-1)^{s}
\sum_{\tau\part s-1}{T_\tau^2\over w_\tau}
\sum_{v=a}^{n}
h_{v-a}\big[ \tttt{-X\over 1-t }\big]
h_{n-v}\big[ X (\tttt {1\over M}-B_\tau) \big]
h_{v}\big[1-MB_\tau  \big]
\end{align*}
 This best rewritten in the form
\begin{align*}
 \BC^*_a\DD _{h_j}&h_n\big[\tttt{X\over M} \big] 
 \sms \chi(a=1)\DD _{h_j}h_{n-1}\big[\tttt{X\over M} \big]\ses \\
&= 
t^{a-1} \sum_{s=0}^{j-1}h_{j-1-s}[\tttt{1\over M} ](-1)^{s}
\sum_{\tau\part s}{T_\tau^2\over w_\tau}
\sum_{v=a}^{n}
h_{v-a}\big[ \tttt{-X\over 1-t }\big]
h_{n-v}\big[ X (\tttt {1\over M}-B_\tau) \big]
h_{v}\big[1-MB_\tau  \big]
\end{align*}
proving \eqref{eq2p7} and completing our proof of Proposition \ref{prop:2p2}.
\end{proof}

Let us now work on $\BB^*_{a} 
 \Delta_{h_{j-1}}h_{n}[\tttt{X\over M}]
$. 
Here we use the identity in \eqref{eq2p6}, that is  
$$
\DD _{h_{j-1}}h_n\big[\tttt{X\over M} \big]
\ses \sum_{s=0}^jh_{j-1-s}[\tttt{1\over M} ](-1)^{s}
 \sum_{\nu\part s}{T_\nu^2\over w_\nu}
h_n\big[X(\tttt {1\over M}-B_\nu)\big].
$$
and the  identity
$$
\BB_a^* P[X]\ses P\big[X+\tttt{M\over z}  
\big]\OM\big[\tttt{-zX\over 1-t} \big]
\Big|_{z^{-a}}
$$
 (established in \cite{GXZ})  that gives  the action of the operators  $\BB_a^*$
to obtain
\begin{align*}
\BB_a^* \Delta_{h_{j-1}}h_{n}[\tttt{X\over M}]
&\ses \sum_{s=0}^{j-1}(-1)^se_{j-1-s}\big[\tttt{1\over M}\big]
\sum_{\nu\part s}{T_\nu^2\over w_\nu} 
\BB_a^* h_n\big[X( \tttt{1\over M}-B_\nu  )\big]
\\
&\ses \sum_{s=0}^{j-1}(-1)^sh_{j-1-s}\big[\tttt{1\over M}\big]
\sum_{\nu\part s}{T_\nu^2\over w_\nu} 
h_n\big[(X+\tttt{M\over z})( \tttt{1\over M}-B_\nu  )
\big]\OM\big[\tttt{-zX\over 1-t} \big]
\Big|_{z^{-a}}
\\
&\ses \sum_{s=0}^{j-1}(-1)^sh_{j-1-s}\big[\tttt{1\over M}\big]
\sum_{\nu\part s}{T_\nu^2\over w_\nu} 
\sum_{r=0}^n
h_{n-r}\big[X( \tttt{1\over M}-B_\nu  )\big]
h_{r}\big[MB_\nu - 1 \big]
\OM\big[\tttt{-zX\over 1-t} \big]
\Big|_{z^{-a+r}}
\\
&\ses \sum_{s=0}^{j-1}(-1)^sh_{j-1-s}\big[\tttt{1\over M}\big]
\sum_{\nu\part s}{T_\nu^2\over w_\nu} 
\sum_{r=a}^n
h_{n-r}\big[X( \tttt{1\over M}-B_\nu  )\big]
h_{r}\big[1-MB_\nu \big]
h_{r-a}\big[\tttt{-X\over 1-t} \big]
\end{align*}
and this may be rewritten as
$$
\BB_a^* \Delta_{h_{j-1}}h_{n}[\tttt{X\over M}]
\ses \sum_{s=0}^{j-1}(-1)^sh_{j-1-s}\big[\tttt{1\over M}\big]
\sum_{\nu\part s}{T_\nu^2\over w_\nu} 
\sum_{r=a}^n h_{r-a}\big[\tttt{-X\over 1-t} \big]
h_{n-r}\big[X( \tttt{1\over M}-B_\nu  )\big]
h_{r}\big[1-MB_\nu  \big]
$$
Comparing with the right hand side of \eqref{eq2p7} we see that we have established the identity
$$
t^{a-1} \BB_a^* \Delta_{h_{j-1}}h_{n}[\tttt{X\over M}]
\ses \BC^*_a\DD _{h_j}h_n\big[\tttt{X\over M} \big] 
 \sms \chi(a=1)\DD _{h_j}h_{n-1}\big[\tttt{X\over M} \big]
 $$
This completes our proof of \eqref{eq2p2} an consequently also the proof of Theorem \ref{th:IIp1}.

\end{section}
\vfill
\newpage

\begin{section}{The construction of the new dinv }

Let $\CAP(J,n)$ denote the collection of Parking functions on the $J+n\times J+n$ 
lattice square whose diagonal word is a shuffle of the two  words
$\EE_J=12\ldots J$ and $\EE_{J,n}=J+1\ldots J+n$ with   car $J+n$ in the (1,1) lattice square. In symbols
\begin{equation}
\CAP(J,n)\ses \big\{PF\in \CAP_{J+n}\,:\, \sig(PF)\in  \EE_J\sch \EE_{n-J}
\ssp \&\ssp J+n\in (1,1)  \big\}
\label{eq3p1}
\end{equation}

Before  we can proceed with our construction of the new dinv, we need
some preliminary observations about this family of parking functions.
To begin we should note that the condition that the diagonal word
be a shuffle of $12\cdots J$ with $J+1\cdots J+n$, together with the  
column increasing property of parking functions, forces the
columns of the Dyck path
\footnote {$\hskip -.07in\ ^{(*)}$}
{Where by the {\it length} of column $i$ of a Dyck path $D$ 
we refer to the number of NORTH steps of $D$ of abscissa $i$}
 supporting a $PF\in \CAP(J,n)$ to be of length $2$ at most.
 The reason for this is simple: as we read the cars of $PF$ to obtain $\sig(PF)$ from right to left by diagonals starting from the highest and ending with the lowest
the big cars ($J+1,\ldots ,J+n$) as well as the small cars 
($1,2,\ldots ,J$) will be  increasing.
Thus we will never see a big car on top of a big car nor a small car on top of a small car. So the only possibility is a big car on top of a small car,
i. e. columns of length $2$ at most as we asserted.

This yields an algorithm for constructing all the elements of the family 
$\CAP(J,n)$. Let us  denote by ``$red(PF)$'', and call it the ``{\it reduced 
tableau }'' of $PF$, the configuration obtained by replacing 
in a $PF\in \CAP(J,n)$ all big cars by a $2$ an all small cars by a $1$. 
We can simply obtain all the reduced tableaux of elements of $\CAP(J,n)$
by constructing first the family $\CD_{J,n}$ of  Dyck paths of length $n+J$ 
with no more than $J$ columns of length $2$ and all remaining columns of 
length $1$. Then for   each Dyck path $D\in \CD_{J,n}$
fill the cells adjacent  to the NORTH steps of each  column of length $2$ 
 by a $1$ under a $2$, then fill the columns of length $1$ by a $1$ or a $2$
 for a total of $J$ ones and $n$  twos. 
 
  Clearly each $PF\in \CAP(J,n)$
 can be uniquely reconstructed from its reduced tableau by replacing
 all the ones by $1,2,\ldots ,J$ and all the twos by $J+1\ldots J+n$ 
 by diagonals from right to left starting from the highest and ending with the lowest. It will  also be clear that we need only work with reduced tableaux 
 to construct our new dinv.  However,  being able to refer to the original cars
 will turn out to be more  convenient in some of our proofs. For this reason
 we will work with a $PF$ or its $red(PF)$  intercheangeably
 depending on the context.
 
 This given,  we have the following basic fact
\sas

\begin{prop}\label{prop:3p1}
For any 
$$
PF=\begin{bmatrix}
v_1 & v_2 &  \cdots  &  v_n  \\
u_1 &  u_2 & \cdots  &  u_n  \\
\end{bmatrix}
\in \CAP(J,n)
$$  
if we set
$
\{i_1 <i_2<\cdots <i_k \}
\ses \big\{i\in [1 ,J+n]  \, :\, v_i >J  \big\}
$
then the  vector 
$$
U_B(PF) \ses (u_{i_1},u_{i_2},\ldots ,u_{i_k})
$$
gives the area sequence of a Dyck path, which  here and after 
will be referred to as the Dyck path ``supporting'' the big cars of PF.
\end{prop}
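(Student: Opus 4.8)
The plan is to check directly that $U_B(PF)=(u_{i_1},\ldots,u_{i_k})$ satisfies the two conditions in \eqref{eqIIp2} that characterize area sequences of Dyck paths, namely $u_{i_1}=0$ and $0\le u_{i_l}\le u_{i_{l-1}}+1$ for $2\le l\le k$. The first is immediate: the supporting path of any parking function begins with a NORTH step in the leftmost column, so the car occupying cell $(1,1)$ is always $v_1$; since $PF\in\CAP(J,n)$ places the big car $J+n$ in $(1,1)$ and $J+n>J$, we get $v_1>J$, hence $i_1=1$ and $u_{i_1}=u_1=0$.

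For the inequality I would argue by contradiction, assuming $u_{i_l}\ge u_{i_{l-1}}+2$ for some $l\ge 2$. Writing $u_{i_l}-u_{i_{l-1}}=\sum_{m=i_{l-1}+1}^{i_l}(u_m-u_{m-1})$ and using $u_m-u_{m-1}\le 1$ from \eqref{eqIIp2}, since each summand is at most $1$ the sum can be $\ge 2$ only when at least two of the summands equal $1$; that is, there are at least two indices $m\in\{i_{l-1}+1,\ldots,i_l\}$ with $u_m=u_{m-1}+1$. Now recall that $u_m=u_{m-1}+1$ holds precisely when no EAST step separates the $(m-1)$-st and $m$-th NORTH steps, i.e. when these two NORTH steps lie in a common column of the supporting path. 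I would then invoke the structure of the tableaux in $\CAP(J,n)$ established above — every column has length at most $2$, and a length-$2$ column carries a small car below a big car — to rule out all but one such $m$: if $m-1=i_{l-1}$, the common column would have the big car $v_{i_{l-1}}$ at its bottom, impossible; hence $i_{l-1}<m-1<m\le i_l$, so $v_{m-1}$ is small, and if moreover $m<i_l$ then $v_m$ is small as well, producing a length-$2$ column with two small cars, again impossible. So $m=i_l$ is the only admissible index, contradicting the requirement of at least two. This forces $u_{i_l}\le u_{i_{l-1}}+1$, and together with $u_{i_1}=0$ the vector $U_B(PF)$ satisfies \eqref{eqIIp2} and is therefore the area sequence of a Dyck path.

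The one place requiring care is the bookkeeping in the contradiction step, where one must pin down that at least two ``$+1$''-increments are forced by the size assumption while at most one ``$+1$''-increment is compatible with the shape of a reduced tableau of $\CAP(J,n)$; everything else follows at once from the definitions and from the already-recorded description of the elements of $\CAP(J,n)$. A slightly more geometric variant of the same argument would instead track the $u$-values column by column from the $i_{l-1}$-st NORTH step up to the $i_l$-th: one passes through a (possibly empty) run of length-$1$ columns followed by a single final column of length at most $2$, across which $u$ never increases except for the lone unit gained inside that final length-$2$ column, which again gives $u_{i_l}\le u_{i_{l-1}}+1$. Either route settles the proposition.
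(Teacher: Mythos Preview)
Your proof is correct and is essentially the same argument as the paper's, just packaged as a contradiction via a telescoping sum rather than the paper's direct bound $u_j\le u_{i_{s-1}}$ for the intermediate indices $i_{s-1}<j<i_s$ followed by $u_{i_s}\le u_{i_s-1}+1$. Both hinge on the same structural fact (columns of length at most~$2$, with a small car below a big car in length-$2$ columns), and your ``geometric variant'' at the end is precisely the paper's route.
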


\begin{proof}
Since car $J+n$ is in the (1,1) lattice square it follows that $u_{i_1}=0$.
Thus we need only show that
$$
u_{i_s}\le u_{i_{s-1}}+1\ess\ess\ess \hbox{ for all $2\le s\le k$} 
$$
By definition $v_{i_{s-1}}$ and  $v_{i_{s}}$ are two successive big cars in $PF$
that means that for $i_{s-1}<j<i_s$ the car $v_j$ is small and thus, except perhaps 
for $j=i_s-1$, the car $v_j$ must be in a column of length $1$. In particular we see
that we must have $u_j\le u_{i_{s-1}}$ for,  the first  violation of this inequality 
would put a small car above a big car (for $j=i_{s-1}+1$) or a small car above a 
small car for $j>i_{s-1}+1$. This gives $u_{i_s} \le u_{i_s-1}+1$
as desired  with equality only if car $v_{i_s}$ is at the top of a column of
length $2$ and all the small cars in between $v_{i_{s-1}}$ and $v_{i_s}$
are in the same diagonal as $v_{i_{s-1}}$. 
\end{proof}
\sas

In view of this result we are now going to focus on the subfamilies   $\CAP_{J}(p)$
of $\CAP_{J,n}$ consisting of its elements whose big cars have a supporting Dyck path which hits the diagonal according to a given composition $p=(p_1,p_2,\ldots ,p_k)\models n$. Our goal is to construct a statistic ``{\it ndinv }'' which yields the equality.
\begin{equation}
\LL \DD_{h_J}\BC_{p_1}\BC_{p_1}\cdots \BC_{p_k} 1\scs e_n\RR \ses
\sum_{PF\in \CAP_{J}(p)} t^{area(PF)} q^{ndinv(PF)}
\label{eq3p2}
\end{equation}
But before we do this it may be good to experiment a little   by constructing some of these families.

By reversing the argument we used to prove  Proposition \ref{prop:3p1}, we can start by constructing all the Dyck paths with the given diagonal composition then ad
all the cars as required by the definition of a family. This is best illustrated by
examples. Say we start with with $p=(3,2)$. In this case there are only two 
possible Dyck paths as given below on the left .
$$
\hbox{ \includegraphics[width=2in]{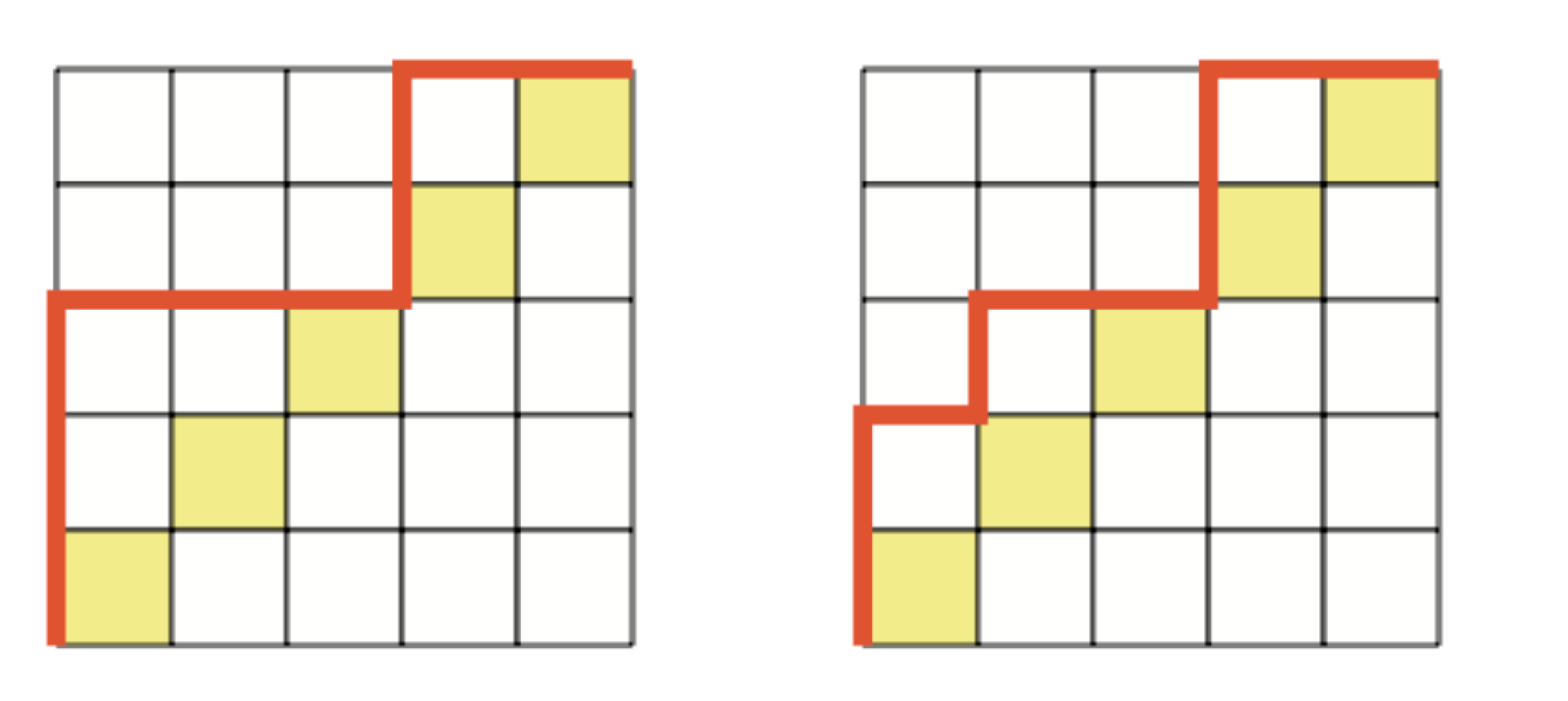}}  
\bigsp
\hbox{ \includegraphics[width=2in]{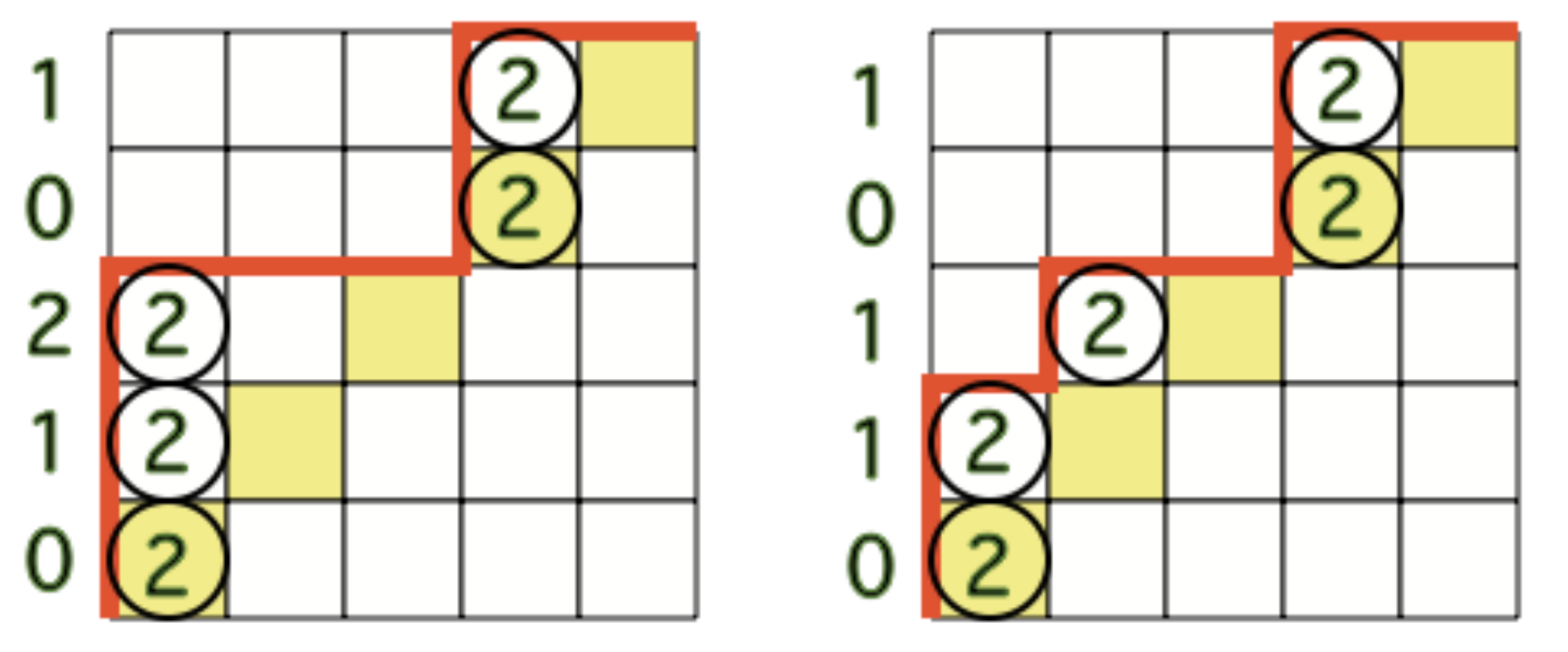}}  
$$
On the right we added the $2's$ and their corresponding diagonal numbers.
Now the least number of $1's$ we need to add to get a legal reduced diagram 
is $3$ for the first and 2 for the second as shown below
\begin{equation}
\vcenter{\hbox{ \includegraphics[width=1.5in]{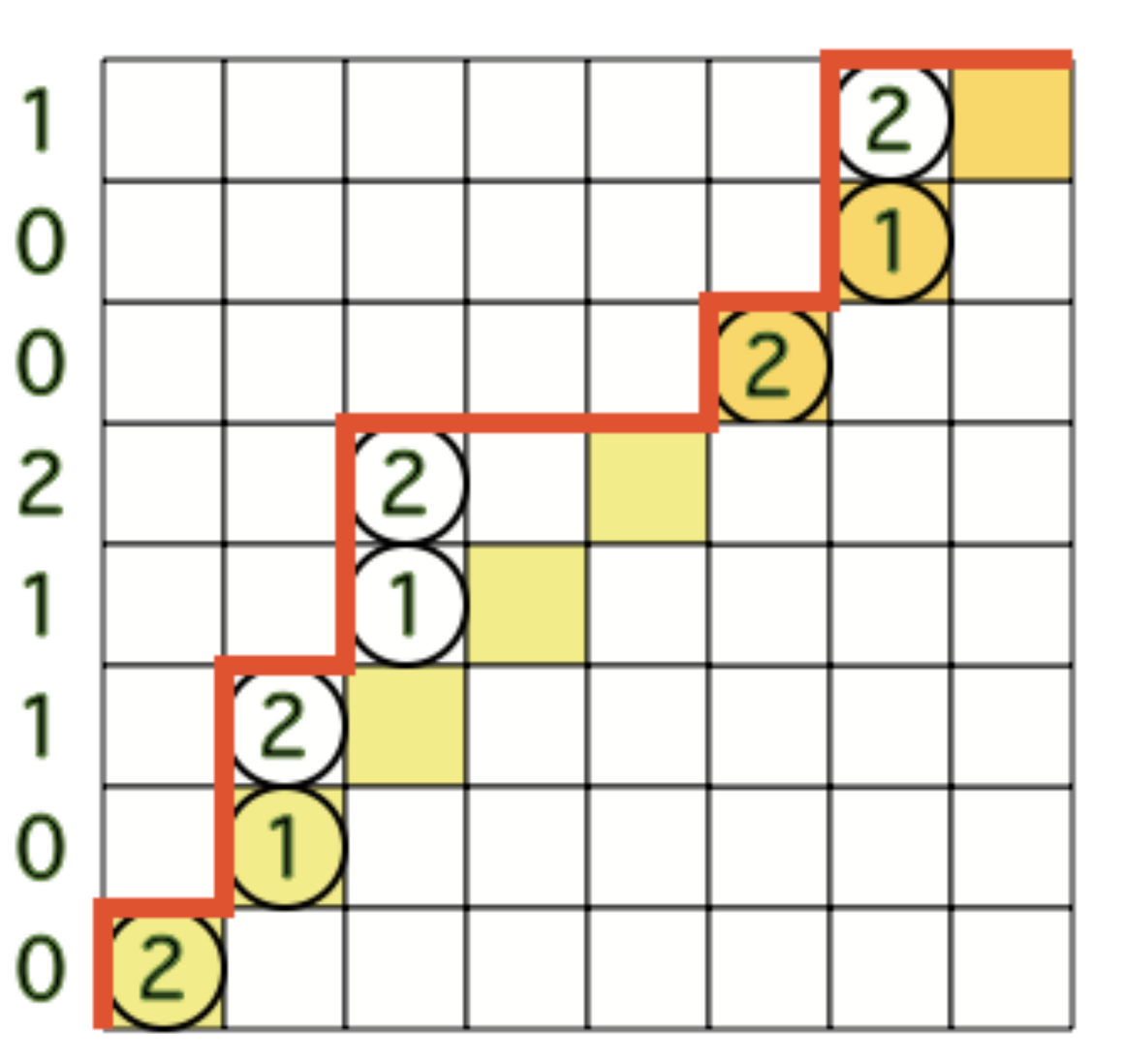}}}  
\bigsp
\vcenter{\hbox{ \includegraphics[width=1.3in]{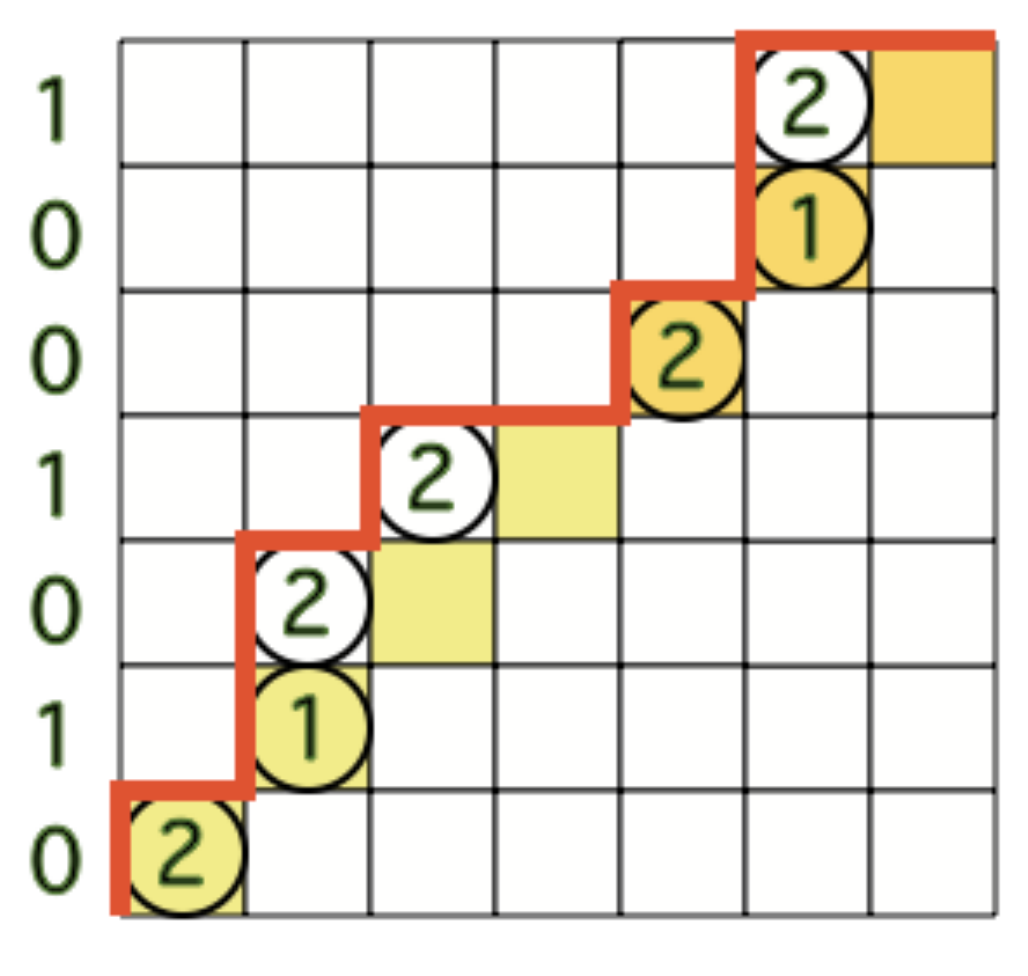}}}  
\label{eq3p4}
\end{equation}
Now a MAPLE computation yields the polynomials
\begin{equation}
\LL \DD_{h_2}\BC_3\BC_2  1\scs e_5\RR\ses t^3 q^4
\label{eq3p5}
\end{equation}
and
\begin{equation}
\LL \DD_{h_3}\BC_3\BC_2  1\scs e_5\RR\ses
t^3(q^4+q^5+q^6+q^7+q^8)+t^4(q^4+q^5+q^6)+t^5 q^4~.
\label{eq3p6}
\end{equation}

\vfill
To compute the classical weight $t^{area}q^{dinv}$ it is better to 
have a look at the non-reduced versions of the two tableau above.
Namely
\begin{equation}
\vcenter{\hbox{ \includegraphics[width=1.5in]{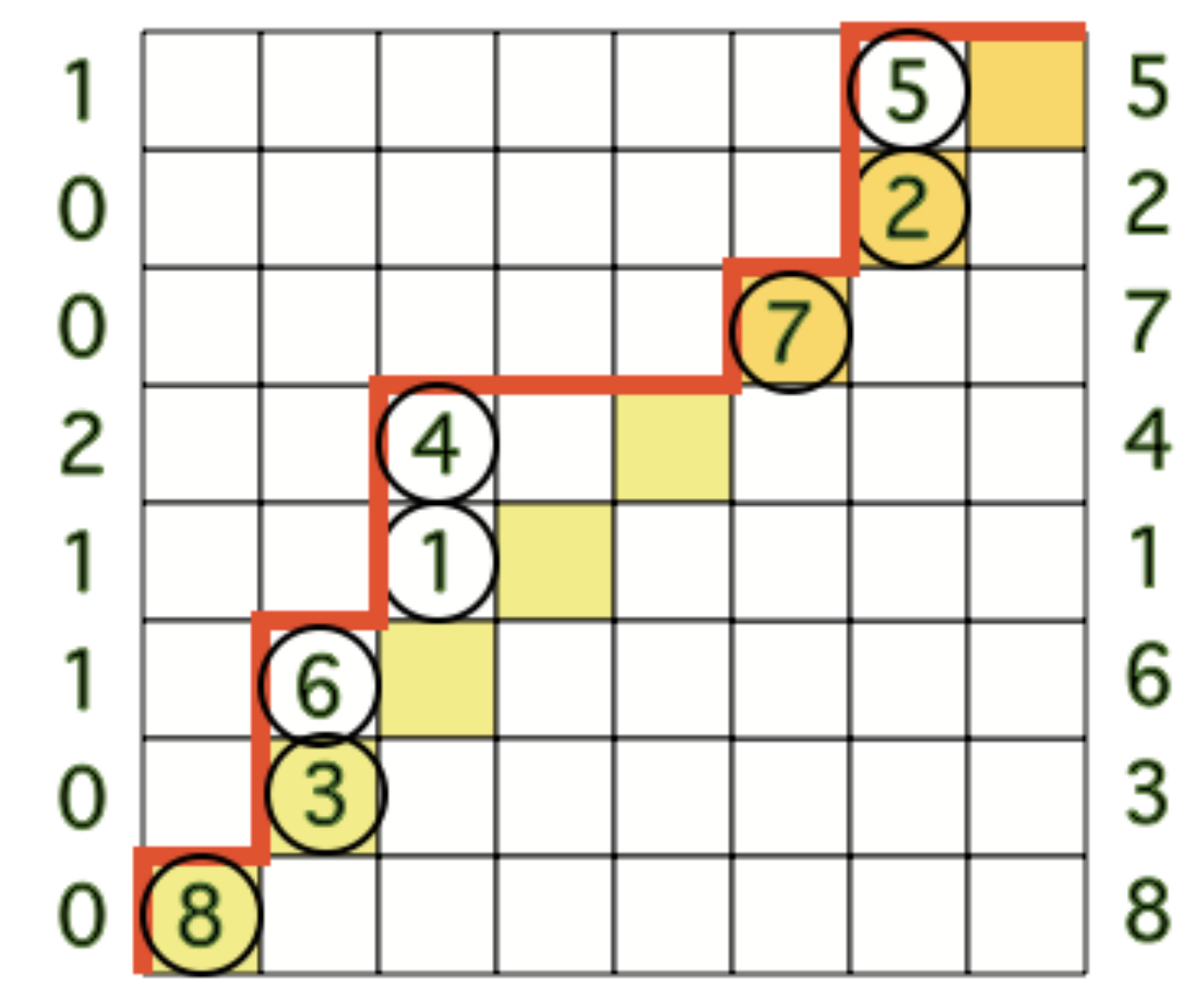}}}  
\bigsp \hbox{and} \bigsp
\vcenter{\hbox{ \includegraphics[width=1.3in]{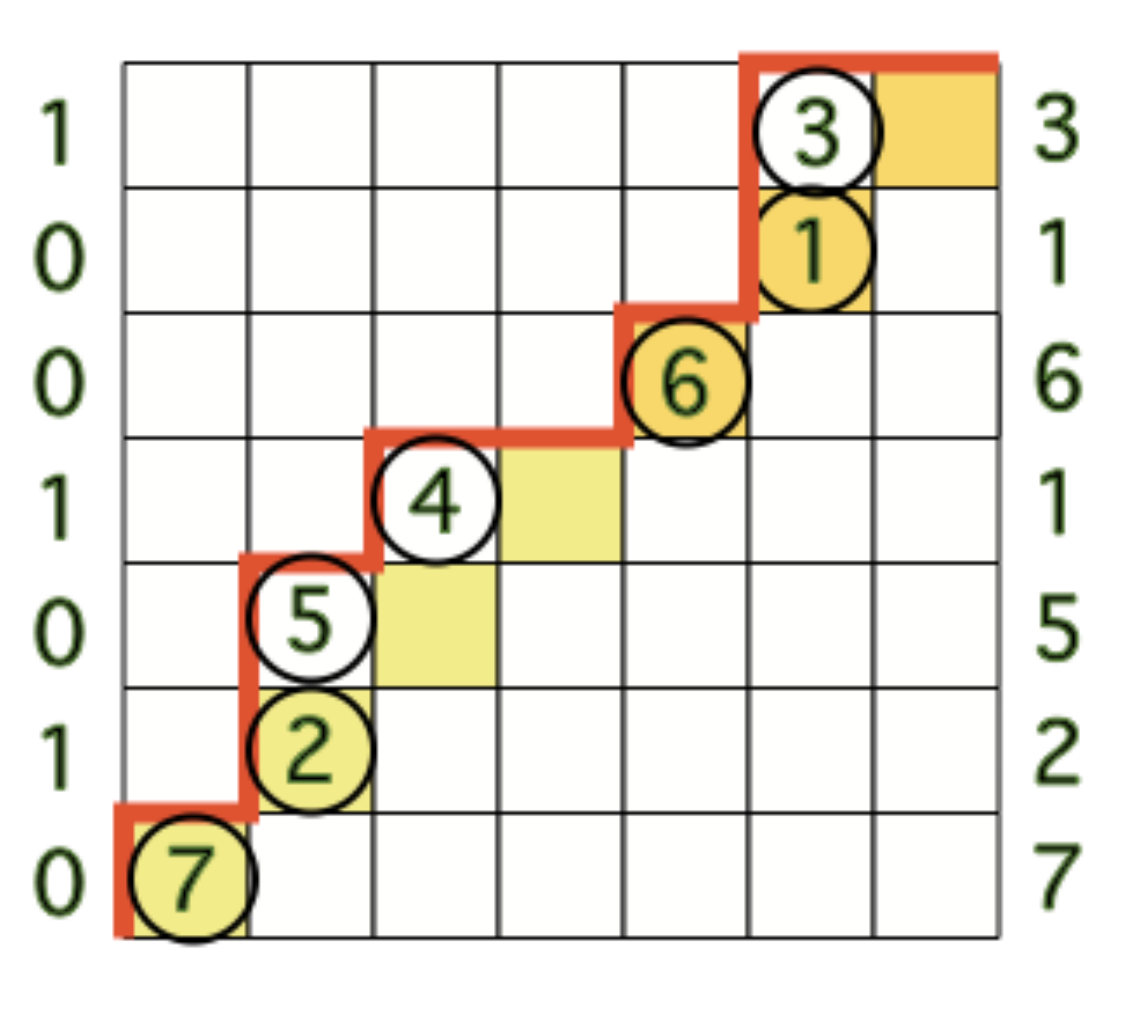}}}  
\label{eq3p7}
\end{equation}
Now in the first $PF$, the pairs $(3,7)$,  $(1,5)$ and $(6,2)$,
are the only ones contributing to the dinv and the sum of the area numbers is $5$,
so its classical weight is $t^5q^3$. Similarly,
the pairs contributing to the dinv on the $PF$ on the right are 
$(2,6)$,  $(5,1)$ and $(4,1)$ and the area numbers add to $3$, so its 
classical weight is $t^3q^3$. The latter is not the same as what comes out of \eqref{eq3p4}.
The area is OK but the dinv in not. The calculation in \eqref{eq3p5} 
thus asserts that the ``{\it new dinv}'' should be $4$. 
Similarly, as we will show in a moment, the calculation in \eqref{eq3p6} yields that
the new dinv of the $PF$ on the left of \eqref{eq3p7} should be $4$ again. In fact,
it turns out that none of the $8$ parking functions we obtain by inserting an extra $1$ 
in the reduced tableau on the right of \eqref{eq3p4} have area $5$ thus the last term in \eqref{eq3p6} can only be produced by the $PF$ on the left of \eqref{eq3p7}.  We give below the $8$ above mentioned reduced tableaux with the extra $1$ shaded
\begin{equation}
\vcenter{\hbox{ \includegraphics[width=5in]{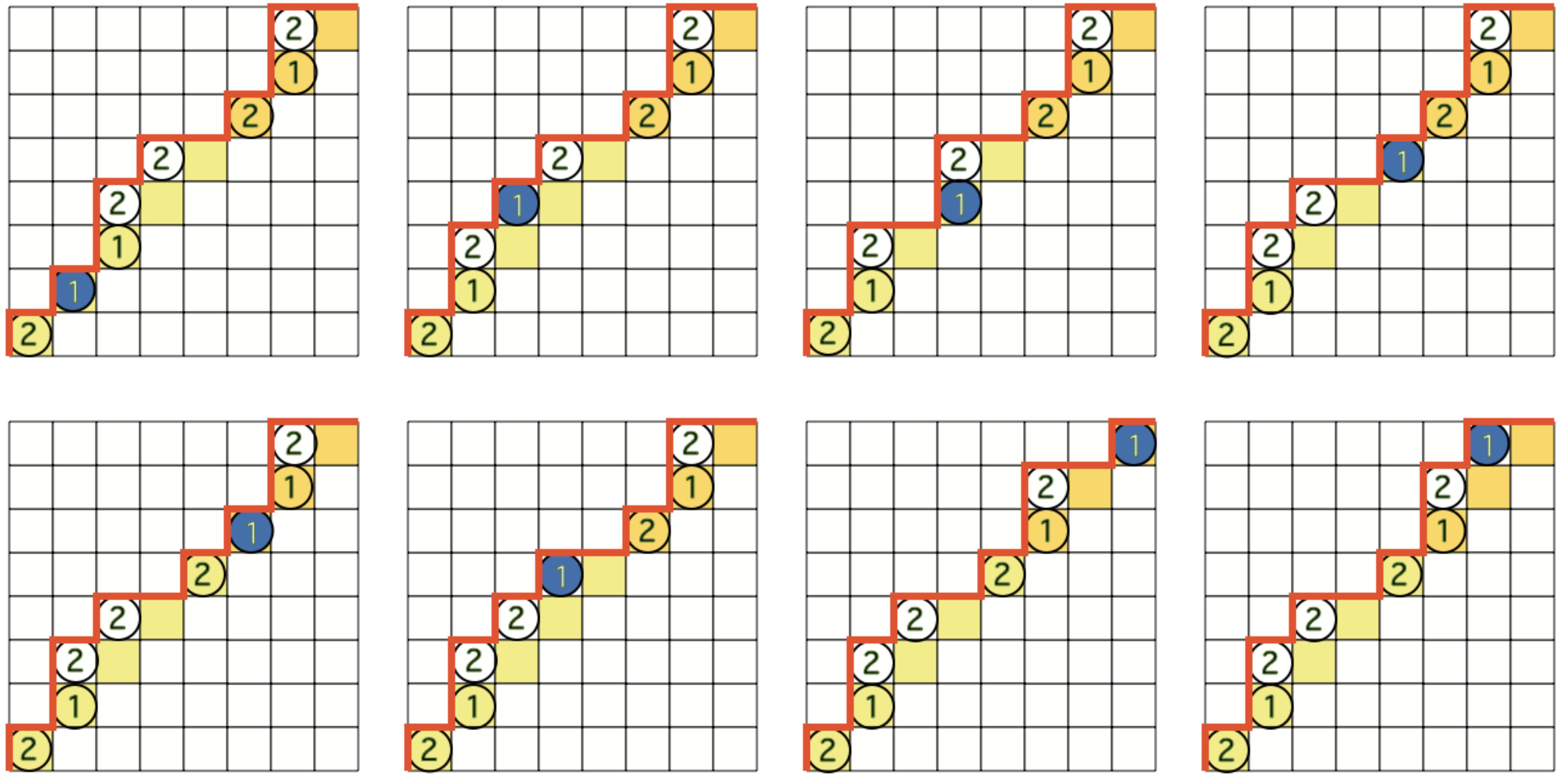}}}  
\label{eq3p8}
\end{equation}
Therefore the reduced tableaux of the family $\CAP_{3} ([3,2])$ are $9$ altogether,
as predicted by \eqref{eq3p6}, namely  the $8$ above together with 
the tableau on the left of \eqref{eq3p4}. Computing their classical weight and summing
gives
$$
\sum_{PF\in \CAP_{3} ([3,2])} t^{area(PF)}q^{dinv(PF)}
\ses t^3(q^4+2q^5+2q^6)+t^4(q^3+q^4+q^5)+t^5 q^3
$$
the first $8$ terms from \eqref{eq3p8} and the last from the left tableau in \eqref{eq3p4}.
As we see this is not quite the same polynomial as in \eqref{eq3p6}. Note that the area again  works but the  classical dinv does not!.
\sas

For a while in our investigation this appeared to be a challenging puzzle.
The  discovery of the recursion of Theorem \ref{th:IIp1} completely solved this puzzle
but, as we shall see, it created  another puzzle. 
\sas

Let us have a  closer look at \eqref{eqIIp16}, namely
the identity
\begin{align}
\LL \Delta_{h_J}\BC_{p_1}\BC_{p_2}\cdots \BC_{p_k} 1\scs  e_n \RR 
&\ses
t^{p_1-1}q^{k-1}
\LL \Delta_{h_{J-1}} \BC_{p_2}\cdots \BC_{p_k}\BB_{p_1} 1\scs  
 e_{n} \RR \nonumber
 \\
 &\bigsp\bigsp
 \sps \chi(p_1=1)\LL \Delta_{h_{j}}\BC_{p_2}\cdots \BC_{p_k} 1\scs   e_{n-1} \RR 
\label{eq3p9}
\end{align}
Setting for a composition $p=(p_1,p_2,\ldots p_k)\models n$
$$
\PI_J(p)\ses \sum_{PF\in \CAP_J(p)}t^{area(PF)}q^{ndinv(PF)}
$$
our conjecture, together with the identity
$$
\BB_{p_1} 1\ses e_{p_1}= \sum_{(q_1,q_2,\ldots ,q_\ell)\models p_1}
\BC_{q_1}\BC_{q_2}\cdots \BC_{q_\ell} 1
$$
proved in \cite{HHLRU},  translates \eqref{eq3p9} into the recursion
\begin{equation}
\PI_J([p_1,p_2,\ldots p_k])\ses   t^{p_1-1}q^{k-1}
\sum_{r\models p_1}\PI_{J-1}([p_2,\ldots p_k,r])\sps
 \chi(p_1=1)\PI_J([p_2,\ldots p_k])
\label{eq3p10}
\end{equation}
where the symbol $[p_2,\ldots p_k,r]$ represents the concatenation of the 
compositions  $(p_2,\ldots p_k)$ and $r$. This strongly suggests
what should recursively happen  to the new weight of our parking functions
by the removal of a single (appropriate) car. That is, if the chosen car is $small$
there should be a loss of area of $p_1-1$ and a loss of ndinv of $k-1$ and if
the chosen car is $big$ no loss of any kind. 

Starting from this observation and further closer analysis of \eqref{eq3p9} led us to the following
recursive algorithm for constructing ``$ndinv$''.

This is best described by working with the corresponding reduced tableaux.
To begin it will be convenient to start by decomposing each $red(PF)$ into sections
corresponding to the parts of the given composition. To be more precise it is best
viewing our two line arrays as unions of {\it vertical dominos}. For instance 
the $red(PF)$ below, which is none other than the {\it minimal ones} obtained from
 the Dyck path on the right
 $$
\vcenter{\hbox{ \includegraphics[width=2in]{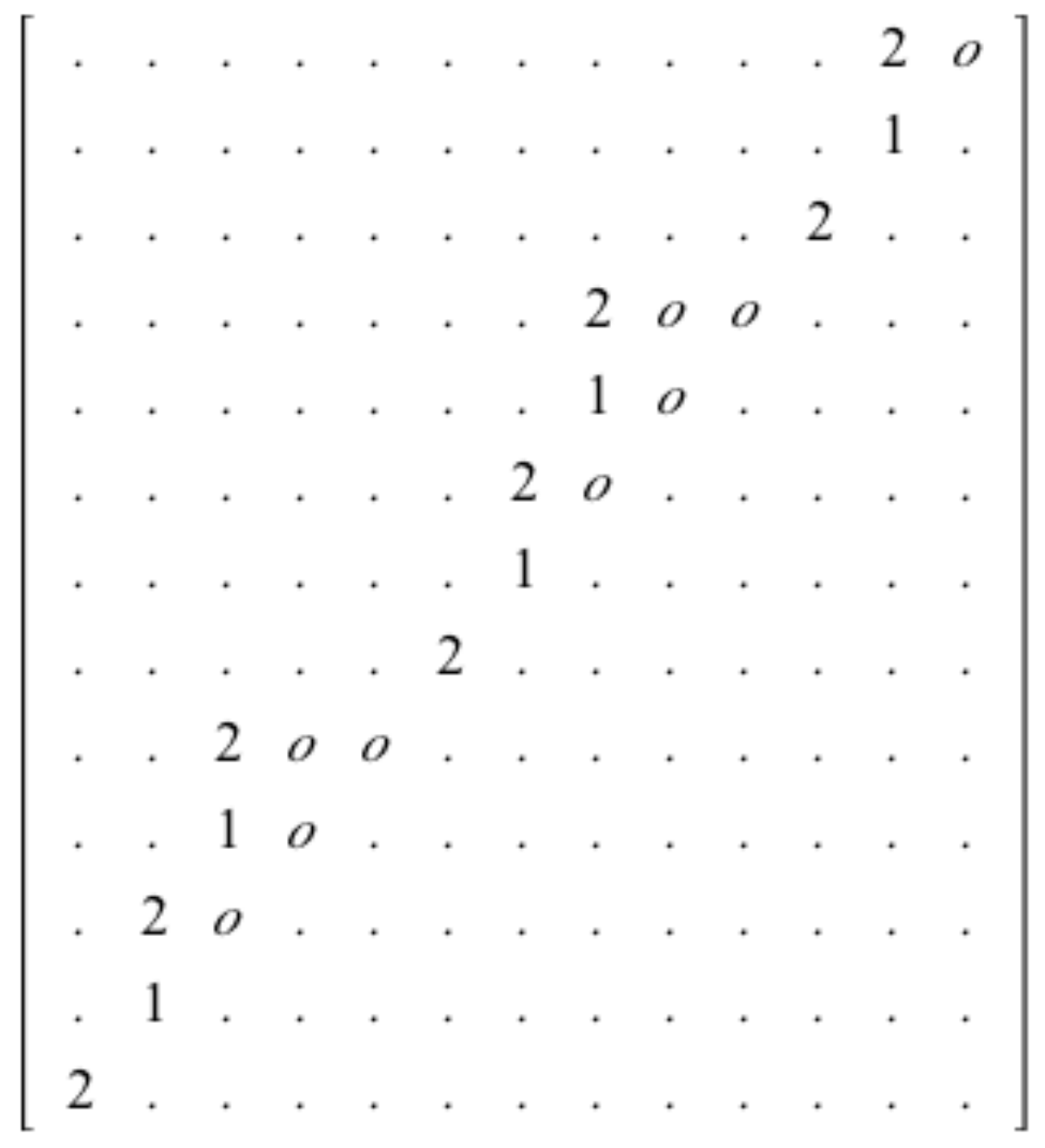}}}
\bigsp
\vcenter{\hbox{ \includegraphics[width=1.2in]{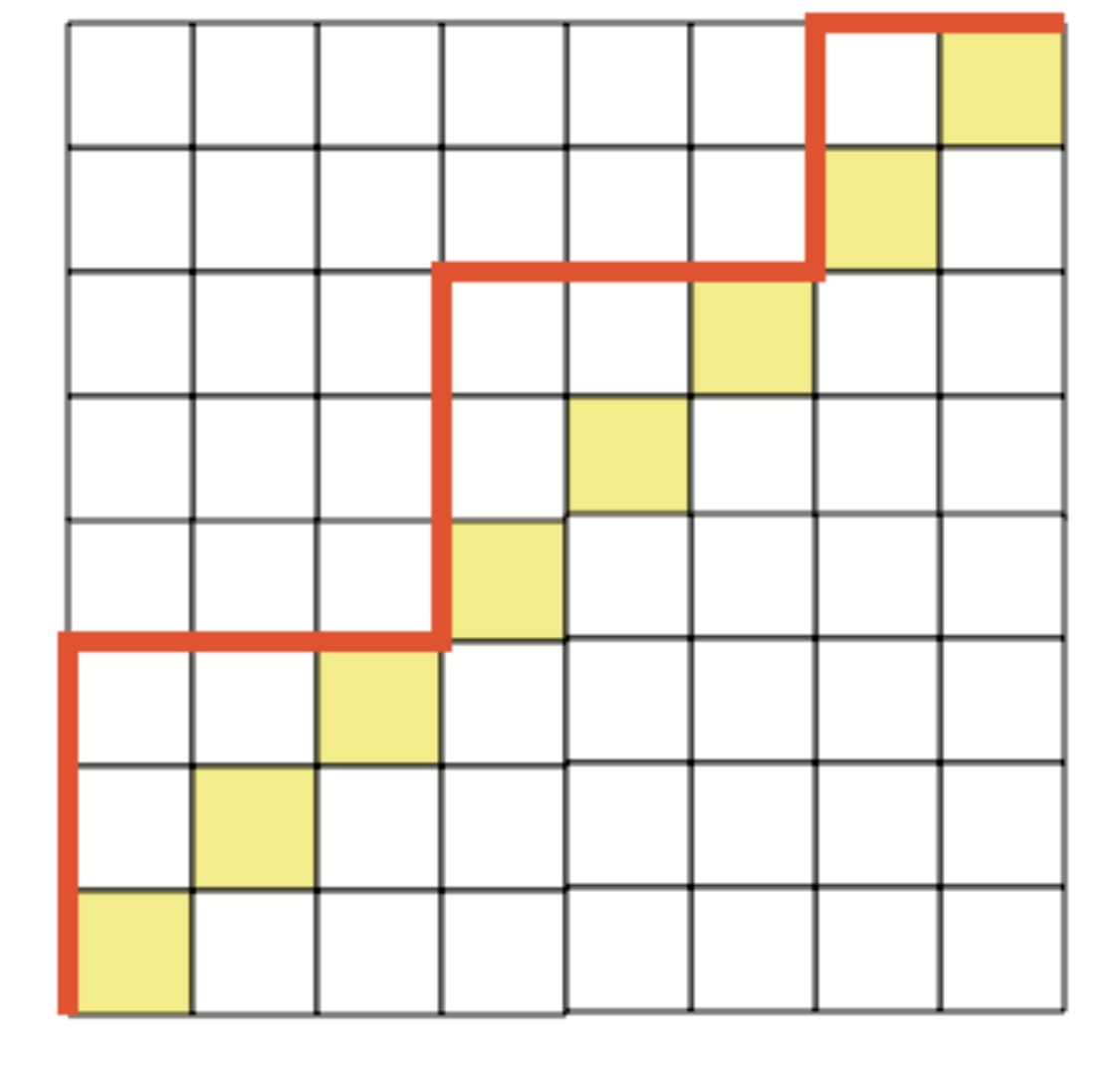}}}  
$$
will be viewed as the sequence of dominos 
\begin{equation}
\vcenter{\hbox{ \includegraphics[width=4in]{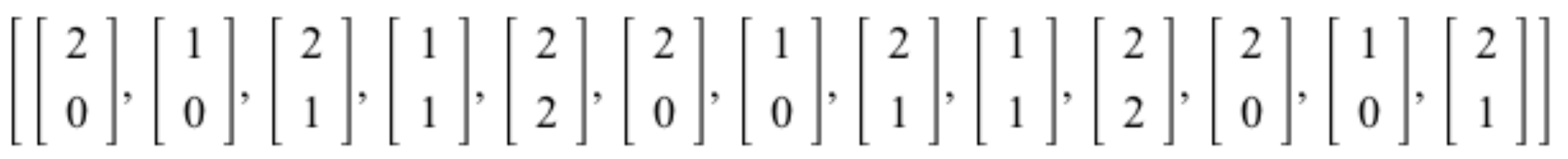}}}  
\label{eq3p11}
\end{equation}
Thus the corresponding $PF$ belongs to  the family $\CAP_5([3,3,2])$
and as such will be divided into $3$ sections, one for each part of $[3,3,2]$.
To do this we simply cut the sequence in \eqref{eq3p11} before each domino $[{2\atop 0}]$
obtaining the three sections 
\begin{equation}
\vcenter{\hbox{ \includegraphics[width=4.5in]{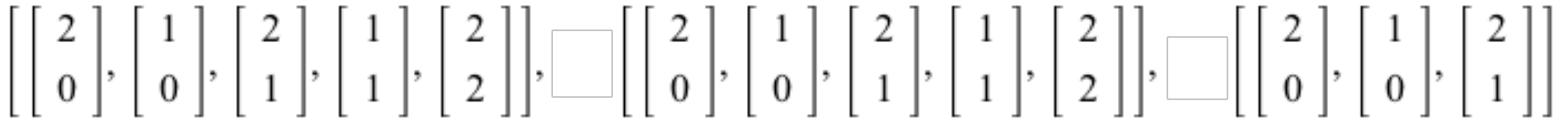}}}   
\label{eq3p12}
\end{equation}
Since  here  $p_1=3>1$, \eqref{eq3p9} suggests that we should remove a $1$ from 
the first section, then process it somewhat to cause a loss of dinv of $2$ ( $=k-1$),
and loss of area $2$ ($=p_1-1$). Taking a clue from the classical dinv, we can see that
the first small car in the corresponding $PF$ would contribute a unit to the classical  
dinv with the big cars to its right in the main diagonal. The latter  of course 
correspond to the dominoes  $[{2\atop 0}]$ that begin each of the following sections.
Thus the desired loss of dinv can be simply obtained by bodily moving the first section
to the end, and removing the  $[{1\atop 0}]$ obtaining  
\begin{equation}
\vcenter{\hbox{ \includegraphics[width=4.5in]{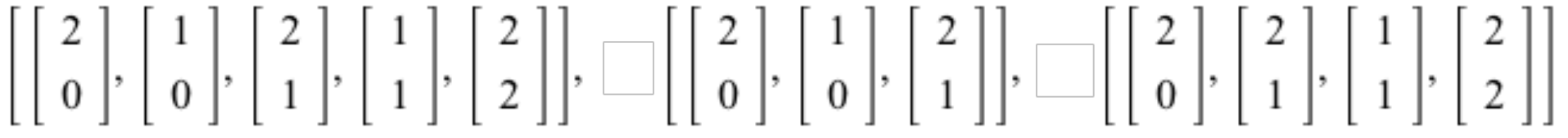}}} 
\label{eq3p13}
\end{equation}
 We may thus view that
the removed domino contributed a unit to dinv for each domino 
 $[{2\atop 0}]$ to its right.  But we still have not accounted for the loss of area
 and worse yet we will now have a big car on top of a big car.
 Since \eqref{eq3p9} tells that the loss of area   should be
$p_1-1$ then it must be  equal to the number of big cars in the moved section,
minus one. This means that we can fix both problems  by making the domino replacements 
$[{2\atop 1}]\ess \RA\ess  [{2\atop 0}]$  and $[{2\atop 2}]\ess \RA\ess  [{2\atop 1}]$,
obtaining 
\begin{equation}
\vcenter{\hbox{ \includegraphics[width=4.5in]{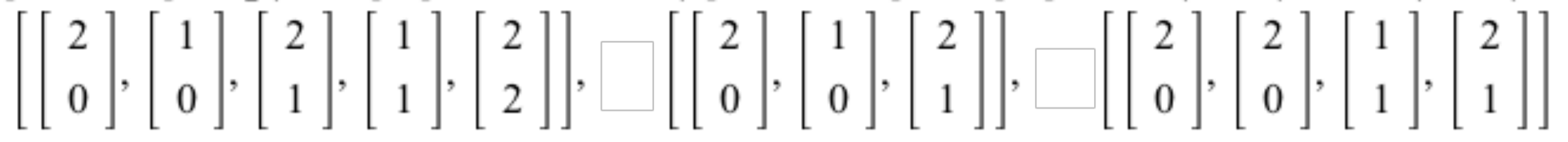}}}  
\ess\ess 
\label{eq3p14}
\end{equation}
but that creates a new problem, since the succession $[{2\atop 0}],[{1\atop 1}]$
would put a small car on top of a big car. We will fix this final problem 
by simply switching the $1$ with the $2$ obtaining   
\begin{equation}
\vcenter{\hbox{ \includegraphics[width=4.5in]{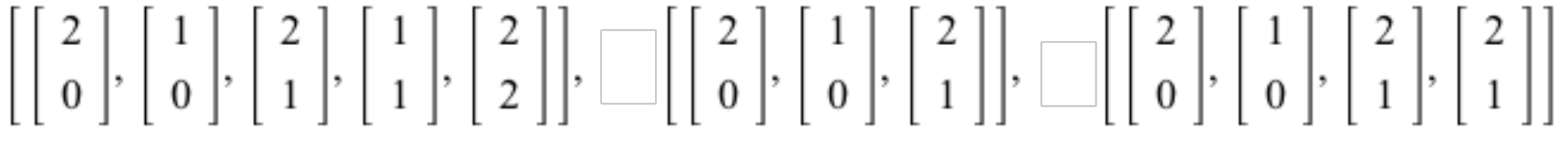}}}  
\ess\ess 
\label{eq3p15}
\end{equation}
which  gives the domino sequence of the $red(PF)$ below
$$
\vcenter{\hbox{ \includegraphics[width=1.8in]{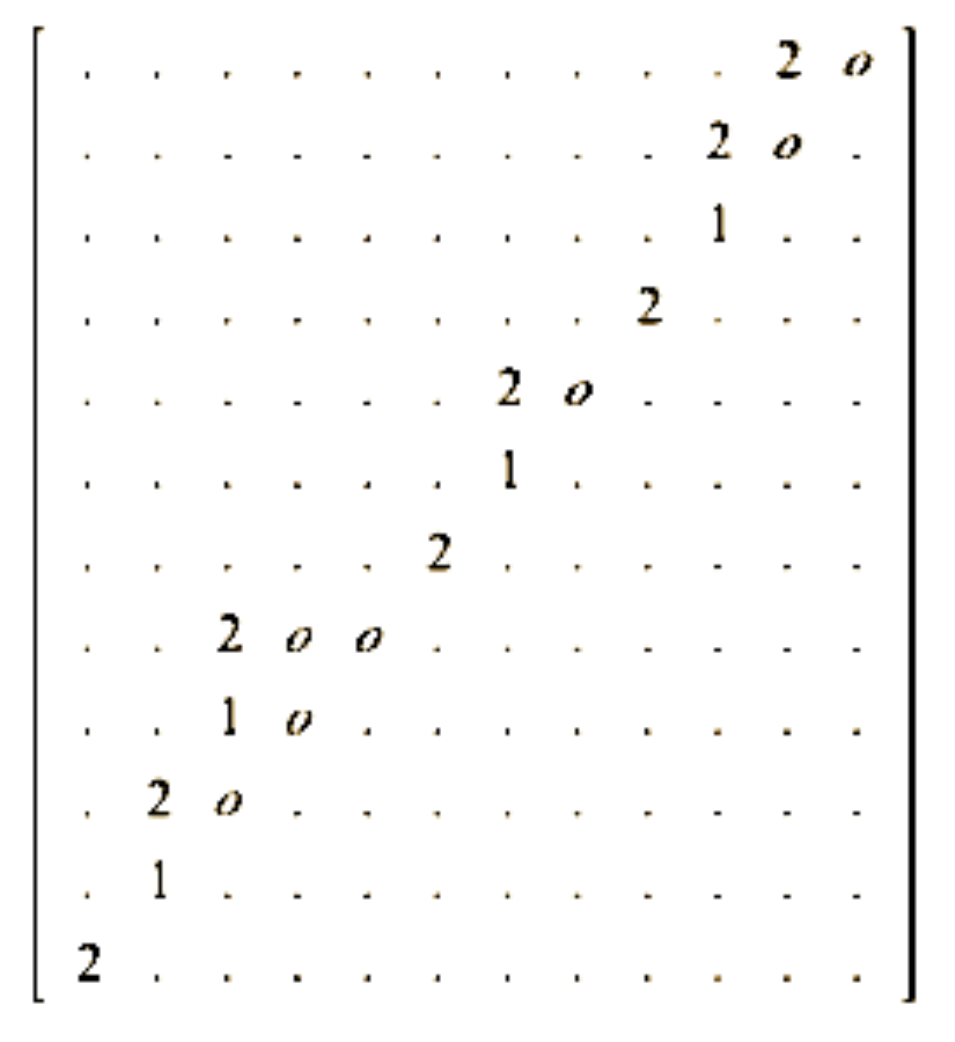}}}
\bigsp
\vcenter{\hbox{ \includegraphics[width=1.2in]{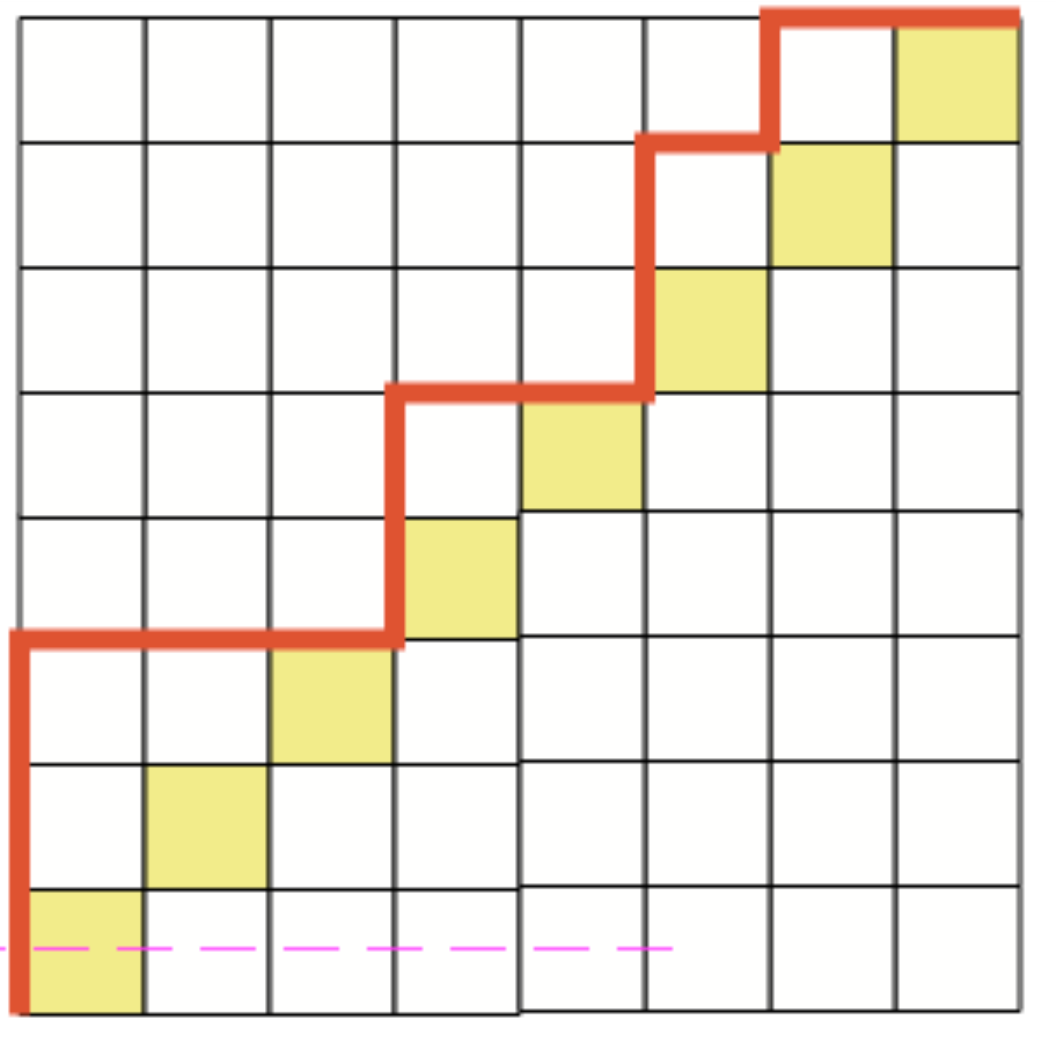}}}
$$
on the right of which we have depicted the Dyck path supporting the big cars.
\vfill

In the case that $p_1=1$ there will be only one  big car in the first section and
if there are small cars they all must be on the main diagonal. In this
case we can process the first section  as we did for $p_1>1$. If
there are no small cars then the first section consists of the single domino
$[{2\atop 0}]$ and \eqref{eq3p9} suggests that we should simply remove it 
with no further ado.
\sas

To carry out our definition of ``ndinv'' rigorously and in full generality,
we will break our argument into three separate steps. In the firs step
we use the ideas stemming from the above example to construct
a bijection 
\begin{equation}
\Phi \ssp :\ssp \CAP_J([p_1,p_2,\ldots p_k])
\ess\Longleftrightarrow\ess
\begin{cases}
\displaystyle\bigcup_{q\models p_1}\CAP_{J-1}([p_2,\ldots p_k,q]) & \hbox{ if }p_1>1\\
\CAP_{J-1}([p_2,\ldots p_k,1])\ssp \oplus \ssp 
\CAP_{J}([p_2,\ldots p_k])&\hbox{ if }p_1=1 
\end{cases}
\label{eq3p16}
\end{equation}
In the second step we define ``ndinv`` by setting for each $PF\in \CAP_J([p_1,p_2,\ldots p_k])$
\begin{equation}
ndinv(PF)\ses \begin{cases}
k-1+ndinv(\Phi(PF)) &\hbox{ if }J>0\\
0 &\hbox{ if }J=0 
\end{cases}
\label{eq3p17}
\end{equation}
From step 1 and step 2 it will follow that  the polynomials $\Pi_J([p_1,p_2,\ldots ,p_k])$
satisfy the same recursion as the polynomials 
$\LL\DD_{h_J}\BC_{p_1}\BC_{p_2}\cdots \BC_{p_k}1\scs e_n\RR$.

In the third step we establish the equality in \eqref{eq3p2}  by verifying the equality in the base
cases.
\sas

\noindent
In our first step, starting with a $PF\in \CAP_J([p_1,p_2,\ldots p_k])$
we construct $\Phi(PF)$ by the following procedure.

\begin{itemize}
\item {\it Cut the domino sequence of $red(PF)$ into sections starting at the dominos
$[{2\atop 0}]$}
\end{itemize}

\begin{enumerate}

\item If the first  section does not contain a domino  $[{1\atop 0}]$

\begin{itemize}
\item  {\it remove its only domino $[{2\atop 0}]$
from the sequence of dominos}
\end{itemize}

\item If the first  section  contains a domino   
$[{1\atop 0}]$, work on the first section as follows

\begin{itemize}
\item {\it Remove its first domino $[{1\atop 0}]$ }

\item {\it For each   (but the first)  domino $[{2\atop a}]$
make the replacement $[{2\atop a}]\ssp \RA \ssp [{2\atop a-1}]$}

\item {\it if  adjacent  pairs $ [{2\atop a-1}] [{1\atop a}]$ are  created 
make the replacements  
$ [{2\atop a-1}] [{1\atop a}]\ssp \RA \ssp [{1\atop a-1}] [{2\atop a}]$}

\item {\it Cycle the modified first section  to the end of the sequence
of dominos }
\end{itemize}
\end{enumerate}
\noindent  In any case we let   $PF'$ be the parking function corresponding to the
resulting domino sequence and set  
$$
\Phi(PF)=PF'.
$$   

It is clear that $\Phi$ maps the left hand side of \eqref{eq3p16} into the right hand side.
To show that $\Phi$ is a bijection we need only show that the procedure above 
can be reversed to reconstruct  $PF$ from  $PF'$  for any $PF'$ in the
right hand side of \eqref{eq3p16}. We will outline the salient steps of the reversed procedure.
\sas

Note first that since our target $PF=\Phi^{-1}(PF')$ is to be in $\CAP_J([p_1,p_2,\ldots p_k])$ we already know the diagonal composition of the Dyck path of the big
cars of $PF$. Thus we can proceed as follows
\sas

\begin{enumerate}
\item Say $PF'\in \CAP_{J}([p_2,\ldots p_k])$ (that will only occur when $p_1=1$)

\begin{itemize}
\item {\it Then $PF$ is the parking function obtained by prepending 
$[{2\atop 0}]$ to the domino sequence of $PF'$.}
\end{itemize}
 
\item Say $PF'\in \CAP_{J-1}([p_2,\ldots p_k,1])$ (that will only occur when $p_1=1$)

\begin{itemize}
\item  {\it  Then $PF$ is the parking function obtained by inserting 
$[{1\atop 0}]$ immediately after the first  $[{2\atop 0}]$ in the last section of  $PF'$,
then cycle back the last section to be the first in the domino sequence.}
\end{itemize}

\item Say $PF'\in \CAP_{J-1}([p_2,\ldots p_k,q])$ for a $q\models p_1-1>0$

\begin{itemize}
\item {\it Let $last(PF')$ be the domino sequence obtained  
by removing the first $k-1$ sections from the  domino sequence of $PF'$.}
\item {\it Modify  $last(PF')$ by
inserting  a  $[{1\atop 0}] $ immediately after its first
$[{2\atop 0}]$.}
\item {\it For $a\ge 1$ replace, in  $last(PF')$,  each pair  $  [{1\atop a-1}] [{2\atop a}]$
by the pair $[{2\atop a}] [{1\atop a}]$.}
 \sas

\item[] (note that for this to put a big car on top of a big car we must have   a  $ [{2\atop a-1}]$ preceding the $[{1\atop a-1}]$, but that $ [{2\atop a-1}]$
 will also be replaced either by this step or by the next steps)
  \sas
  
\item {\it
For $a\ge 1$ replace each $[{2\atop a}]$ 
preceded by a $[{1\atop a}]$ in $last(PF')$ by $[{2\atop a+1}]$}

\item {\it  Replace each  $ [{2\atop 0}]$, except the first by a $ [{2\atop1}]$}
 
\item[]
(note if a replaced $ [{2\atop 0}]$ is preceded by a $ [{2\atop 0}]$ then that
 $\ssp $ $ [{2\atop 0}]$ itself will also be replaced by $ [{2\atop1}]$) 

\item {\it
The modified $last(PF')$ followed by the the first $k-1$ sections of $PF'$
gives then the domino sequence of our target $PF$.}
\end{itemize}
\end{enumerate}

\noindent
This completes our proof that $\Phi$ is bijective. 
\sas

Since $\Phi$ moves   EAST, by one cell, $p_1-1$  big cars it causes 
  a loss of area equal to $p_1-1$. Thus the definition
in \eqref{eq3p17} combined by the bijectivity of $\Phi$ proves  the recursion in \eqref{eq3p10}.
\sas

It remains to show equality in the base cases which, in view of the definition in \eqref{eq3p17}
should be  characterized by the absence of small cars.

Now it easily seen, combinatorially,  that $\CAP_0([p])$
is an empty family except when all the components of $p$ are equal to
$1$. To see this note that it is only the presence of small cars that allows
the supporting Dyck path of  one of our $PF's$  to have columns 
of lengths $2$. But if all the columns are of length $1$, the ``area'' 
statistic  is $0$ and the Dyck path
supporting the big cars  can only have area sequence a string of $0's$.
But in this case the family reduces to a single parking function 
which consists of cars $1,2, \cdots ,n$ placed on the main diagonal from top to bottom.
Thus it follows from our definition  
of $\Pi_J(p)$  and \eqref{eq3p17} that
$$
\Pi_0([p_1,p_2,\ldots ,p_k])\ses
\begin{cases}
0 &\hbox{ if some }p_i>1\\
1 &\hbox{ if all }p_i=1
\end{cases}
$$
Since by definition  $\DD_{h_0}$ reduces to the identity operator,
 the equality for the bases cases results from the following fact
\sas

\begin{theorem}
For $p=(p_1,p_2,\ldots ,p_k)\models n$ we have
\begin{equation}
\LL\BC_{p_1}\BC_{p_2}\cdots \BC_{p_k} 1\scs e_n\RR\ses 
\begin{cases}
0 &\hbox{ if some }p_i>1\\
1 &\hbox{ if all }p_i=1
\end{cases}
\label{eq3p18}
\end{equation}
\end{theorem}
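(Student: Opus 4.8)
The plan is to prove \eqref{eq3p18} by a short induction on $k$ (equivalently, on $n=p_1+\cdots +p_k$), using the recursion of Theorem \ref{th:IIp1} at the extreme value $j=0$. The key observation is that at $j=0$ both Macdonald operators appearing in that recursion degenerate: since $h_0=1$ the operator $\DD_{h_0}$ is the identity, while $h_{-1}=0$ as a symmetric function, so $\DD_{h_{-1}}\TH_\mu=h_{-1}[B_\mu]\TH_\mu=0$ and $\DD_{h_{-1}}$ is the zero operator. Hence, specializing \eqref{eqIIp15} to $j=0$, the prefactor $t^{p_1-1}q^{k-1}$ multiplies the zero operator and the whole first term on the right vanishes, leaving the simple reduction
$$
\LL \BC_{p_1}\BC_{p_2}\cdots \BC_{p_k} 1\scs e_n\RR\ses
\chi(p_1=1)\,\LL \BC_{p_2}\cdots \BC_{p_k} 1\scs e_{n-1}\RR .
$$

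Granting this reduction the argument is immediate. For the base case $k=0$ the empty product is $1$, $e_0=1$, and $\LL 1\scs 1\RR=1$, which is consistent with \eqref{eq3p18} since the clause ``all $p_i=1$'' then holds vacuously. For the inductive step take $k\ge 1$. If $p_1>1$ the reduction gives $\LL \BC_{p_1}\cdots \BC_{p_k} 1\scs e_n\RR=0$, which is the asserted value since some part exceeds $1$. If $p_1=1$ then $(p_2,\ldots ,p_k)\models n-1$ and the reduction gives $\LL \BC_{p_1}\cdots \BC_{p_k} 1\scs e_n\RR=\LL \BC_{p_2}\cdots \BC_{p_k} 1\scs e_{n-1}\RR$, which by the inductive hypothesis equals $0$ if some $p_i>1$ with $i\ge 2$ and $1$ if $p_2=\cdots =p_k=1$. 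As $p_1=1$, these two cases are precisely ``some $p_i>1$'' and ``all $p_i=1$'' for the full composition $p$, and the induction is complete.

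I expect no genuine obstacle here: essentially all of the work is already contained in the proof of Theorem \ref{th:IIp1}, and \eqref{eq3p18} is just its ``$j=0$, no small cars'' shadow. The one point deserving a line of justification is that Theorem \ref{th:IIp1} is legitimately valid at $j=0$; this is clear, since Theorem \ref{th:2p1} is stated for all $j\ge 0$ and, at $j=0$, the equivalent identity \eqref{eq2p2} collapses (using $\DD_{h_{-1}}h_n[X/M]=0$ and $\DD_{h_0}=\mathrm{id}$) to the harmless endpoint statement $\BC_a^*\,h_n[X/M]=\chi(a=1)\,h_{n-1}[X/M]$, which is exactly Proposition \ref{prop:2p2} with $j=0$, its right-hand sum over $0\le s\le j-1$ being empty. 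A reader preferring a fully self-contained proof could instead extract directly the coefficient of $s_{1^n}=e_n$ in $\BC_{p_1}\cdots \BC_{p_k} 1$ from the plethystic formula \eqref{eqIIp12} defining $\BC_a$, but the recursive route above is considerably shorter.
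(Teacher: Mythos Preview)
Your argument is correct. Specializing Theorem \ref{th:IIp1} to $j=0$ is legitimate: Theorem \ref{th:2p1} is stated for $j\ge 0$, and at $j=0$ both sides of \eqref{eq2p7} collapse (the sum over $0\le s\le j-1$ is empty, and $\DD_{h_{-1}}=0$), yielding exactly the reduction
\[
\LL \BC_{p_1}\cdots \BC_{p_k} 1\scs e_n\RR\ses \chi(p_1=1)\,\LL \BC_{p_2}\cdots \BC_{p_k} 1\scs e_{n-1}\RR,
\]
after which the induction is trivial.

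The paper takes a genuinely different route: instead of invoking the Macdonald-polynomial machinery behind Theorem \ref{th:IIp1}, it works directly from the plethystic definition \eqref{eqIIp12} of $\BC_a$. Expanding $\BC_a s_\la$ and pairing with $e_n$, the paper observes that $\LL s_{\la/\mu}h_{a+|\mu|}\scs e_n\RR=0$ unless $a+|\mu|\le 1$, which forces $a=1$ and $\mu=\varnothing$; this gives $\LL \BC_a s_\la\scs e_n\RR=\chi(a=1)\chi(\la=1^{n-a})$ and hence the \emph{same} one-step reduction you derived. So both proofs funnel through the identical recursion; only the justification differs. Your route is shorter because it recycles the heavy lifting already done in Section~3, while the paper's proof is fully self-contained and uses nothing beyond elementary Schur function identities---an advantage if one wants \eqref{eq3p18} independently of the $\DD_{h_j}$ apparatus. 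You in fact anticipated this alternative in your final sentence.
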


\begin{proof}
Recall from \eqref{eqIIp2} that for any symmetric function $F[X]$ we have
\begin{equation}
\BC_a F[X]\ses (-{\TS{ 1\over q }})^{a-1} F\big[X-\TS{1-1/q \over z}
\big]\displaystyle
\sum_{m\ge 0}z^m h_{m}[X]\, \Big|_{z^a},
\label{eq3p19}
\end{equation}
In particular it follows that for any Schur function $s_\la$ we have
$$  
\BC_a s_\la[X] \ses (-{\TS{ 1\over q }})^{a-1}
\sum_{\mu\con \la}s_{\la/\mu}[X]s_\mu[1/q-1] 
\displaystyle
h_{a+|\mu|}[X] 
$$
This gives for $a+|\la |=n$
\begin{equation}
\LL
\BC_a s_\la[X] \scs e_n
\RR\ses 
(-{\TS{ 1\over q }})^{a-1}
\sum_{\mu\con \la}s_\mu[1/q-1] \LL s_{\la/\mu}  
h_{a+|\mu|} \scs e_n\RR
\label{eq3p20}
\end{equation}
and the Littlewood-Richardson rule gives
$$
\LL s_{\la/\mu}  
h_{a+|\mu|} \scs e_n\RR\ses \LL s_{\la/\mu}  
\scs h_{a+|\mu|}^\perp e_n\RR\ses 0
$$
unless $a+|\mu|=1$, Thus for $a\ge 1$ \eqref{eq3p20} reduces to
\begin{equation}
\LL
\BC_a s_\la[X] \scs e_n
\RR\ses 
(-{\TS{ 1\over q }})^{a-1}
 \LL s_{\la}  
h_{a} \scs e_n\RR
\ses
\begin{cases}
1 &\hbox{ if }a=1\hbox{ and } \la=1^{n-a}\\
0 &\hbox{ otherwise}
\end{cases}
\label{eq3p21}
\end{equation}
Since 
$$
\BC_a 1\ses 
(-{\TS{ 1\over q }})^{a-1}
h_a
$$
the first case of \eqref{eq3p18} follows immediately from \eqref{eq3p21}.  
On the other hand even when all the $p_i$ are equal to $1$ 
in successive applications of $\BC_1$ only the term 
corresponding to  $s_{1^m}$ in the Schur function expansion of
$\BC_1^m 1$ will  survive in the scalar product
$$
\LL\BC_1  \BC_1^m 1\scs e_{m+1}\RR
$$
since from \eqref{eq3p21} it follows that $\BC_1^m 1 \Big|_{s_{1^m}}=1$,
the second case of \eqref{eq3p18} is also another consequence of \eqref{eq3p21}. 

This completes of proof of \eqref{eq3p18}. This  was the last fact we need to 
establish the equality in \eqref{eqIIp17}.  
\end{proof}

\begin{remark}
As we already  mentioned, our definition of $ndinv$ creates another puzzle.
Indeed, the classical $dinv$ can be immediately computed from the geometry of the
parking function or directly from \eqref{eqIIp6} which  expresses it explicitly
in terms  of  the two line array representation.
For this reason we made a particular effort to obtain a non recursive
construction of $ndinv$ and in the best scenario derive form it an explicit formula
similar to \eqref{eqIIp6}.
However our efforts yielded only a partially non-recursive construction.
In our original  plan of writing we decided to include this further result 
even though in the end it yields a more complex algorithm for computing 
$ndinv$ than from the original recursion.  This was in the hope that
our final construction may be conducive to the discovery 
of an  explicit formula. It develops that during  the  preparation of
this manuscript a new and better reason emerged for the inclusion
of our final construction. It turns out that Angela Hicks and Yeonkyung Kim
have very recently succeeded  in discovering the desired explicit formula
by a careful analysis of the combinatorial  identities we are about to present.
The results  of Hicks-Kim will appear in a separate  publication  \cite{HicksKim}. 
\end{remark}

For our less recursive  construction of  $ndinv$ it will be convenient to
make a few changes in the domino sequences. To begin, we shall use the actual car numbers at the top of the dominos rather than $1$ or $2$. 
We do this, so  that we may 
refer to individual dominos by their car as the corresponding area number
on the bottom is being changed. But now, to distinguish  big cars from 
small cars we must in each case specify the number $J$ of small cars.
Secondly, we will have sections
end with a big car, rather than begin with a big car. This only requires,
moving  the initial big car to the end of the domino sequence. 
For example,  the parking function below whose domino sequence was 
given in \eqref{eq3p11} has $J=5$ thus cars $1,2,3,4,5$ are $\underline{ small}$  and   $5,6\ldots,13$ 
are $\underline{big}$.
\begin{equation}
\vcenter{\hbox{ \includegraphics[width=2in]{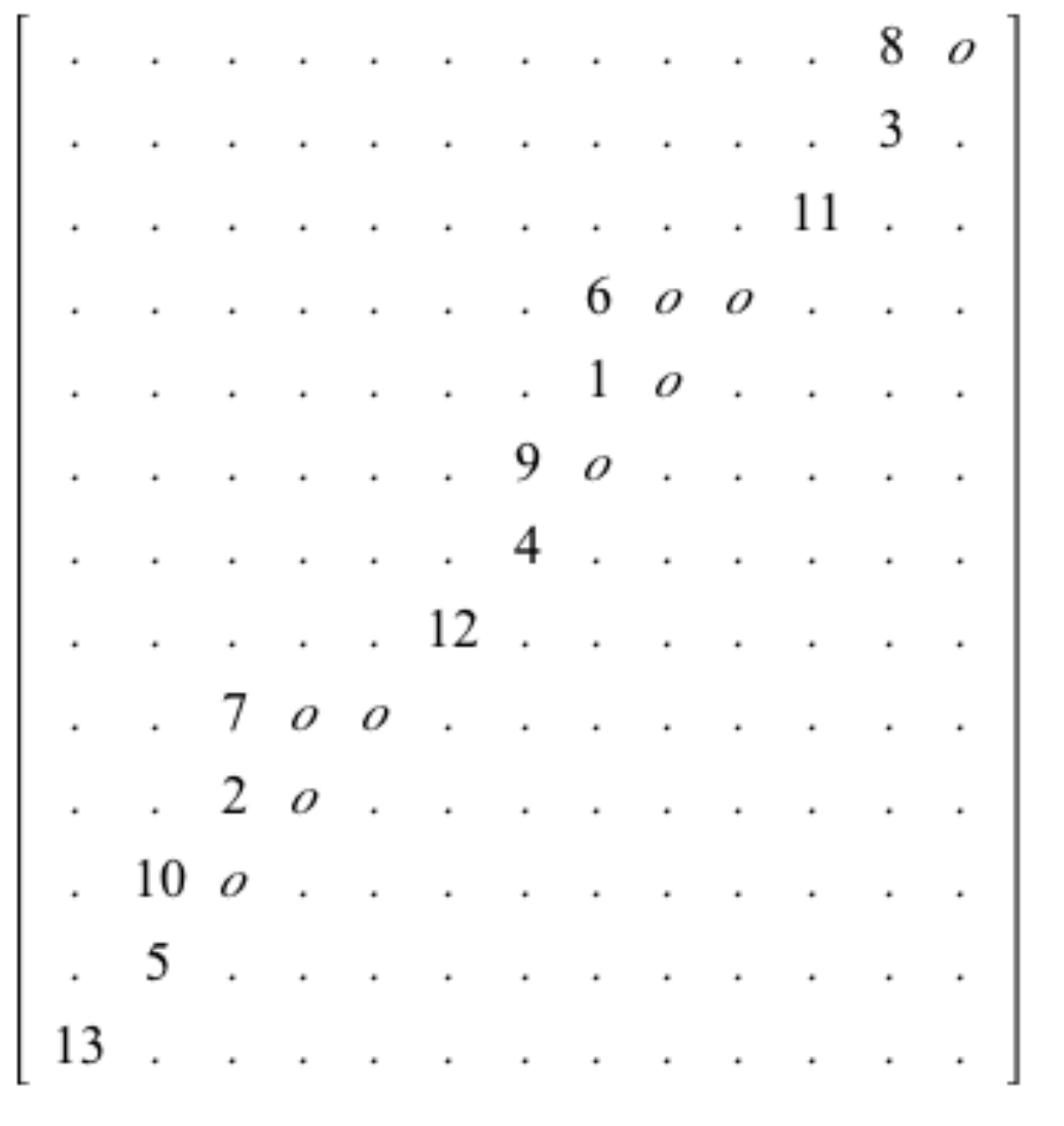}}}
\label{eq3p22}
\end{equation}
its  domino sequence is now
\begin{equation}
\vcenter{\hbox{ \includegraphics[width=4.5in]{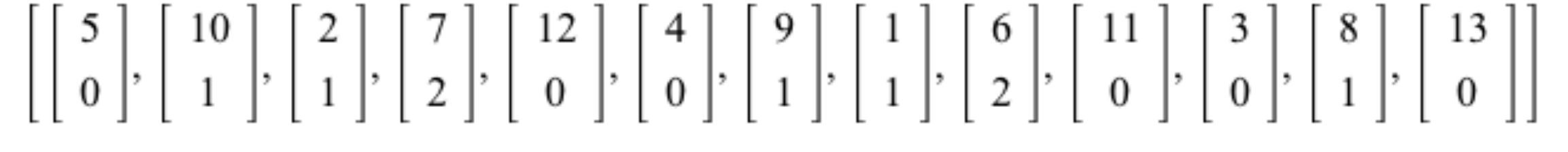}}}  
\label{eq3p23}
\end{equation}
and its decomposition into sections is  as shown below 
\begin{equation}
\vcenter{\hbox{ \includegraphics[width=5in]{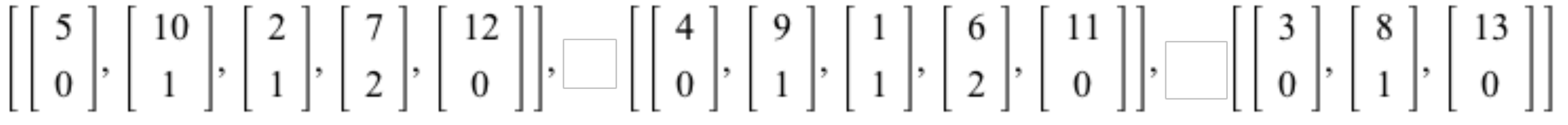}}}  
\label{eq3p24}
\end{equation}

For convenience we will now need  to use the symbols ``$[{b\atop a}]$'' 
and ``$[{s\atop a}]$'' to respectively represent ``big'' and ``small '' car
dominos.
To motivate our second construction of $ndinv$ we will begin by 
modifying  our first construction to adapt to these new domino sequences.

{ (1)}
{\it The recursive construction will now 
consist of as many steps as there are dominos in the domino sequence}

{ (2)}
{\it At each step the first  domino of the first section is removed}

{(a)}{\it  when we remove an $[{s\atop 0}]$,  the section is
cycled  to the end after it is processed  as before}

{(b)}{\it  when  we remove a $[{b\atop 0}]$, it is because the
section 
consisted of a single big car domino.
 }

{ (3)}{\it The  removal of  an $[{s\atop 0}]$ contributes to $ndinv$   the number of $[{b\atop 0}]$'s to its right minus one.}

There are a few observations to be made about the effect of the cycling process.  To begin note that when the domino sequence consists of
a single section, no visible cycling occurs.  However, even  in this case, for accounting purposes, it is convenient to consider all of its  dominos to have been cycled. With this provision, each domino in the original domino sequence will have an associated cycling number $c$ that counts the number of times it has been cycled  before it is removed.

Based on these observations, a step by step study of our recursive construction of  $ndinv$ led us to the following somewhat less recursive 
algorithm. It consists of two stages. In the first stage, the domino sequence
is doctored and wrapped around a circle to be used in the second stage.
The second stage uses circular motion to mimic  
the cycling of sections that takes place in the recursive procedure. 
To facilitate the understanding of  the resulting algorithm
we will illustrate each stage by applying it to the parking function in \eqref{eq3p22}. 
More precisely we work as follows
\sas
\noindent
 \hskip .2in{\bf Stage I }
 
\begin{itemize}
\item {\it Move  each  $[{s\atop a}]$ in the domino sequence
$a$ places to its left and increase the area number by $1$ of each domino
$[{b\atop a}]$ that is being by-passed}
\end{itemize}

\centerline{ (for instance the domino sections in \eqref{eq3p24} become)}

\begin{equation}
\vcenter{\hbox{ \includegraphics[width=5in]{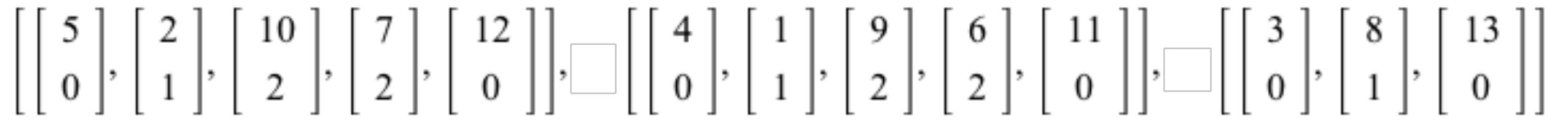}}}  
\label{eq3p25}
\end{equation}

\begin{itemize}
\item {\it Next wrap the resulting sequence clockwise 
around a circle with positions marked  by a ``$\circ$''}
\end{itemize}

\begin{equation}
\vcenter{\hbox{ \includegraphics[width=2.2in]{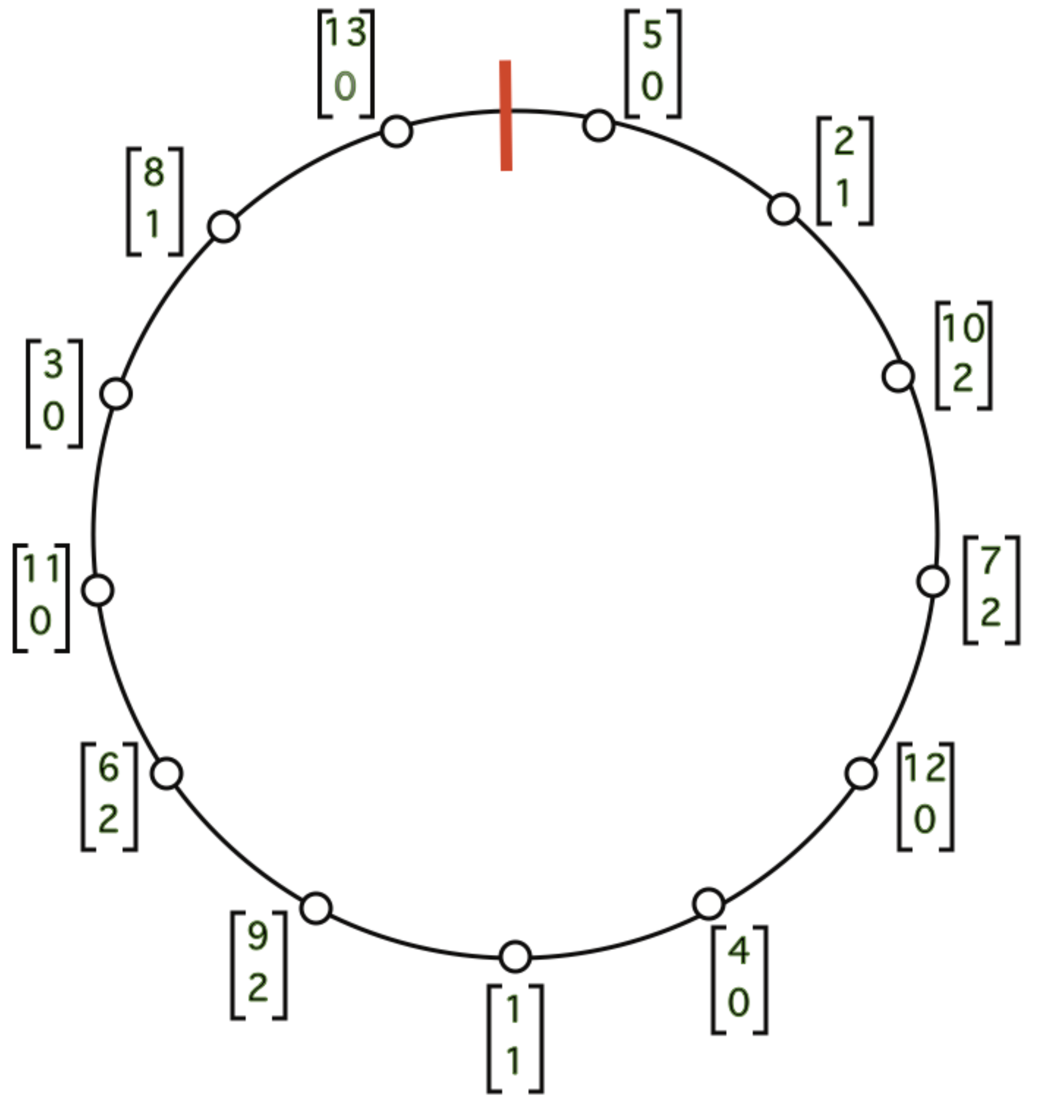}}} 
\ess\ess  \Longrightarrow  \ess\ess 
\vcenter{\hbox{ \includegraphics[width=2.2in]{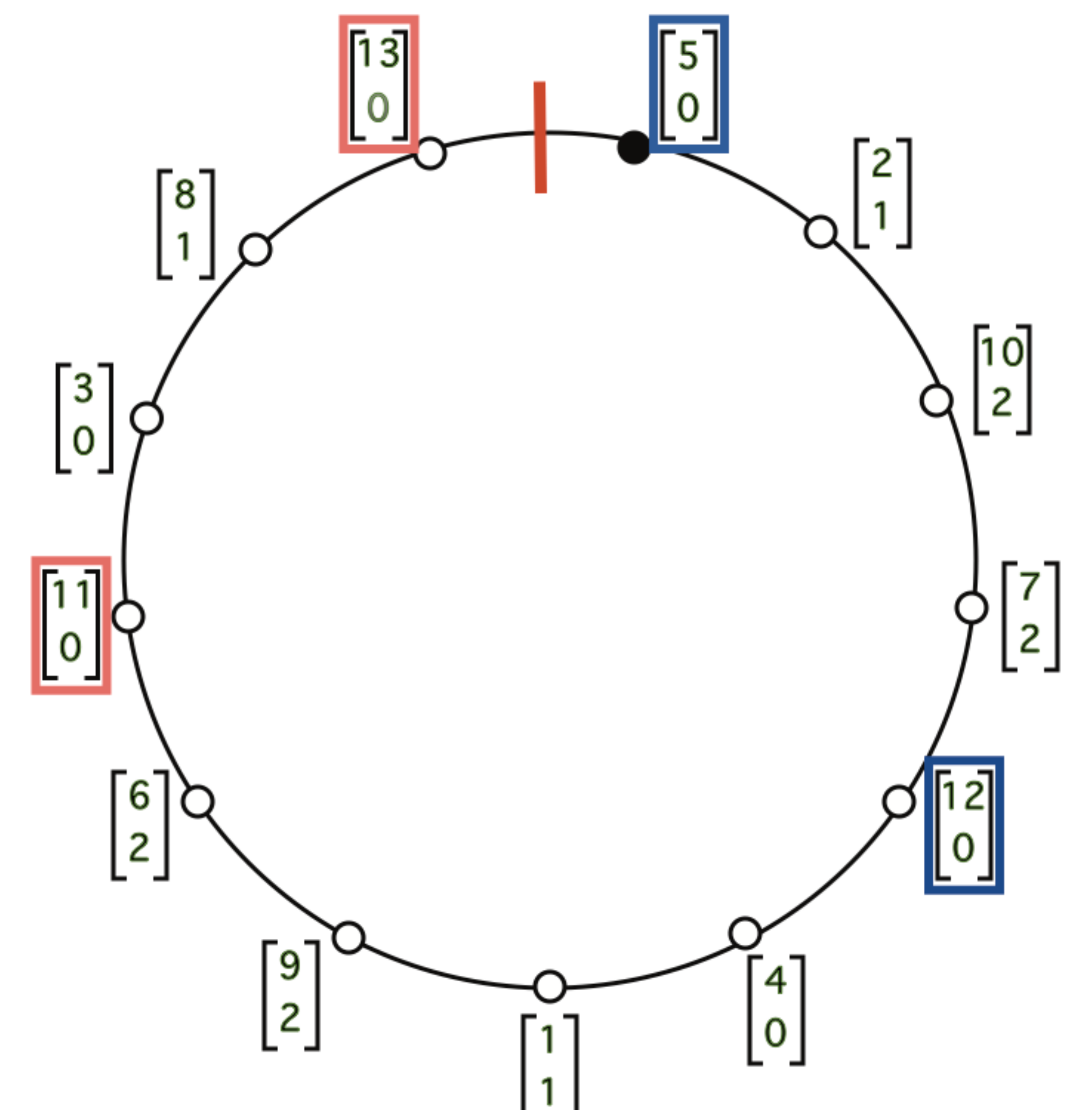}}} 
\label{eq3p26}
\end{equation}

\centerline{(we also place a  $bar $ ``$|$'' to separating  beginning and ending dominos )}
 
 \centerline{(the $\circ$'s will be successively changed to $\bullet$'s
 during the second stage) }
 \sas
 \noindent
 \hskip .2in {\bf Stage II }
 
\begin{itemize}
\item {\it set $ndinv=0$ and set the auxiliary parameter $c$ to $1$}
   
\item {\it mark the first domino by changing its ``$\circ$''
 to a ``$\bullet$''.  }

\item {\it cycling clockwise  from the first domino to the $bar $
  find the first  $[{b\atop 0}]$,  call it ``$endsec$''}
  
\item {\it cycling clockwise from $endsec$ to the $bar $
add $1$ to   $ndinv$ each time we meet a  $[{b\atop 0}]$.}
\end{itemize}

\centerline{ (on the right in \eqref{eq3p26} we have  darkly boxed the first domino and the $endsec$)} 
\centerline{ (and  lightly boxed the two $ndinv$ contributing  big car dominos)}

{\it  While there is a domino that has not been marked   repeat the following steps}
 
\begin{itemize}
\item {\it cycling clockwise from the last $endsec$
 mark the first unmarked domino}
  
\item {\it If in so doing  the bar is crossed add $1$ to $c$}

\end{itemize}
{\it If the  domino is a $[{s\atop a}]$
then clockwise from it find the first $[{b\atop a}]$
with $a<c$, call it ``$endsec$''
then cycle  clockwise from $endsec$ back to this $[{s\atop a}]$}
 
\begin{itemize}
\item {\it  for each encountered unmarked $[{b\atop a}]$
  add $1$ to $ndinv$ provided $a<c$ if the bar is not crossed  or $a<c+1$ 
  after the bar is crossed }
\end{itemize}
\sa

\centerline{ (the desired value of $ndinv$ is reached  after all the the small car dominos are marked)}

\vfill
 
 The successive configurations obtained after  the marking of
 small car dominos are displayed below with the same conventions 
 used on the right of \eqref{eq3p26}
\begin{equation}
  \Longrightarrow  \ess\ess
\vcenter{\hbox{ \includegraphics[width=2.2in]{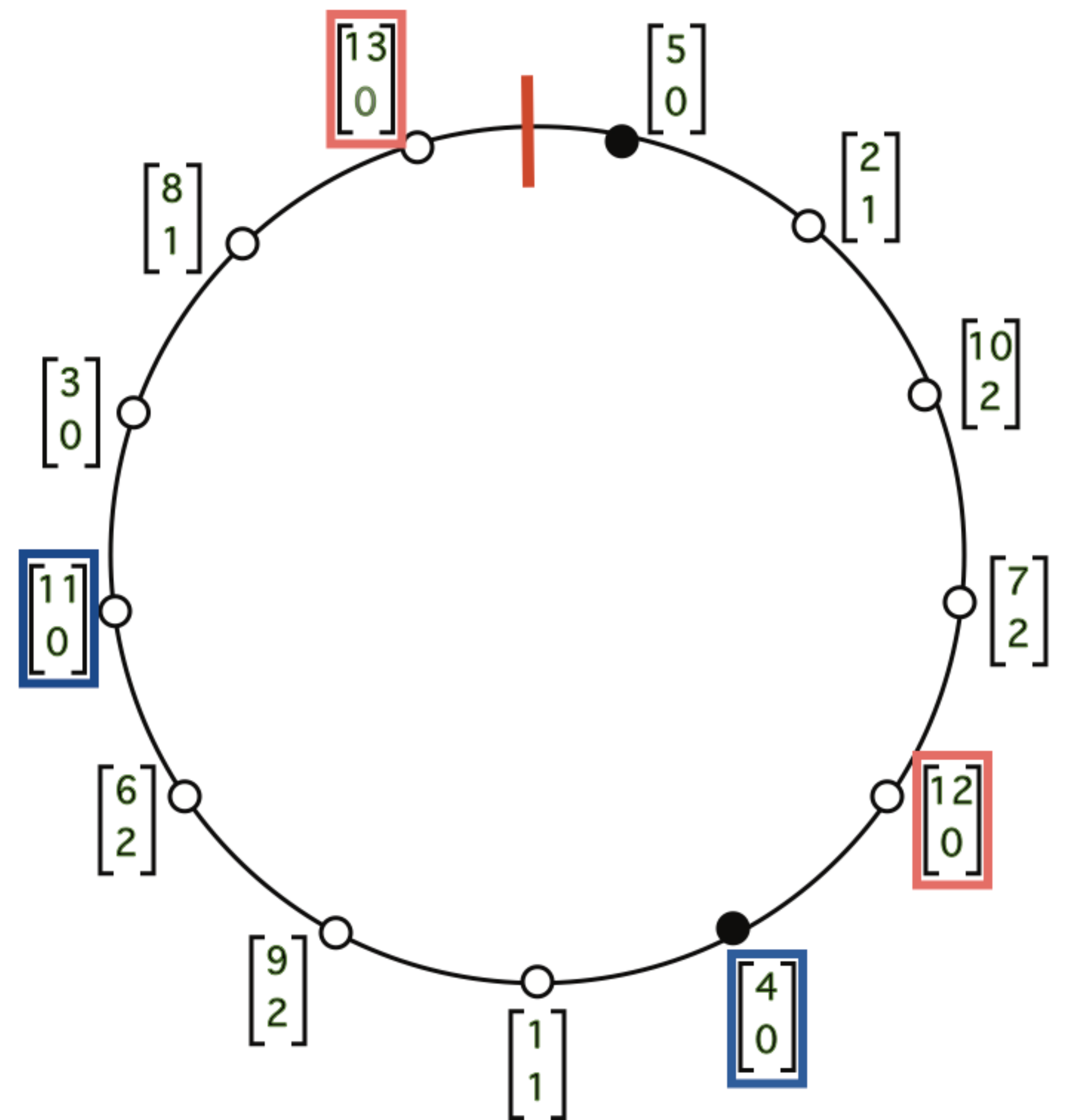}}} 
\ess\ess  \Longrightarrow  \ess\ess 
\vcenter{\hbox{ \includegraphics[width=2.2in]{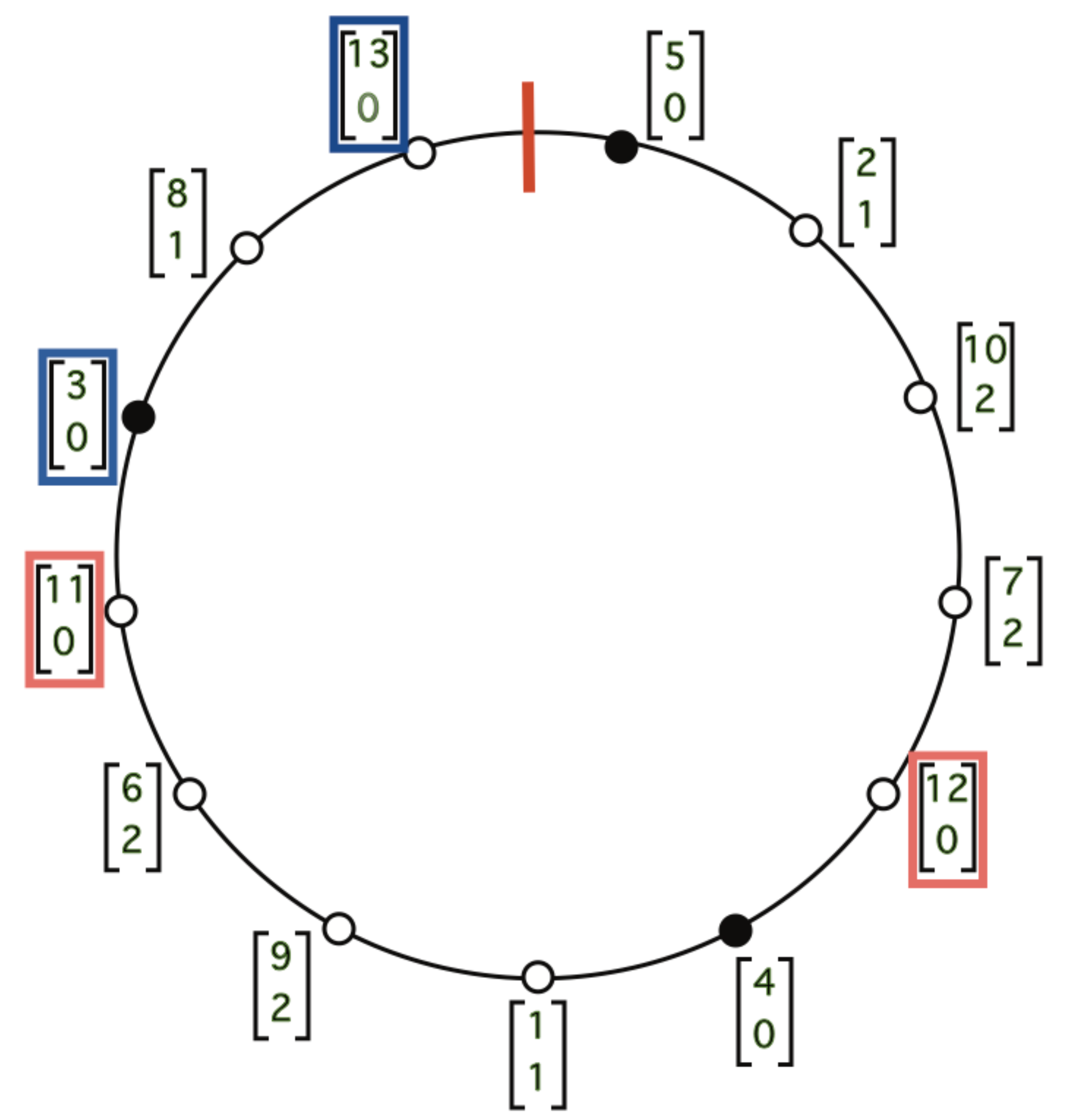}}} 
\label{eq3p27}
\end{equation}
\centerline { (at this point the $c$ value increases to $2$ and we obtain)}
  $$
$$     
\begin{equation}
  \Longrightarrow  \ess\ess
\vcenter{\hbox{ \includegraphics[width=2.2in]{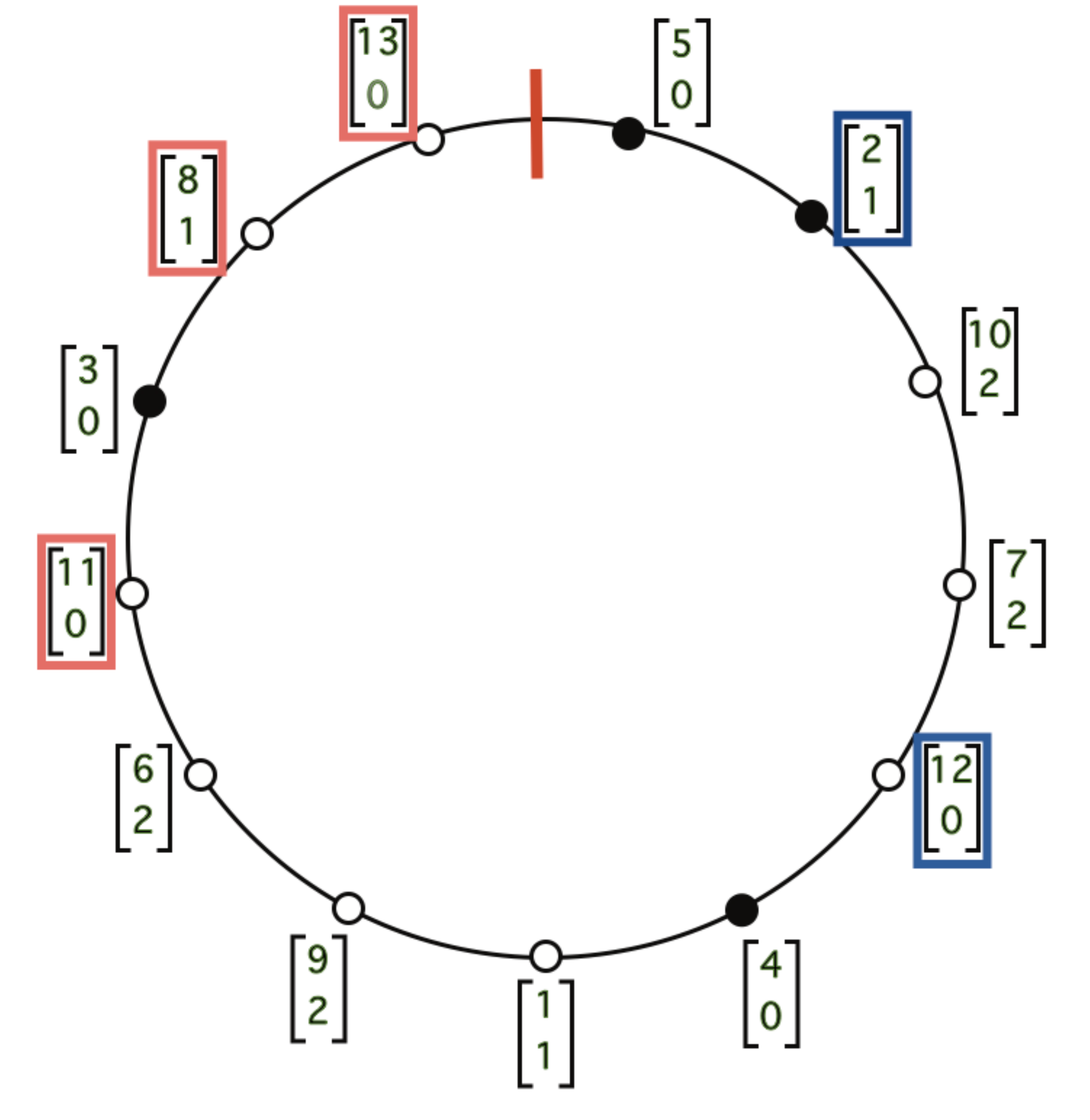}}} 
\ess\ess  \Longrightarrow  \ess\ess 
\vcenter{\hbox{ \includegraphics[width=2.2in]{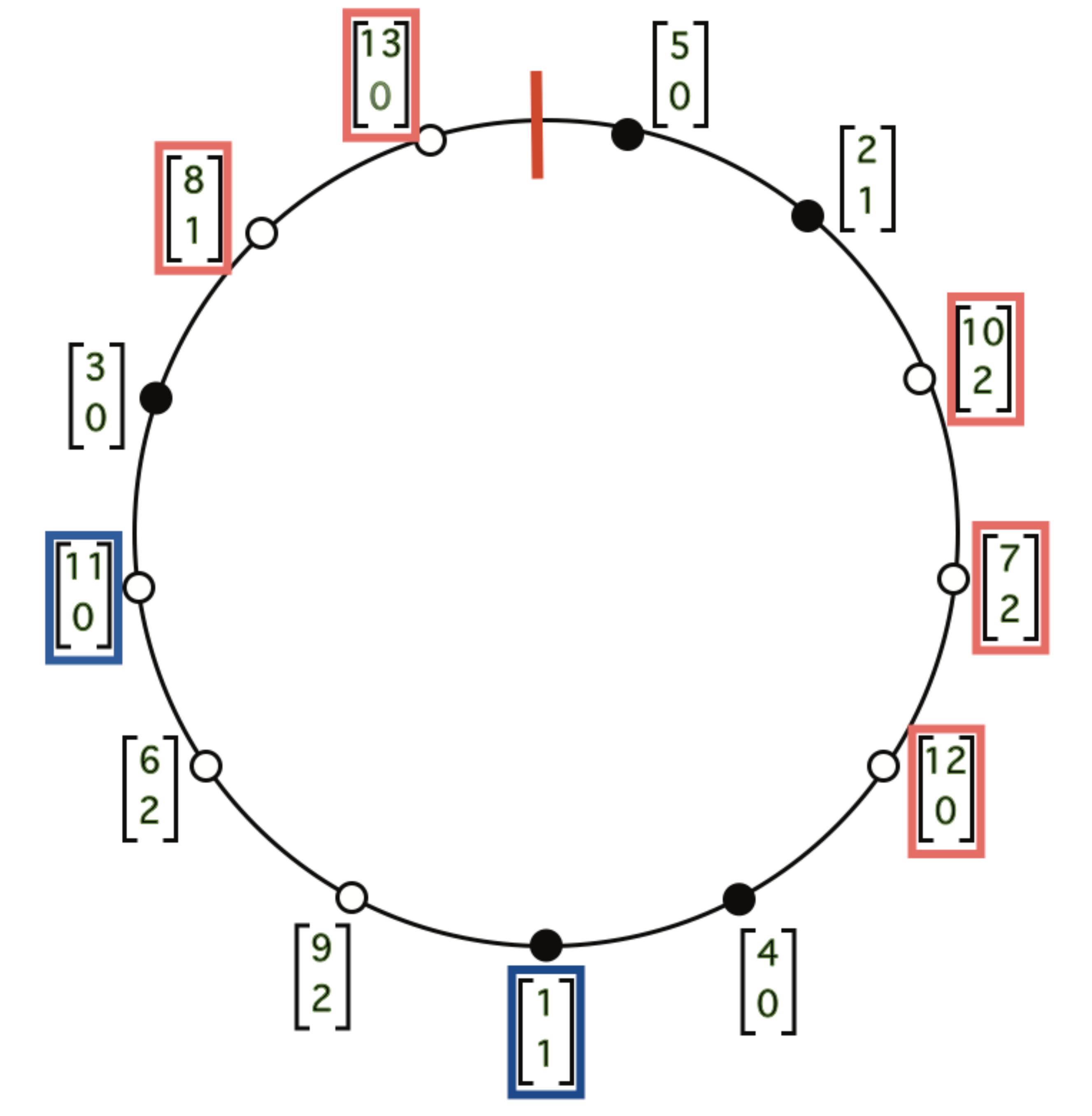}}} 
\label{eq3p27p5}
\end{equation}
    
\begin{center}(thus,   in this case, $ndinv=14$, which is the total number\\
  of lightly boxed  dominos in the previous five  configurations )
\end{center}
 
\begin{remark}
 We will not include a proof of the validity of this  second
 algorithm, since  A. S. Hicks
 and Y. Kim, using  their  discoveries, are able to provide
 in \cite{HicksKim} 
 a much simpler and more revealing validity argument
 than  we can offer with our present tools. Here it should  be sufficient  to
 acknowledge  that  the auxiliary 
 domino sequence resulting from Phase I together
 with the $c$ statistic  constructed in Phase II 
 have ultimately been put to such beautiful use 
 in subsequent work.
\end{remark}

Before  closing we should note that our $ndinv$  may
 have an   extension that can be used in a more general  settings than the present one. To see this, let us recall that
 the $2$ part case of the Shuffle Conjecture, proved by
 J. Haglund in \cite{Hag}, may be stated as follows
\begin{align}
\LL \nabla &e_n \scs h_jh_{n-j} \RR\ses \nonumber\\
&\sum_{PF \in\CAP_n} t^{area(PF)}q^{dinv(PF)}\chi(\sig(PF)\in 12\cdots j\sch j+1\cdots n)
\label{eq3p28}
\end{align}

Now replacing $n$ by $n+1-j$ and $J$ by $j$ in \eqref{eqIIp17},
for $(p_1,p_2,\ldots ,p_k)\models n+1-j$
we get
\begin{align}
\LL \Delta_{h_j} &\BC_{p_1}\BC_{p_2}\cdots \BC_{p_k}\, 1 \scs e_{n+1-j} \RR
\ses \nonumber\\
& \sum_{\multi{PF\in \CAP_{n+1}(k) 
\cr p(big(PF))=(p_1,p_2,\ldots ,p_k)}}
t^{area(PF)}q^{ndinv(PF)}\chi(\sig(PF)\in 12\cdots j\sch j+1\cdots n+1)
\label{eq3p29}
\end{align}
This given,  since is was shown in \cite{HMZ} that we may write
$$
e_{n+1}\ses \sum_{p\models n+1}
\BC_{p_1}\BC_{p_2}\cdots \BC_{p_{l(p)}}1
$$
it follows,  by summing \eqref{eq3p29} over all compositions of $n+1$,
that we also have
\begin{equation}
\LL \Delta_{h_j} e_{n+1-j} \scs e_{n+1-j} \RR
\ses  \sum_{PF\in \CAP_{n+1} }\hskip - .28 in\ ^{(*)} \ssp 
t^{area(PF)}q^{ndinv(PF)}\chi(\sig(PF)\in 12\cdots j\sch j+1\cdots n+1)
\label{eq3p30}
\end{equation}
where the ``$(*)$'' is to signify that the sum is over all parking functions in the $n+1\times n+1$ lattice square which have
 the biggest car $n+1$ in cell $(1,1)$. 
 But it was also shown in \cite{Hag}\ that we have
 $$
 \LL\Delta_{h_j}e_{n+1-j}\scs e_{n+1-j}\RR
 \ses \LL \nabla e_{n} 
\scs h_jh_{n-j} \RR
 $$ 
Thus \eqref{eq3p30} may also be rewritten in the form
\begin{equation}
\LL \nabla e_{n}\scs h_jh_{n-j}\RR
\ses  \sum_{PF\in \CAP_{n+1} }\hskip - .28 in\ ^{(*)} \ssp 
t^{area(PF)}q^{ndinv(PF)}\chi(\sig(PF)\in 12\cdots j\sch j+1\cdots n+1)
\label{eq3p31}
\end{equation}
which gives another parking function interpretation
to this remarkable polynomial. It is natural then to
ask if this kind of result involving the same $ndinv$, or a
suitable extension of it, may give a new parking function interpretation to any of the polynomials occurring 
on the left hand side of \eqref{eqIIp7}. If that were the case
then that would provide an alternate form of the 
Shuffle Conjecture. It is interesting to note that 
computer exploration has led us to conjecture that for
$p=(p_1,p_2,\ldots ,p_k)\models n$
the polynomials
$$
\LL \DD_{h_{J_1}e_{J_2}} \BC_{p_1} \BC_{p_1}\cdots  \BC_{p_1} 1\scs e_n\RR
$$
have non negative integer coefficients. This yields 
us yet  another avenue by which the results of this
paper can be extended. It should be worthwhile 
to pursue these avenues in further investigations 
on the connections between Parking  Functions and
the Theory of Macdonald Polynomials.
\end{section}

\vfill
\newpage


\begin{thebibliography}{Su2}

\bibitem{BG} F. Bergeron and A. M. Garsia, {\it Science fiction and MacdonaldÕs polynomials}, Algebraic methods and q-special functions (Montr«eal, QC, 1996), CRM Proc. Lecture 
Notes, vol. 22, Amer. Math. Soc., Providence, RI, 1999, pp. 1-52. 
\vskip .1in


\bibitem{BGHT} F. Bergeron, A. M. Garsia, M. Haiman, and G. Tesler, {\it Identities and positivity conjectures for some remarkable operators in the theory of symmetric functions}, Methods in
Appl. Anal. 6 (1999), 363-420. 
\vskip .1in

\bibitem{GHag}
A. M. Garsia and J. Haglund, {\it A proof of the q, t-Catalan positivity conjecture}, Discrete Math. 256 (2002), 
677-717. 
\vskip .1in

\bibitem{GHai1}
A. Garsia and M. Haiman, {\it Some Natural bigraded Modules and the 
$q,t$-Kostka coefficients}, The Foata Festschrift. Electronic J. Combin.
\#3 (1996) Res. Paper 24. Approx. 60 p.
\sas

\bibitem{GHai2}
A.M. Garsia and M. Haiman, {\it A remarkable q,t-Catalan sequence and q-Lagrange inversion}, J. Algebraic 
Combin. 5 (1996), no. 3, 191-244. 
\vskip .1in


\bibitem{GHT}
A. Garsia, M. Haiman and G. Tesler, {\it Explicit Plethystic Formulas for the Macdonald q,t-Kostka Coefficients}, S\'eminaire Lotharingien de Combinatoire, B42m (1999), 45 pp.
\vskip .1in

\bibitem{GXZ}
A. M. Garsia, G. Xin and M. Zabrocki, {\it Hall-Littlewood Operators  in  the Theory of Parking Functions and Diagonal Harmonics}, International  Mathematical Research  Notices V. 2011 \# 11.
\vskip .1in

\bibitem{Hag}
J. Haglund, {\it A proof of the q,t-Schr\"oder conjecture}, Internat. Math. Res. Notices 11
(2004), 525-560.
\vskip .1in

\bibitem{Hagbook}
J. Haglund, {\it The q,t-Catalan Numbers and the Space of Diagonal Harmonics}, 
 AMS University Lecture Series,  vol. 41 (2008) pp. 167.
\vskip .1in


\bibitem{HHLRU}
J. Haglund, M. Haiman, N. Loehr, J. B. Remmel, and A. Ulyanov,
{\it  A combinatorial 
formula for the character of the diagonal coinvariants}, Duke J. Math. 126 (2005), 
195-232. 
\vskip .1in

\bibitem{Hai}
M. Haiman, {\it Hilbert schemes, polygraphs, and the Macdonald positivity conjecture}, J.
Amer. Math. Soc. 14 (2001), 941-1006.
\vskip .1in


\bibitem{HMZ}
J. Haglund, J. Morse and M. Zabrocki,  {\it A Compositional refinement of the Shuffle Conjecture} , The Canadian Journal of Math.,
DOI: 10.4153/CJM-2011-078-4,
23 pages.
\vskip .1in

\bibitem{HicksKim} A. S. Hicks  and Y. Kim, {\it
An explicit formula for  the new  ``dinv''  statistic
for compositional  ``2-shuffle'' Parking Functions},
(to appear).
\vskip .1in

\bibitem{Mac}
I. G. Macdonald , {\it Symmetric functions and Hall polynomials}, Oxford Mathematical Monographs,
second ed., Oxford Science Publications, The Clarendon Press Oxford University
Press, New York, 1995

\bibitem{Zab}
M. Zabrocki, UCSD Advancement to Candidacy Lecture Notes, Posted in {\tt http://www.math.ucsd.edu/\~{}garsia/somepapers/}

\end{thebibliography}
\end{document}